\newtheorem{theorem}{Theorem}
\newtheorem{corollary}[theorem]{Corollary}
\newtheorem{lemma}[theorem]{Lemma}
\newtheorem{remark}[theorem]{Remark}
\newenvironment{proof}[1][Proof]{\noindent\textbf{#1.} }{\ \rule{0.5em}{0.5em}}
\begin{document}

\title{Numerical analysis of a finite element formulation of the P2D model for Lithium-ion cells}
\author{R. Bermejo \\
{\small
$$
Dpto. Matem\'{a}tica Aplicada a la Ingenier\'{i}a Industrial
ETSII Universidad Polit\'{e}cnica de Madrid. }\\
[0pt] {\small e-mail: rodolfo.bermejo@upm.es }}
\date{}
\maketitle

\begin{abstract}
The mathematical P2D model is a system of strongly coupled nonlinear
parabolic-elliptic equations that describes the electrodynamics of
lithium-ion batteries. In this paper, we present the numerical
analysis of a finite element-implicit Euler scheme for such a model.
We obtain error estimates for both the spatially semidiscrete and
the fully discrete systems of equations, and establish the existence
and uniqueness of the fully discrete solution.
\end{abstract}

\textit{Keywords:} P2D model, lithium-ion batteries, nonlinear,
parabolic, elliptic, finite elements, error estimates.
\textit{2010:MSC:} 65M60, 35M13, 35Q99.

%

\section{Introduction}

In this paper, we present the numerical analysis of a finite
element-implicit Euler method to calculate the numerical solution of
the so called pseudo-two-dimensional (P2D) model. proposed by J.
Newman and coworkers \cite{Doyle}. This is a mathematical model
based on the electrochemical kinetics and continuun mechanics laws,
which consists of a system of coupled nonlinear parabolic-elliptic
equations to model the physical-chemical phenomena governing the
behavior of lithium ion batteries. The P2D model is very much used
in engineering studies. A good presentation of it can be found in
\cite{NT} and \cite{Plet}. A lithium-ion battery system is composed
of a number of lithium-ion cells. A typical cell consists of three
regions, namely, a porous negative electrode (which plays the role
of anode of the cell in the discharge process) connected to the
negative terminal collector of the battery, a separator that is an
electron insulator allowing the flow of lithium ions between the
anode and the cathode, and a porous positive electrode (which plays
the role of cathode during the discharge process) connected to the
positive terminal, see Fig. 1. We must point out that in the charge
process the negative electrode plays the role of cathode and the
positive electrode is the anode. The electrodes are composite porous
structures of highly packed active lithium particles, typically
Li$_{\mathrm{x}}$C$_{\mathrm{6}}$ in the negative electrode and
metal oxide, such as
Li$_{\mathrm{1-x}}$Mn$_{\mathrm{2}}O_{\mathrm{4}}$, in the positive
electrode, plus a binder and a polymer that act as conductive
agents. Furthermore, the cell is filled with the electrolyte that
occupies the holes left free by the particles and the filler
material. The electrolyte is a lithium salt dissolved in an organic
solvent. In the description of the model it is customary to consider
two phases: the electrolyte phase and the solid phase, the latter is
composed of the solid particles of the electrodes.

\begin{figure}[th]
\begin{center}
\includegraphics[height=10cm]{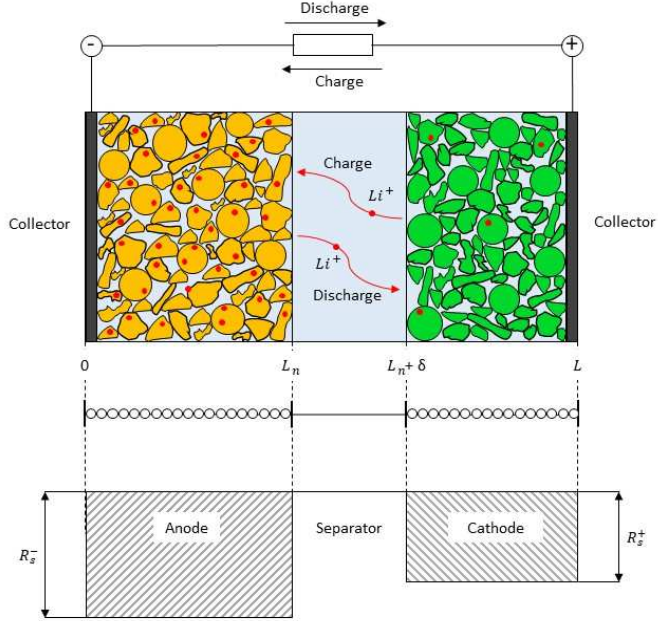}
\end{center}
\caption{\textit{Upper panel}: a cross-section of a cell along the
x-direction. The lithium ions travel from the anode to the cathode
during the discharge process and in the opposite direction during
the charge process. \textit{Middel panel}: the cell model as a
non-denumerable collection of solid spheres plus the separator,
there is one sphere of radius $R_{s}(x)$
at each point ${x}$ of the electrodes. \textit{Lower panel}: the domain $%
D_{3}$ }
\label{figure1}
\end{figure}

The P2D model of a lithium-ion cell considers that the dynamics is only
relevant along the $x$-axis, neglecting what happens along the $y$-axis and $%
z$-axis, because the ratios $\frac{L_{x}}{L_{y}}$ and $\frac{L_{x}}{L_{z}}%
=O(10^{-3})$, $L_{x}$,\ $L_{y}$ and $L_{z}$ being the characteristic
length scales along the corresponding axes. The main modeling
assumptions are the following : (1) The active particles of the
electrodes are assumed to be spheres of radius $R_{s}$ which may be
different in each electrode. (2)\ Side reactions are neglected and
no gas phase is present. (3) The transport of lithium ions is due to
diffusion and migration in the electrolyte solution, and in the
solid particles the atoms of lithium move between vacancies in the
crystalline structure of the particles due to local diffusion in
concentration. By longitudinal and latitudinal symmetry
considerations, the diffusion in the active particles is only in the
radial direction. (4) The electrochemical reaction of lithium
insertion and extraction processes follows the Buttler-Volmer law.
(5)\ The effective transport coefficients are calculated by the
Bruggeman relation, i.e., $\mu ^{eff}=\mu \varepsilon ^{p}$ (p=1.5),
where $\mu $ is a generic transport coefficient and $\varepsilon $
is the component volume fraction of the material in the composite
electrodes and separator.To formulate the
equations of the model we distinguish the following domains.%
\begin{equation*}
\left\{
\begin{array}{l}
D_{\mathrm{n}}=(0,L_{n}),\ D_{\mathrm{s}}=(L_{n},L_{n}+\delta
)\text{,\ }D_{\mathrm{p}}=(L_{n}+\delta ,L),\ L_{p}:=L-(L_{n}+\delta) , \\
\\
D_{1}=(0,L),\ D_{2}=D_{\mathrm{n}}\cup D_{\mathrm{p}}\text{ \
\textrm{and\
\ }}D_{3}=\cup _{x\in D_{2}}\left\{ x\right\} \times (0,R_{\mathrm{s}}(x)),%
\end{array}%
\right.
\end{equation*}%
where $D_{\mathrm{n}}$, $D_{\mathrm{s}}$ and $D_{\mathrm{p}}$ denote
the domains of the negative electrode, the separator and the
positive electrode respectively. Notice that $D_{1}$ represents the
cell domain, $D_{2}$ is a domain that is the union of two disjoint
domains corresponding to the electrodes, and $D_{3}$ is in a certain
sense a modeling space accounting for the spherical balls of radius
$R_{\mathrm{s}}(x)$ that represent at each
$x\in D_{2}$ the solid active particles, such that when $x\in D_{\mathrm{n}}$%
, $R_{\mathrm{s}}(x)=R_{\mathrm{s}}^{-}$, and when $x\in $
$D_{\mathrm{p}}$, $R_{\mathrm{s}}(x)=R_{\mathrm{s}}^{+}$. The
variables of the model are the following:\ for the electrolyte
phase, the molar concentration of lithium ions $u(x,t)$, and the
electric potential $\phi _{1}(x,t)$, $x\in D_{1}$; for the solid
phase, the molar concentration of lithium $v(x;r,t)$, $x\in D_{2}$
and $r\in \left( 0;R_{\mathrm{s}}(x)\right) $, and the electric
potential $\phi _{2}(x,t)$, $x\in D_{2}$. Another important variable
is the
so called molar flux of lithium ions exiting the solid particles, $%
J(x,u,v,\phi _{1},\phi _{2},U)/F$, $F$ being the Faraday constant.
The mathematical expression of $J$ is given by the Buttler-Volmer
law, see (\ref{Ja}).

Many numerical models to integrate the P2D model have been proposed.
The first one is the Dualfoil model developed by J. Newman and his
collaborators \cite{dual}, this is a model that uses second order
finite differences for space discretization of the differential
operators combined with the first order backward Euler time stepping
scheme; the Dualfoil model is distributed as free software, which is
being updated through time. Later on, authors such as \cite{ML} and
\cite{SW}, just to cite a few, have developed their own codes by
using second order finite volume for space discretizations combined
with the first order in time implicit Euler scheme for time
discretization. Other authors make the numerical simulations with
COMSOL multi-physics package that uses finite elements for space
discretizations of the equations, the resulting system of nonlinear
differential equations is integrated by different time stepping
schemes, in particular, conventional DAE solvers, such as DASK
\cite{NOR}. New numerical models have recently been proposed to
improve the computational efficiency, to this respect, we mention
the operator splitting technique of \cite{Farkas}, the orthogonal
collocation method for space discretization combined with the first
order implicit Euler scheme for time discretization of \cite{KZ},
and the implicit-explicit Runge-Kutta-Chebyshev finite element
method of \cite{BG}. Despite the activity in the development of
numerical methods no rigorous numerical analysis of such methods has
been published so far; so, to the best of our knowledge, this is the
first paper presenting the analysis of a numerical method developed
to integrate the P2D model.

The layout of the paper is the following. In Section 2 we introduce
the governing equations of the P2D model together with the
functional framework needed for the numerical analysis. Section 3 is
devoted to the semidiscrete space discretization of the model in a
finite element framework. The error analysis of the semi-discrete
solution is performed in Section 4. Since this analysis is long,
then we have split the section into three subsections in order to
make more palatable its presentation. Subsection 4.1 is a collection
of auxiliary results; subsections 4.2 and 4.3 deal with the error
estimates for the potentials and the concentrations, respectively.
The fully discrete model and its error analysis is presented in
Section 5, which is also split into subsections. Since the fully
discrete model is a nonlinear system of elliptic and fully discrete
parabolic equations at each time instant $t_{n}$, then we have also
studied the existence and uniqueness of the solution by applying
Minty-Browder theorem \cite{Zeid} for the elliptic equations, and
Brower\'{}s fixed point theorem for the parabolic equations.

\section{The governing equations of the isothermal P2D model}

We consider the governing equations of the isothermal P2D model for
the variables $u(x,t)$, $v(x;r,t)$, $\phi_{1}(x,t)$ and
$\phi_{2}(x,t)$ presented in Chapters 3 and 4 of \cite{Plet}.
However, to facilitate both the formulation of the numerical method
to integrate these equations and its numerical analysis, it is
convenient to make the changes of variable introduced in \cite{Kro}
and \cite {WXZ}. Thus, in order to make homogeneous the Neumann type
boundary conditions for the potential $\phi_{2}$ one considers the
function $H(x,t)$ given by the expression
\begin{equation*}
H(x,t)=\left\{
\begin{array}{l}
-\displaystyle\frac{(x-L_{n})^{2}I(t)}{2\sigma L_{n}A},\ \ x\in D_{\mathrm{n}%
}, \\
\\
\displaystyle\frac{(x-(L_{n}+\delta ))^{2}I(t)}{2\sigma L_{p}A},\ \ x\in D_{%
\mathrm{p}},%
\end{array}%
\right.
\end{equation*}%
where $I(t)$ denotes the applied current, $A$ is the area of the
plate and $\sigma $ is a positive coefficient defined below, and
replace $\phi _{2}(x,t)$ by $\phi _{2}(x,t)+H(x,t)$; likewise, we
replace the potential $\phi _{1}(x,t)$ by $\phi _{1}(x,t)+\alpha \ln
u(x,t)$, with $\alpha =\alpha(u) =\displaystyle\frac{2RT\kappa(u)
}{F}(t_{+}^{0}-1)$, where $\kappa(u)>0$ denotes the effective
electrolyte phase ionic conductivity; $t_{+}^{0}>0$ is the so called
transfer number, which is assumed to be constant; $R$ is the
universal gas constant and $T$ denotes the absolute temperature
inside the cell, which is assumed to be constant in the isothermal
model; this latter change of variable for $\phi_{1}(x,t)$ simplifies
the expression of the equation for the potential of the electrolyte
phase written in Chapter 4 of \cite{Plet}, making it more manageable
from a computational viewpoint. Another important variable, as we
mentioned above, is the reaction current density $J$. The reaction
rate is coupled to phase
potentials by the Buttler-Volmer kinetic expression.%
\begin{equation}  \label{Ja}
J=J(x,u,v_{s},T,\eta )=\left\{
\begin{array}{l}
a_{s}i_{0}\displaystyle\left( \exp \frac{\alpha _{a}F}{RT}\left( \eta -\frac{%
R_{SEI}}{a_{s}}J\right) \right. \\
\\
-\displaystyle\left. \exp \frac{-\alpha _{c}F}{RT}\left( \eta -\frac{R_{SEI}%
}{a_{s}}J\right) \right) \ \ \text{if \ }x\in D_{n}
\cup D_{p} , \\
\\
0\ \ \text{if \ }x\in D_{s}.%
\end{array}%
\right.
\end{equation}%
In this expression, $v_{s}=v(x;R_{s}(x),t)$ denotes the lithium
concentration on the surface of the active particles; $a_{s}=a_{s}(x)=\displaystyle%
\frac{3\varepsilon _{s}(x)}{R_{s}(x)}$ is the active area per
electrode unit volume; $\varepsilon _{s}(x)$ denotes the volume
fraction of the active material, $\varepsilon _{s}(x)=\varepsilon
_{s}^{-}>0$ for $x \in D_{n}$ and $\varepsilon _{s}(x)=\varepsilon
_{s}^{+}>0$ for $x \in D_{p}$; $\alpha _{a}\in (0,1)$ and $\alpha
_{c}\in (0,1)$ are anodic and cathodic transfer coefficients for an
electron reaction; $R_{SEI}$ represents the solid interface
resistance, usually, $R_{SEI}=0$ in the engineering literature
unless the model also considers aging phenomena of the battery, so
in this paper we take $R_{SEI}=0$.
\begin{equation*}
\eta =\eta (x,\phi _{1},\phi _{2},U)=\left\{
\begin{array}{l}
\phi _{2}(x,t)-\phi _{1}(x,t)-U(x,v_{s})\text{ if\ }x\in
D_{n} \cup D_{p}, \\
\\
0\text{ \textrm{if} }x\in D_{s},%
\end{array}%
\right.
\end{equation*}%
where $U$ stands for the equilibrium potential at the solid
electrolyte interface, which is assumed to be known. $i_{0}$ is the
exchange current density, i.e.,
\begin{equation} \label{i0}
i_{0}=i_{0}(u,v_{s})=ku^{\alpha _{a}}(v_{\max }-v_{s})^{\alpha
_{a}}v_{s}^{\alpha _{c}}\text{ if }x\in D_{n} \cup L_{n},
\end{equation}
here, $v_{\max }$ is the maximum concentration of lithium in the
solid phase, which may have different values in the positive and
negative electrodes, so
\begin{equation*}
v_{\max }=v_{\mathrm{\max }}(x)=\left\{
\begin{array}{l}
v_{\mathrm{\max }}^{-}\text{ \textrm{if} }x\in D_{n}, \\
v_{\mathrm{\max }}^{+}\text{ \textrm{if} }x\in D_{p},%
\end{array}%
\right.
\end{equation*}%
the coefficient $k$ represents the kinetic rate constant,
\begin{equation*}
k=k(x)=\left\{
\begin{array}{l}
k^{-}\text{ \textrm{if} }x\in D_{n}, \\
k^{+}\text{ \textrm{if} }x\in D_{p}.%
\end{array}%
\right.
\end{equation*}
Considering the above mentioned changes of variable and taking the
transfer coefficients $\alpha _{a}$ and $\alpha _{c}$ equal to
$0.5$, as many engineering papers do, the expression for the
reaction current that we use in the paper is
\begin{equation}
J=J(x,u,v_{s},\phi _{1},\phi _{2},\overline{U})=\left\{
\begin{array}{l}
a_{2}(x)i_{0}\sinh \left( \beta \eta \right) ,\ \forall x\in D_{2},
\\
\\
0\ \mathrm{for\ }x\notin D_{2},%
\end{array}%
\right.   \label{F}
\end{equation}%
where $\beta = \displaystyle\frac{F}{2RT}$; $a_{2}(x)=3\varepsilon
_{s}(x)/R_{s}(x)$; and
\begin{equation}
\eta =\phi _{2}-\phi _{1}-\alpha \ln u-\overline{U},  \label{F1}
\end{equation}%
$\overline{U}=\overline{U}(x,t,v_{s})=U(v_{s})-H(x,t)$. Noting that
the boundaries $\partial D_{1}$ and $\partial D_{2}$ of the
domains $D_{1}$ and $D_{2}$ are $\partial D_{1}:=\left\{ 0,L\right\} $ and $%
\partial D_{2}:=\left\{ 0,L_{n},L_{n}+\delta ,L\right\} $, we formulate the
equations of the model as follows.

\bigskip

\textit{Concentration }$u(x,t)$\textit{\ in the electrolyte phase}.

\begin{equation}
\left\{
\begin{array}{l}
\displaystyle\frac{\partial u}{\partial
t}-\displaystyle\frac{\partial}{\partial x}(k_{1}\frac{
\partial u}{\partial x})=a_{1}(x)J\text{ \ \textrm{in} \ }D_{1}\times
(0,T_{\text{\textrm{end}}}), \\
\\
\displaystyle\frac{\partial u}{\partial x}\mid _{\partial D_{1}\times (0,T_{%
\text{\textrm{end}}})}=0,\ u(x,0)=u^{0}(x)\ \text{\textrm{in} }D_{1}.%
\end{array}%
\right.  \label{rf1}
\end{equation}

\textit{Concentration }$v(x;\,r,t)$\textit{\ in the solid phase}. For almost
every $x\in D_{2}$,

\begin{equation}
\left\{
\begin{array}{l}
\displaystyle\frac{\partial v}{\partial t}-\frac{k_{2}}{r^{2}}\frac{%
\partial }{\partial r}\left( r^{2}\frac{\partial v}{\partial r}\right) v=0%
\text{\ \ \textrm{in}\ \ }D_{3}\times (0,T_{\text{\textrm{end}}}), \\
\\
\displaystyle\frac{\partial v}{\partial r}\mid _{r=0}=0,\ k_{\mathrm{2}}%
\displaystyle\frac{\partial v}{\partial r}\mid _{r=R_{\mathrm{s}}(x)}=%
\displaystyle\frac{-J}{a_{2}(x)F},\ v(x;r,0)=v^{0}(x;r)\ \
\text{\textrm{in}
\ }D_{3}.%
\end{array}%
\right.  \label{rf2}
\end{equation}

\textit{Electrolyte potential} $\phi _{\mathrm{1}}(x,t)$.

\begin{equation}
\left\{
\begin{array}{l}
-\displaystyle\frac{\partial }{\partial x}(\kappa (u)\frac{\partial \phi _{%
\mathrm{1}}}{\partial x})=J\text{ \ \textrm{in} \ }D_{1}\times (0,T_{\text{%
end}}), \\
\\
\displaystyle\frac{\partial \phi _{\mathrm{1}}}{\partial x}\mid _{\partial
D_{1}\times (0,T_{\text{end}})}=0, \\
\\
\int_{D_{1}}\phi _{\mathrm{1}}(x,t)dx=0.%
\end{array}%
\right.  \label{rf3}
\end{equation}

\textit{Solid phase potential} $\phi _{\mathrm{2}}(x,t)$.

\begin{equation}
\left\{
\begin{array}{l}
\displaystyle\frac{\partial }{\partial x}\left( \sigma \frac{\partial \phi _{%
\mathrm{2}}}{\partial x}\right) =J+g\text{ \ \textrm{in} \ }D_{2}\times
(0,T_{\text{\textrm{end}}}), \\
\\
\sigma \displaystyle\frac{\partial \phi _{\mathrm{2}}}{\partial x}\mid
_{\partial D_{2}\times (0,T_{\text{\textrm{end}}})}=0, \\
\\
g(x,t)=\left\{
\begin{array}{r}
\displaystyle\frac{-I(t)}{L_{n}A},\ x\in D_{\textrm{n}}, \\
\\
\displaystyle\frac{I(t)}{L_{p}A},\ x\in D_{\textrm{p}},%
\end{array}%
\right.%
\end{array}%
\right.  \label{rf4}
\end{equation}%
where $a_{1}(x)=\displaystyle\frac{1-t_{+}^{0}}{3\varepsilon
_s(x)F}$. In these equations, $k_{1}(x)>0$ and $k_{2}(x)>0$
represent effective diffusion coefficients in the electrolyte and
solid phases respectively, and $\sigma (x) $ denotes the effective
electric conductivity in the solid phase. The functions $a_{1}(x),\
a_{2}(x)\ $, $\sigma (x)$  and $k_{2}(x)$ are considered to be
piecewise positive constant functions in the sense that they have
different constant values in the negative electrode, separator and
positive electrode.

We also have to consider that for $t\in (0,T_{\text{\textrm{end}}})$, $%
J(x,u,v_{\mathrm{s}},\phi _{\mathrm{1}},\phi _{\mathrm{2}})$
satisfies the
algebraic conditions%
\begin{equation}
\left\{
\begin{array}{l}
\displaystyle\int_{D_{1}}Jdx=\int_{D_{2}}Jdx=0, \\
\\
\displaystyle\int_{D_{\textrm{n}}}Jdx=I(t),\ \int_{D_{\textrm{p}}}Jdx=-I(t).%
\end{array}%
\right.  \label{rf5}
\end{equation}
Notice that the first row of algebraic conditions follow directly
from (\ref{rf3}) and the definition of $J$, whereas the second row
conditions translates the boundary conditions of the solid phase
potential. It is worth remarking the conservative properties enjoyed
by both $u(x,t)$
and $v(x;r,t)$; namely, for all $t\in \left[ 0,T_{\mathrm{end}}\right] $%
\begin{equation*}
\int_{D_{1}}u(x,t)dx=\int_{D_{1}}u(x,0)dx,
\end{equation*}%
and%
\begin{equation*}
\int_{D_{2}}\int_{0}^{R_{s}(x)}v(x;r,t)r^{2}drdx=\int_{D_{2}}%
\int_{0}^{R_{s}(x)}v(x;r,0)r^{2}drdx.
\end{equation*}%
These relations are readily obtained by integrating (\ref{rf1}) and (\ref%
{rf2}) and using the corresponding boundary conditions. Moreover, it
can be shown \cite{Kro} that for $t\geq 0$ and $x\in D_{1}$,
$u(x,t)>0$, similarly, for $(x,r)\in D_{3}$, $0<v(x;r,t)<v_{\max }$.

Let $D$ denote a generic open bounded domain in $\mathbb{R}$;
hereafter, the closure of a domain $D$ is denoted $\overline{D}$.
The functional spaces that we use in this paper are the following.
The Sobolev
spaces $H^{m}(D)$, $m$ being a nonnegative integer, when $m=0$, $%
H^{0}(D):=L^{2}(D)$; the Lebesgue spaces $L^{p}(D),\ 1\leq p\leq \infty $;
the spaces of measurable radial functions \cite{Diaz}%
\begin{equation*}
H_{r}^{q}(0,R):=\left\{ v:(0,R)\rightarrow \mathbb{R}\text{:\textrm{\ }}%
\left\Vert v\right\Vert
_{H_{r}^{q}(0,R)}^{2}=\sum_{j=0}^{q}\int_{0}^{R}\left( \frac{d^{j}v}{dr^{j}}%
\right) ^{2}r^{2}dr<\infty \right\} ,
\end{equation*}%
$q$ being a nonnegative integer, when $q=0$ we set $%
H_{r}^{0}(0,R):=L_{r}^{2}(0,R)$; also, for $p$ being a nonnegative
integer,
the normed spaces of measurable functions%
\begin{equation*}
H^{p}(D_{2};H_{r}^{q}(0,R_{s}(\cdot ))):=\left\{ v:D_{2}\rightarrow
H_{r}^{q}(0,R_{s}(\cdot )):\left\Vert v\right\Vert
_{H^{p}(D_{2};H_{r}^{q}(0,R_{s}(\cdot )))}<\infty \right\} ,
\end{equation*}%
where $\left\Vert v\right\Vert _{H^{p}(D_{2};H_{r}^{q}(0,R_{s}(\cdot
)))}^{2}=$ $\displaystyle\sum_{j=0}^{p}\int_{D_{2}}\left\Vert \frac{%
\partial ^{j}v(x;\cdot )}{\partial x^{j}}\right\Vert
_{H_{r}^{q}(0,R_{s}(x))}^{2}dx$; and the spaces%
\begin{equation*}
H_{\ \ r}^{p,q}(D_{2}\times (0,R_{s}(\cdot
)))=H^{p}(D_{2},L_{r}^{2}(0,R_{s}(\cdot )))\cap
L^{2}(D_{2},H_{r}^{q}(0,R_{s}(\cdot )))
\end{equation*}%
with norm%
\begin{equation*}
\left\Vert u\right\Vert _{H_{\ \ r}^{p,q}(D_{2}\times (0,R_{s}(\cdot
)))}^{2}:=\left\Vert u\right\Vert
_{H^{p}(D_{2},L_{r}^{2}(0,R_{s}(\cdot )))}^{2}+\left\Vert
u\right\Vert _{L^{2}(D_{2},H_{r}^{q}(0,R_{s}(\cdot )))}^{2}.
\end{equation*}%
Notice that $H_{\ \ r}^{0,0}(D_{2}\times (0,R_{s}(\cdot
)))=L^{2}(D_{2},L_{r}^{2}(0,R_{s}(\cdot ))).$

Since the variables of the model depend on time, then we also introduce the
normed spaces $L^{p}(0,t;X)$, where $1\leq p\leq \infty $, and $(X,\
\left\Vert \cdot \right\Vert _{X})$ being a real Banach space.%
\begin{equation*}
L^{p}(0,t;X):=\left\{ v:(0,t)\rightarrow X\text{ \textrm{strongly measurable
such that\ \ }}\left\Vert v\right\Vert _{L^{p}(0,t;X)}<\infty \right\} ,
\end{equation*}%
with $\left\Vert v\right\Vert _{L^{p}(0,t;X)}=\displaystyle\left(
\int_{0}^{t}\left\Vert v(\tau )\right\Vert _{X}^{p}d\tau \right)
^{1/p}$ when $1\leq p<\infty $, and for $p=\infty $, $\left\Vert
v\right\Vert _{L^{\infty }(0,t;X)}=ess\ \sup_{0<\tau <t}\left\Vert
v(\tau )\right\Vert _{X}$. Other spaces used in the paper are
$W(D_{1}):=\left\{ v\in
H^{1}(D_{1}):\displaystyle\int_{D_{1}}vdx=0\right\} $, which is a
closed subspace of $H^{1}(D_{1})$ where the potential $\phi
_{1}(x,t)$ is calculated, and the space of $q$ times continuously
differentiable functions defined on $D$, $C^{q}(D)$, when $q=0$,
$C^{0}(D):=C(D)$.

Next, we introduce the following regularity assumptions on the data and the
molar flux $J$ \cite{Diaz}.

\textbf{A1)}%
\begin{equation*}
u^{0}\in H^{1}(D),\ u^{0}>0,\ v^{0}\in C(\overline{D}_{3}),\ 0<v^{0}<v_{\max
},\ I(t)\in C_{part}([0,T^{\ast }]),0<T_{end}\leq T^{\ast }<\infty ,
\end{equation*}%
where $C_{part}$ denotes the set of piecewise continuous functions, i.e.,%
\begin{equation*}
C_{\mathrm{part}}([a,b])=\left\{ g:[a,b]\rightarrow \mathbb{R}:\exists
a=t_{0}<t_{1}<\cdots t_{N}=b\text{ such that }g\in C\left(
[t_{i-1},t_{i}]\right) \right\} .
\end{equation*}

\textbf{A2)} For $0<a<c<+\infty $, $k_{0}$ and $\sigma _{0}$
positive constants,
\begin{equation*}
k_{1}\in L^{\infty }(D_{1}),\ k_{1}\geq k_{0}>0,\ k_{2}\in \lbrack
a,c],\ \kappa \in C^{2}\left( (0,+\infty )\right) ,\ \sigma \in
L^{\infty }(D_{2}),\ \kappa \geq \kappa _{0}>0,\ \sigma \geq \sigma
_{0}>0.
\end{equation*}%
\ \ \ \ \   Moreover, $\underline{k_{1}}:=\inf_{x\in
{D_{1}}}{k_{1}(x)}$ and $\underline{k_{2}}:=\min_{x\in
{D_{2}}}{k_{2}(x)}$.

\bigskip

\textbf{A3)} For all $(x,u,v_{s},\eta )\in D_{2}\times (0,+\infty )\times
(0,v_{s,\mathrm{\max }})\times \mathbb{R}$,%
\begin{equation*}
J\in C^{2}(D_{2}\times (0,+\infty )\times (0,v_{s,\mathrm{\max }})\times
\mathbb{R}),\ \displaystyle\frac{\partial J}{\partial \eta }>0.
\end{equation*}

A weak formulation to (\ref{rf1})-(\ref{rf4}) is the following. Find

\begin{equation*}
\left\{
\begin{array}{l}
u\in {L}^{2}\left( 0,T_{\mathrm{end}};\ H^{1}(D_{1}\right) ),\ \displaystyle%
\frac{du}{dt}\in L^{2}(0,T_{\mathrm{end}};\ H^{1}(D_{1}))^{\ast }, \\
\\
v{\in L}^{2}\left( 0,T_{\mathrm{end}};L^{2}(D_{2,}\
H_{r}^{1}(0,R_{s}(\cdot
))\right) ),\ \displaystyle\frac{dv}{dt}\in {L}^{2}\left( 0,T_{\mathrm{end}%
};L^{2}(D_{2,}\ H_{r}^{1}(0,R_{s}(\cdot ))\right) )^{\ast }, \\
\\
\phi _{\mathrm{1}}\in L^{2}(0,T_{\mathrm{end}};\ W(D_{1}))\text{
\textrm{and}
}\phi _{\mathrm{2}}\in L^{2}(0,T_{\mathrm{end}};\ H^{1}(D_{2})),%
\end{array}%
\right.
\end{equation*}%
such that%
\begin{equation}
\int_{D_{1}}\frac{\partial {u}}{\partial {t}}wdx+\int_{D_{1}}k_{1}\frac{%
\partial u}{\partial x}\frac{dw}{dx}dx=\int_{D_{1}}a_{1}Jwdx\ \forall w\in
H^{1}(D_{1});  \label{w1}
\end{equation}%
for a.e.$x\in D_{2}$ and for all $w\in H_{r}^{1}(0,R_{s}(x))$
radially symmetric%
\begin{equation}
\int_{0}^{R_{s}(x)}\frac{\partial {v}}{\partial {t}}wr^{2}dr+%
\int_{0}^{R_{s}(x)}k_{2}\frac{\partial v}{\partial r}\frac{\partial w}{%
\partial r}r^{2}dr=\frac{-R_{\mathrm{s}}^{2}(x)Jw(R_{s}(x))}{a_{2}(x)F};  \label{w2}
\end{equation}

\begin{equation}
\int_{D_{1}}\kappa (u)\frac{\partial \phi _{1}}{\partial x}\frac{dw}{dx}%
dx=\int_{D_{1}}Jwdx\text{ \ }\forall w\in H^{1}(D_{1});  \label{w3}
\end{equation}%
and%
\begin{equation}
\int_{D_{2}}\sigma \frac{\partial \phi _{2}}{\partial x}\frac{dw}{dx}%
dx=-\int_{D_{2}}\left( J+g\right) wdx,\forall w\in H^{1}(D_{2}),
\label{w4}
\end{equation}%
where $L^{2}(0,T_{\mathrm{end}};\ H^{1}(D_{1}))^{\ast }$ and
${L}^{2}\left( 0,T_{\mathrm{end}};L^{2}(D_{2,}\
H_{r}^{1}(0,R_{s}(\cdot ))\right) )^{\ast }$ denote the respective
dual spaces of\ $L^{2}(0,T_{\mathrm{end}};\ H^{1}(D_{1}))$ and
${L}^{2}\left( 0,T_{\mathrm{end}};L^{2}(D_{2,}\
H_{r}^{1}(0,R_{s}(\cdot ))\right) )$.
\begin{remark}
\label{remark1} Following the arguments of \cite{Diaz}, where its
non-isothermal P2D model includes an additional time dependent non
linear ordinary equation for the bulk temperature $T(t)$, one can
formulate an alternative definition
of the weak solution to (\ref{rf1}%
)-(\ref{rf4}) based on its Definition 2.7 and prove, under the
assumptions \textbf{A1}-\textbf{A3} and for a
partition $t_{0}<t_{1}<\cdots <t_{N}$ of $[0,T_{%
\mathrm{end}}]$, $T_{\mathrm{end}}$ being small enough, that there
is a unique
weak solution $\left( u,v,\phi _{1},\phi _{2}\right) $ in each interval $%
[t_{n},t_{n+1}]$, such that $\left( u,v,\phi _{1},\phi _{2}\right)
\in C([t_{n},t_{n+1}];K_{Z})$, where $K_{Z}:=H^{1}(D_{1})\times
L^{2}(D_{2,}\ H_{r}^{1}(0,R_{s}(\cdot )))\times W(D_{1})\times
H^{1}(D_{2})$. $\left( u(t_{n}),v(t_{n})\right)$ being the initial
condition in such an interval. Also, Kr$\ddot{\text{o}}$ner
\cite{Kro} proves a local existence and uniqueness theorem for the
weak solution of the isothermal P2D model under less general
assumptions than in \cite{Diaz}.
\end{remark}

\section{The semidiscrete finite element formulation of the isothermal P2D
model}

We use $H^{1}$-conforming linear finite elements ($P_{1}-$finite
elements) for the space approximation of the variables $u(x,t)$,
$\phi _{\mathrm{1}}(x,t)$ and $\phi _{\mathrm{2}}(x,t)$; however,
$v(x;r,t)$ is approximated by nonconforming $P_{0}-$finite elements
in the $x-$coordinate and $H^{1}$-conforming $P_{1}-$finite elements
in the $r- $coordinate. The family of meshes $D_{1h}$ constructed on
the domain $D_{1}$ includes the points $x=0$, $x=L_{n}$,
$x=L_{n}+\delta $ and $x=L$ as mesh points; since these points are
also boundary points of $D_{2}$, then they are also considered as
mesh points in the family of meshes $D_{2h}$. Figure 2 illustrates
the families of meshes that we are going to describe next. Noting
that $D_{2}\subset D_{1}$, we choose the family of meshes $D_{2h}$
as a subset of $D_{1h}$. Let $NE_{1}$ and $%
NE_{2}$ be the number of elements of $D_{1h}$ and $D_{2h}$
respectively, and let $M_{1}$ and $M_{2}$ be the number of mesh
points of such meshes, then, for $i=1,2$, we have that
\begin{equation*}
D_{ih}=\left\{ e_{m}\right\} _{m=1}^{NE_{i}}\ \text{\textrm{and}}\ \overline{%
D}_{i}=\cup _{m=1}^{NE_{i}}e_{m},
\end{equation*}%
where the $m$th element, $e_{m}:=\left\{ x:x_{1}^{m}\leq x\leq x_{2}^{m}\right\} $, and $%
h_{m}:=x_{2}^{m}-x_{1}^{m}$ is the length of the element $e_{m}$;
the points $\ x_{1}^{m}$ $x_{2}^{m}$ are denoted element nodes. We
set $h=\max_{m}h_{m}$, and $ \gamma =h^{-1}\min_{m}h_{m}$. The
parameter $\gamma $ is a measure of the uniformity of the meshes.
The collection of all the element nodes defines
the set of nodes, $\left\{ x_{l}\right\} _{l=1}^{M_{i}}$, of the mesh $%
D_{ih} $. To construct the family of meshes $\widehat{D}_{3h\Delta
r}$ on $D_{3}$, we recall that $D_{3}=\cup _{x\in D_{2}}\left\{
x\right\} \times (0,R_{s}(x))$, where $R_{s}(x)$ is the radius of
the solid spherical particle associated with the point $\{x\}$.
Thus, for each mesh point $\left\{ x_{l}\right\} \in D_{2h}$ we
define the radial vertical domain $D_{r}^{(l)}:=\left\{ r\in
\mathbb{R}
:0<r<R_{s}(x_{l})\right\} $, which represents the spherical particle at $%
x_{l}$, and let%
\begin{equation*}
D_{\Delta r}^{(l)}=\left\{ e_{k}^{(l)}\right\} _{k=1}^{NE^{(l)}}\text{
\textrm{such that }}\overline{D}_{r}^{(l)}=\cup _{k=1}^{NE^{(l)}}e_{k}^{(l)},
\end{equation*}%
where $NE^{(l)}$ denotes the number of elements in the interval $%
[0,R_{s}(x_{l})]$ and $\Delta r_{k}^{(l)}$ is the width of the
element $e_{k}^{(l)}:=\{r:r_{1}^{(l)k}\leq r\leq r_{2}^{(l)k}\}$, we
set $\Delta r=\max_{l}(\max_{k}\Delta r_{k}^{(l)})$ and $\gamma
_{r}=\Delta r^{-1}\min_{l}(\min_{k}\Delta r_{k}^{(l)}).$ The set of
mesh points in each mesh $D_{\Delta r}^{(l)}$ is denoted $\left\{
r_{j}^{(l)}\right\} _{j=1}^{M^{(l)}}$. 
Furthermore, let $\left\{ \widehat{e}_{l}\right\} _{l=1}^{M_{2}}$ be
the collection of nonconforming elements of the mesh
$\widehat{D}_{2h}$ which are associated with the nodes $\left\{
x_{l}\right\} $, they are defined as
follows: if $\left\{ x_{l}\right\} $ is not a boundary point, then%
\begin{equation*}
\widehat{e}_{l}:=\left\{ x\in D_{2}:x_{l}-\frac{h_{l-1}}{2}\leq x<x_{l}+%
\frac{h_{l}}{2}\right\} ;
\end{equation*}%
on the contrary, if $\left\{ x_{l}\right\} $ is a left boundary point, then%
\begin{equation*}
\widehat{e}_{l}:=\left\{ x\in \overline{D}_{2}:x_{l}\leq x<x_{l}+\frac{%
h_{l}}{2}\right\} ,
\end{equation*}%
and if $\left\{ x_{l}\right\} $ is a right boundary point, then%
\begin{equation*}
\widehat{e}_{l}:=\left\{ x\in
\overline{D}_{2}:x_{l}-\frac{h_{l-1}}{2}\leq x\leq x_{l}\right\} .
\end{equation*}%
We define the meshes $\widehat{D}_{3h\Delta r}$ as
\begin{equation*}
\widehat{D}_{3h\Delta r}:=\left\{ \widehat{e}_{l}\times
\overline{D}_{\Delta r}^{(l)}\right\} _{l=1}^{M_{2}}\
\text{\textrm{such that }}\overline{D}_{3}:=\cup
_{l=1}^{M_{2}}\widehat{e}_{l}\times \overline{D}_{\Delta r}^{(l)}.
\end{equation*}
\begin{figure}[th!]
\begin{center}
\includegraphics[height=10cm]{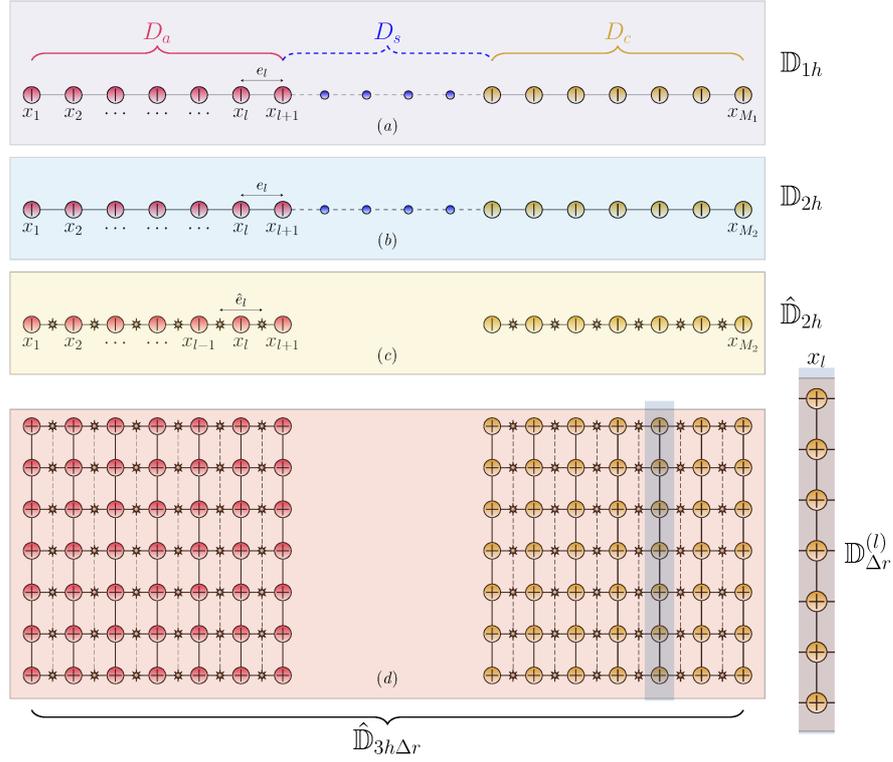}
\end{center}
\caption{\textit{Panel (a)}: the mesh for the domain $D_{1}$, which
includes the negative electrode (anode in the figure) $D_{a}$, the
separator $D_{s}$ and the positive electrode (cathode in the figure)
$D_{c}$. \textit{%
Panel (b)}: the mesh for the domain $D_{2}=D_{a}\cup D_{c}$.
\textit{Panel (c)}: the mesh of nonconforming elements for the domain $D_{2}$. \textit{%
Panel (d)}: the mesh for the domain $D_{3}$.} \label{figure2}
\end{figure}

The families of conforming linear finite element spaces associated with
these meshes are the following. For $i=1,2,$%
\begin{equation*}
V_{h}^{(1)}(\overline{D}_{i}):=\left\{ v_{h}\in C(\overline{D}%
_{i}):\forall e_{m}\in D_{ih},\ v_{h}\mid _{e_{m}}\in P_{1}(e_{m})\right\} ,
\end{equation*}%
where $P_{1}(e_{m})$ denotes the set of linear polynomials defined on $e_{m}$%
.\ Let $\left\{ \psi _{l}(x)\right\} _{l=1}^{M_{i}}$ be the set of nodal
basis functions for the space $V_{h}^{(1)}(\overline{D}_{i})$, then any
function $v_{h}\in V_{h}^{(1)}(\overline{D}_{i})$ can be written as%
\begin{equation*}
v_{h}(x)=\sum_{l=1}^{M_{i}}V_{l}\psi _{l}(x),\ \mathrm{where\ }%
V_{l}=v_{h}(x_{l})\text{.}
\end{equation*}%
Note that $V_{h}^{(1)}(\overline{D}_{i})\subset H^{1}(D_{i})$. The
nonconforming finite element space associated with the mesh
$\widehat{D}_{2h}$ is defined as
\begin{equation*}
V_{h}^{(0)}(\overline{D}_{2}):=\left\{ v_{h}\in L^{2}(D_{2}):\forall
\widehat{e}_{l}\in \widehat{D}_{2h},\ v_{h}\mid
_{\widehat{e}_{l}}\in P_{0}(\widehat{e}_{l})\right\} ,
\end{equation*}
where $P_{0}(\widehat{e}_{l})$ is the set of polynomials of degree
zero defined on $\widehat{e}_{l}$. Let $\left\{ \chi _{l}(x)\right\}
_{l=1}^{M_{2}}$ be the set of nodal basis functions for $V_{h}^{(0)}(%
\overline{D}_{2})$,%
\begin{equation*}
\chi _{l}(x)=\left\{
\begin{array}{r}
1\text{ \ \textrm{if} \ }x\in \widehat{e}_{l}, \\
\\
0\ \ \text{\textrm{otherwise}},%
\end{array}%
\right.
\end{equation*}%
then any function $v_{h}(x)\in V_{h}^{(0)}(\overline{D}_{2})$ is expressed as%
\begin{equation*}
v_{h}(x)=\sum_{l=1}^{M_{2}}V_{l}\chi _{l}(x),\ \mathrm{where\ }%
V_{l}=v_{h}(x_{l}).
\end{equation*}%
It is worth remarking that for $1\leq p<\infty$, the $L^{p}$-norm of $%
v_{h}(x)\in V_{h}^{(0)}(\overline{D}_{2})$ is given as $\left\Vert
v_{h}\right\Vert _{L^{p}(D_{2})}^{p}=\sum_{l=1}^{M_{2}}\widehat{h}
_{l}V_{l}^{p} $, where $\widehat{h}_{l}$ denotes the length of the
element $\widehat{e}_{l}$, and the $L^{2}$-inner product of
$v_{h},w_{h} \in V_{h}^{(0)}(\overline{D}_{2})$,
$\displaystyle\int_{D_{2}}v_{h}w_{h}dx=\sum_{l=1}^{M_{2}}
\widehat{h}_{l}V_{l}W_{l}$. Next, we introduce the finite element
space $V_{\Delta
r}^{(1)}(\overline{D}_{r}^{(l)})$. For $1\leq l\leq M_{2}$,%
\begin{equation*}
V_{\Delta r}^{(1)}(\overline{D}_{r}^{(l)}):=\left\{ v_{\Delta r}\in C(%
\overline{D}_{r}^{(l)}):\forall e_{k}^{(l)}\in D_{\Delta r}^{(l)},\
v_{\Delta r}^{(l)}(r)\mid _{e_{k}^{(l)}}\in P_{1}(e_{k}^{(l)})\right\} .
\end{equation*}%
So, if $\left\{ \alpha _{j}^{(l)}(r)\right\} _{j=1}^{M^{(l)}}$
denotes the set of nodal basis of $V_{\Delta
r}(\overline{D}_{r}^{(l)})$, any function $v_{\Delta r}^{(l)}\in
V_{\Delta r}^{(1)}(\overline{D}_{r}^{(l)})\subset
H_{r}^{1}(0,R_{s}(x_{l}))$ can be written as%
\begin{equation*}
v_{\Delta r}^{(l)}(r)=\sum_{j=1}^{M^{(l)}}V_{j}^{(l)}\alpha
_{j}^{(l)}(r),\ \mathrm{where\
}V_{j}^{(l)}=v_{h}^{(l)}(r_{j})\text{.}
\end{equation*}%
Regarding the meshes $\widehat{D}_{3h\Delta r}$, we define the finite element space $%
V_{h\Delta r}(\overline{D}_{3})$ as follows. For $1\leq l\leq M_{2}$ and $%
1\leq k\leq NE^{(l)}$%
\begin{equation*}
V_{h\Delta r}(\overline{D}_{3}):=\displaystyle\left\{ {v}_{h\Delta
r}{\in H}
_{\ \ r}^{0,1}{(D_{2}\times (0,R_{s})(\cdot )):v_{h\Delta r}(x;r)\mid _{%
\widehat{e}_{l}\times e_{k}^{(l)}}\in P_{0}(\widehat{e}_{l})\otimes
P_{1}(e_{k}^{(l)})}\right\},
\end{equation*}%
noting that when $x\in \widehat {e}_{l}$, $R_{s}(x)=R_{s}(x_{l})$.
Hence, any function $v_{h\Delta r}(x;r)\in V_{h\Delta
r}(\overline{D}_{3})$
is of the form%
\begin{equation*}
v_{h\Delta r}(x;r)=\sum_{l=1}^{M_{2}}\sum_{j=1}^{M^{(l)}}V_{lj}\chi
_{l}(x)\alpha _{j}^{(l)}(r),\ \mathrm{where\ }V_{lj}=v_{h\Delta
r}(~x_{l},r_{j}),
\end{equation*}%
or equivalently, using the notation $v_{\Delta r}^{(l)}(r)$ to
denote $v_{h\Delta
r}(x_{l};r)$, we can write%
\begin{equation} \label{vhr}
v_{h\Delta r}(x;r)=\sum_{l=1}^{M_{2}}v_{\Delta r}^{(l)}(r)\chi _{l}(x).
\end{equation}.
The function $v_{sh}(x):=v_{h\Delta r}(x;R_{s}(x))$ is given by the
expression%
\begin{equation*}
v_{sh}(x;R_{s}(x))=\sum_{l=1}^{M_{2}}V_{lM^{(l)}}\chi _{l}(x),
\end{equation*}%
so that $v_{sh}(x)\in V_{h}^{(0)}(\overline{D}_{2})$. We calculate
$\phi _{1h}(x,t)$, which is the approximation to $\phi_{1}(x,t)$, in
the finite
dimensional space%
\begin{equation*}
W_{h}(\overline{D}_{1}):=\left\{ v_{h}\in V_{h}^{(1)}(\overline{D}%
_{1}):\int_{D_{1}}v_{h}dx=0\right\} ,\ W_{h}(\overline{D}_{1})\subset
W(D_{1}).
\end{equation*}

Thus, the finite element formulation is as follows. For all $t\in (0,T_{%
\mathrm{end}})$, the semi-discrete approximation $(u_{h}(t),v_{h\Delta
r}(t),\phi _{1h}(t),\phi _{2h}(t))\in V_{h}^{(1)}(\overline{D}_{1})\times
V_{h\Delta r}(\overline{D}_{3})\times W_{h}(\overline{D}_{1})\times
V_{h}^{(1)}(\overline{D}_{2}),$ $(u_{h}^{0},v_{h}^{0})\in V_{h}^{(1)}(%
\overline{D}_{1})\times V_{h\Delta r}(\overline{D}_{3})$, is solution to the
following system of equations.%
\begin{equation}
\int_{D_{1}}\frac{\partial u{_{h}}}{\partial {t}}w_{h}dx+\int_{D_{1}}k_{1}%
\frac{\partial u_{h}}{\partial x}\frac{dw_{h}}{dx}dx=%
\int_{D_{1}}a_{1}J_{h}w_{h}dx\ \forall w_{h}\in V_{h}^{(1)}(\overline{D}%
_{1}).  \label{num1}
\end{equation}

\begin{equation}
\left\{
\begin{array}{l}
\displaystyle\int_{D_{2}}\int_{0}^{R_{s}(x)}\displaystyle\frac{\partial v{%
_{h\Delta r}}}{\partial {t}}w_{h\Delta r}r^{2}drdx+\displaystyle%
\int_{D_{2}}\int_{0}^{R_{s}(x)}k_{2}\displaystyle\frac{\partial v{_{h\Delta
r}}}{\partial {r}}\frac{\partial w_{h\Delta r}}{\partial r}r^{2}drdx \\
\\
=-\displaystyle\int_{D_{2}}\frac{R_{s}^{2}(x)J_{h}w_{h\Delta
r}(x,R_{s}(x))}{a_{2}(x)F}dx\ \ \forall w_{h\Delta r}\in V_{h\Delta
r}(\overline{D}_{3}).
\end{array}%
\right.  \label{num2}
\end{equation}

\begin{equation}
\int_{D_{1}}\kappa (u_{h})\frac{\partial \phi _{1h}}{\partial x}\frac{dw_{h}%
}{dx}dx=\int_{D_{1}}J_{h}w_{h}dx\ \ \forall w_{h}\in V_{h}^{(1)}(\overline{D}%
_{1}).  \label{num3}
\end{equation}

\bigskip

\begin{equation}
\int_{D_{2}}\sigma \frac{\partial \phi _{2h}}{\partial x}\frac{dw_{h}}{dx}%
dx=-\int_{D_{2}}\left( J_{h}+g\right) w_{h}dx\ \ \forall w_{h}\in
V_{h}^{(1)}(\overline{D}_{2}).  \label{num4}
\end{equation}

\bigskip

\begin{equation}
\int_{D_{1}}J_{h}dx=\int_{D_{2}}J_{h}dx=0, \text{ with
}\int_{D_{\mathrm{n}}}J_{h}dx=I(t)=- \int_{D_{\textrm{p}}}J_{h}dx.
\label{num5}
\end{equation}%
In this system,

\begin{equation}
\begin{array}{l}
J_{h}:=J(x,u_{_{h}},v_{sh},\eta _{h})=a_{2}(x)i_{0h}\sinh \left(
\beta \eta _{h}\right), \
i_{0h}:=i_{0}(u_{h},v_{sh}) \\
\\
\eta _{h}:=\phi _{1h}-\phi _{2h}-\alpha_{h} \ln
u_{h}-\overline{U}_{h}(v_{sh}),\ \alpha_{h} =\alpha(u_{h}).
\end{array}
\label{num6}
\end{equation}%
Note that $J_{h}$ also depends on $t$ through $u_{h}$, $v_{sh}$ and $\eta
_{h}$.

\begin{remark}
\label{remark2} We must note, see (\ref{vhr}), that $v_{h\Delta r}$
and $w_{h\Delta r}$ are elementwise constant functions in the $x$
direction, and $J_{h}$ is a piecewise continuous function in $x$ for
which it makes sense to consider the approximation,
$I_{h}^{(0)}J_{h}\in V_{h}^{(0)}(\overline{D}_{2})$, see in
Subsection 4.1 the definition of the interpolant $I_{h}^{(0)}$.
Then, approximating $J_{h}$ by $I_{h}^{(0)}J_{h}$ one readily shows,
by performing the integral on $D_{2}$,  that (\ref{num2}) can be
recast as follows: for all mesh-point $\left\{ x_{l}\right\} \in
D_{2h}$, calculate $v_{\Delta
r}^{(l)}(r,t)$ $\in V_{\Delta r}^{(1)}(\overline{D}_{r}^{(l)})$ such that%
\begin{equation} \label{num2_1}
\begin{array}{r}
\displaystyle\int_{0}^{R_{s}(x_{l})}\frac{\partial v_{\Delta r}^{(l)}(r,t)}{%
\partial t}w_{\Delta r}^{(l)}(r)r^{2}dr+\int_{0}^{R_{s}(x_{l})}\displaystyle %
k_{2}\frac{\partial v_{\Delta r}^{(l)}(r,t)}{\partial
r}\frac{\partial
w_{\Delta r}^{(l)}(r)}{\partial r}r^{2}dr \\
\\
=-R_{s}^{2}(x_{l})(a_{2}(x_{l})F)^{-1}J_{h}\mid _{x_{l}}w_{\Delta
r}^{(l)}(R_{s}(x_{l}))\ \ \forall w_{\Delta r}^{(l)}\in V_{\Delta r}^{(1)}(%
\overline{D}_{r}^{(l)}).%
\end{array}%
\end{equation}
Once $v_{\Delta r}^{(l)}(r,t)$ is known, one calculates $v_{h\Delta
r}$ by the expression (\ref{vhr}).
\end{remark}

Notice that this equation is the finite element approximation of (\ref{w2})
for $w\in H_{r}^{1}(0,R_{s}(x_{l}))$, see \cite{BG}.

Based on Remark \ref{remark1} and since $H^{1}(D_{i})\hookrightarrow
C(\overline{D}_{i})$, we introduce the following spaces which are
used
in the error analysis and in the application of the fixed point theorems in Section 4.%
\begin{equation*}
\begin{array}{l}
S_{P}:=\left\{ u\in C_{\mathrm{part}}(\overline{D}_{1}\times \lbrack 0,T_{%
\mathrm{end}}]):\displaystyle\frac{1}{P}\leq u\leq P\right\} , \\
\\
S_{Q}:=\left\{ w\in C_{\mathrm{part}}(\overline{D}_{2}\times \lbrack 0,T_{%
\mathrm{end}}]):\displaystyle\frac{1}{Q}\leq w\leq 1-\frac{1}{Q}\right\} , \\
\\
S_{Q}^{\ast }:=\left\{ w\in C_{\mathrm{part}}^{\ast
}(\overline{D}_{2}\times
\lbrack 0,T_{\mathrm{end}}]):\frac{1}{Q}\leq w\leq 1-\frac{1}{Q}\right\} ,%
\end{array}%
\end{equation*}%
where $P$ and $Q$ are constants sufficiently large; for $i=1,2$, $C_{\mathrm{part}%
}(\overline{D}_{i}\times [0,T_{\mathrm{end}}])$ denotes the set of
piecewise
continuous functions in time and continuous in space and $C_{\mathrm{part}%
}^{\ast }(\overline{D}_{2}\times [0,T_{\mathrm{end}}])$ denotes the
set of piecewise continuous functions in both time and space.
$S_{P}$ is the candidate pool for the concentration $u$ and its
approximate $u_{h}$, whereas $S_{Q}$ and $S_{Q}^{\ast}$ play the
same role for the concentrations $\frac{v_{s}}{v_{\max }}$ and
$\frac{v_{sh}}{v_{\max }}$ respectively. So, we make the following
assumption.

\textbf{A4)\ }There exist constants $P,\ Q$ and $K$ sufficiently
large such that that for almost
every $t\in \lbrack 0,T_{\mathrm{end}}]$ the following bounds hold:%
\begin{equation}
\frac{1}{P}\leq u(t),\ u_{h}(t)\leq P,\text{ \ }\ \frac{1}{Q}\leq
\frac{v_{s}(t)}{v_{\max }},\ \frac{v_{sh}(t)}{v_{\max }}\leq \left(
1-\frac{1}{Q}\right) , \label{estK1}
\end{equation}%
and for $i=1,2$,

\begin{equation}
\left\Vert \frac{\partial \phi _{1}(t)}{\partial x}\right\Vert _{L^{\infty
}(D_{1})},\ \left\Vert \phi _{i}(t)\right\Vert _{H^{1}(D_{i})},\ \left\Vert
\phi _{i}(t)\right\Vert _{L^{\infty }(D_{i})},\ \left\Vert \phi
_{ih}(t)\right\Vert _{L^{\infty }(D_{i})}\leq K.  \label{estK2}
\end{equation}


\section{Error analysis for the semidiscrete problem}

We present in this section the error analysis for the semidiscrete
potentials and concentrations. Since the development of such an
analysis is long, we have split its presentation in a sequence of
three subsections. In the first one, we introduce some auxiliary
results needed for the error analysis. The second subsection deals
with the error estimate for the potentials. Observing that the P2D
model is a nonlinear coupled system of equations, then the error for
the potentials depends on the error estimates for the
concentrations, the analysis of which is carried out in the last
subsection.

\subsection{Auxiliary results}

It is well known \cite{Ci} that for the finite element spaces $V_{h}^{(p)}(\overline{D}%
_{i})$ ($i=1,2; p=0,1$) the following approximation property holds. For $%
1\leq s\leq p+1$,%
\begin{equation}
\inf_{v_{h}\in V_{h}^{(p)}(D_{i})}\left\{ \left\Vert
v-v_{h}\right\Vert _{L^{2}(D_{i})}+h\left\Vert \frac{d^{p}\left(
v-v_{h}\right) }{dx^{p}}\right\Vert _{L^{2}(D_{i})}\right\} \leq
Ch^{s}\left\vert v\right\vert _{H^{s}(D_{i})}, \label{pre1}
\end{equation}%
where it should be understood that for $p=0$,
$\displaystyle\frac{d^{p}\left( v-v_{h}\right) }{dx^{p}}=v-v_{h}$.
For symmetric radial functions defined in the interval $[0,R]$, let
$V_{\Delta r}^{(1)}[0,R]$ be a linear finite element space where we
approximate such functions, one can prove, following the approach
used to
prove Lemmas 1 and 2 in \cite{ET}, that when $w\in H_{r}^{2}(0,R)$,%
\begin{equation}
\inf_{w_{\Delta r}\in V_{\Delta r}^{(1)}[0,R]}\left\{ \left\Vert
w-w_{\Delta r}\right\Vert _{L_{r}^{2}(0,R)}+\Delta r\left\Vert
\frac{d\left( w-w_{\Delta r}\right) }{dr}\right\Vert
_{L_{r}^{2}(0,R)}\right\} \leq C\Delta r^{2}\left\vert w\right\vert
_{H_{r}^{2}(0,R)}.  \label{pre2}
\end{equation}%
We consider the interpolants $I_{h}^{(1)}:C(\overline{D}_{i})\rightarrow $ $%
V_{h}^{(1)}(\overline{D}_{i})$, $I_{h}^{(0)}:C(\overline{D}%
_{2})\rightarrow V_{h}^{(0)}(\overline{D}_{2}),$ and the elliptic projection
$P_{1}:H^{1}(D_{1})\rightarrow V_{h}^{(1)}(D_{1})$ such that for $u\in
H^{1}(D_{1})$%
\begin{equation}
\int_{D_{1}}\left(
k_{1}\frac{d(P_{1}u-u)}{dx}\frac{du_{h}}{dx}+\lambda
(P_{1}u-u)u_{h}\right) dx=0\ \ \forall u_{h}\in V_{h}^{(1)}(D_{1}),
\label{pre3}
\end{equation}%
where $\lambda >0$ is a constant; the error analysis for elliptic
problems suggests that a good choice is $\lambda
=\underline{k_{1}}$. By virtue of (\ref{pre1}) it follows that\
there exists a constant $C$ independent of $h$ such that
\begin{equation}
\left\Vert v-I_{h}^{(0)}v\right\Vert _{L^{2}(D_{2})}\leq Ch\left\Vert
v\right\Vert _{H^{1}(D_{2})};  \label{pre4}
\end{equation}%
for $1\leq m\leq 2,\ 0\leq l\leq 1,$

\begin{equation}
\left\Vert v-I_{h}^{(1)}v\right\Vert _{H^{l}(D_{i})}\leq
Ch^{m-l}\left\Vert v\right\Vert _{H^{m}(D_{i})};  \label{pre4.1}
\end{equation}%
and from the well known error analysis for elliptic problems
\cite{Ci}
\begin{equation}
\left\Vert u-P_{1}u\right\Vert _{H^{l}(D_{1})}\leq
Ch^{m-l}\left\Vert u\right\Vert _{H^{m}(D_{1})}.  \label{pre5}
\end{equation}%
Likewise, for symmetric radial functions $v\in H_{r}^{q}(0,R)$, we define
the elliptic projector $P_{1}^{r}:H_{r}^{1}(0,R)\rightarrow V_{\Delta
r}^{(1)}[0,R]$ as the solution of the problem%
\begin{equation}
\int_{0}^{R}\left( k_{2}\frac{d(P_{1}^{r}w-w)}{dr}\frac{dw_{\Delta r}}{dr}%
+\lambda (P_{1}^{r}w-w)w_{\Delta r}\right) r^{2}dr=0\ \ \forall
w_{\Delta r}\in V_{\Delta r}^{(1)}[0,R],  \label{pre6}
\end{equation}%
with $\lambda >0$; as before, a good choice now is
$\lambda=\underline{k_{2}}$. By virtue of (\ref{pre2}) it follows
that there exists a constant $C$ independent of $\Delta r$ such that%
\begin{equation}
\left\Vert w-P_{1}^{r}w\right\Vert _{L_{r}^{2}(0,R)}+\Delta r\left\Vert
\frac{d\left( w-P_{1}^{r}w\right) }{dr}\right\Vert _{L_{r}^{2}(0,R)}\leq
C\Delta r^{2}\left\vert w\right\vert _{H_{r}^{2}(0,R)}.  \label{pre7}
\end{equation}%
$P_{1}^{r}$ can be extended to functions of $x$ and $r$ in an
$L^{2}$-sense. Thus, for $(x,r)\in D_{2}\times (0,R(\cdot ))$ we
define the extended projection $P_{1}^{r}:H_{\ \
r}^{1,1}(D_{2}\times (0,R(\cdot )))\rightarrow L^{2}(D_{2})\otimes
V_{\Delta
r}^{(1)}[0,R(\cdot )]$ as%
\begin{equation}
\int_{D_{2}}\int_{0}^{R(x)}\left( k_{2}\frac{\partial (P_{1}^{r}w-w)}{\partial r%
}\frac{dw_{\Delta r}}{dr}+\lambda (P_{1}^{r}w-w)w_{\Delta r}\right)
r^{2}drdx=0\ \ \forall w_{\Delta r}\in L^{2}(D_{2})\otimes V_{\Delta
r}^{(1)}[0,R(\cdot )].  \label{pre8}
\end{equation}%
Assuming that $w\in H_{\ \ r}^{1,1}(D_{2}\times (0,R(\cdot ))$ is such that
for a.e. $x\in D_{2}$, $w(x,\cdot )\in H_{r}^{2}(0,R(x))$, then by virtue of (\ref%
{pre7})%
\begin{equation*}
\left\Vert \left( w-P_{1}^{r}w\right) (x,\cdot )\right\Vert
_{L_{r}^{2}(0,R(x))}+\Delta r\left\Vert \frac{\left( \partial \left(
w-P_{1}^{r}w\right) \right) (x,\cdot )}{\partial r}\right\Vert
_{L_{r}^{2}(0,R(x))}\leq C\Delta r^{2}\left\vert w(x,\cdot )\right\vert
_{H_{r}^{2}(0,R(x))}.
\end{equation*}%
Noting that $H_{\ \ r}^{0,q}(D_{2}\times (0,R(\cdot
)))=L^{2}(D_{2},L_{r}^{2}(0,R(\cdot )))\cap L^{2}(D_{2},H_{r}^{q}(0,R(\cdot )))$%
, then it readily follows that for $q=0,1$%
\begin{equation}
\left\Vert w-P_{1}^{r}w\right\Vert _{H_{\ \ r}^{0,q}(D_{2}\times
(0,R(\cdot )))}\leq C\Delta r^{2-q}\left\Vert w\right\Vert _{H_{\ \
r}^{0,2}(D_{2}\times (0,R(\cdot )))}.  \label{pre10}
\end{equation}
We shall also consider the $x$-Lagrange interpolant for functions
that depend on $x$ and $r$, $I_{0}^{x}:H_{\ \ r}^{1,1}(D_{2}\times
(0,R(\cdot )))\rightarrow V_{h}^{(0)}(\overline{D}_{2})\otimes
H_{r}^{1}(0,R(\cdot ))$. Thus,
for $v(x;r)\in H_{\ \ r}^{1,1}(D_{2}\times (0,R(\cdot )))$%
\begin{equation} \label{pre4.1}
I_{0}^{x}v(x;r)=\sum_{l=1}^{M_{2}}v(x_{l};r)\chi _{l}(x) \
\textrm{with} \ r\in (0,R(x_{l})).
\end{equation}%
Since $I_{0}^{x}$ can be viewed as an extended Lagrange interpolant $%
I_{h}^{(0)}:C(\overline{D}_{2})\rightarrow
V_{h}^{(0)}(\overline{D}_{2})$
, then based on (\ref{pre4}) one can show that%
\begin{equation}
\left\Vert v-I_{0}^{x}v\right\Vert _{L^{2}(D_{2},L_{r}^{2}(0,R(\cdot
)))}\leq Ch\left\Vert v\right\Vert
_{H^{1}(D_{2};L_{r}^{2}(0,R_{s}(\cdot )))}.  \label{uv3}
\end{equation}

\begin{lemma}
\label{lem1} Let $I=(a,b),\ 0\leq a<b$, be a bounded interval with
$\overline{I}=[a,b]$, and let $f\in H^{1}(I)$. There exists an
arbitrarily small number $\epsilon $ and a positive constant
$C(\epsilon )$
such that%
\begin{equation}
\left\Vert f\right\Vert _{L^{\infty }(I)}\leq \epsilon \left\Vert \frac{df}{%
dx}\right\Vert _{L^{2}(I)}+C(\epsilon )\left\Vert f\right\Vert _{L^{2}(I)}.
\label{pre11}
\end{equation}
\end{lemma}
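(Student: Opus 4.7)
The plan is to derive the stated inequality from the one-dimensional Sobolev embedding $H^1(I)\hookrightarrow L^{\infty}(I)$ and then apply Young's inequality to extract an arbitrarily small coefficient in front of $\|df/dx\|_{L^2(I)}$.

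First I would use the fundamental theorem of calculus to get a pointwise bound. For any $x,y\in\overline{I}$, write $f(x)^{2}=f(y)^{2}+2\int_{y}^{x}f(s)f'(s)\,ds$, and then average the identity in $y$ over $I$. This yields
\begin{equation*}
(b-a)\,f(x)^{2}\;\le\;\|f\|_{L^{2}(I)}^{2}+2(b-a)\|f\|_{L^{2}(I)}\|f'\|_{L^{2}(I)},
\end{equation*}
after applying Cauchy--Schwarz on the double integral. Dividing by $(b-a)$ and taking the supremum in $x$ gives the basic interpolation bound
\begin{equation*}
\|f\|_{L^{\infty}(I)}^{2}\;\le\;C_{1}\|f\|_{L^{2}(I)}^{2}+C_{2}\|f\|_{L^{2}(I)}\|f'\|_{L^{2}(I)},
\end{equation*}
with constants depending only on $b-a$.

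Next, I would split the cross term by Young's inequality in the form $ab\le\tfrac{\delta}{2}a^{2}+\tfrac{1}{2\delta}b^{2}$ applied with $\delta=2\epsilon^{2}/C_{2}$, which gives
\begin{equation*}
C_{2}\|f\|_{L^{2}(I)}\|f'\|_{L^{2}(I)}\;\le\;\epsilon^{2}\|f'\|_{L^{2}(I)}^{2}+\frac{C_{2}^{2}}{4\epsilon^{2}}\|f\|_{L^{2}(I)}^{2}.
\end{equation*}
Substituting back produces $\|f\|_{L^{\infty}(I)}^{2}\le\epsilon^{2}\|f'\|_{L^{2}(I)}^{2}+\widetilde{C}(\epsilon)\|f\|_{L^{2}(I)}^{2}$, and a final application of the elementary bound $\sqrt{A+B}\le\sqrt{A}+\sqrt{B}$ for non-negative $A,B$ yields (\ref{pre11}) with $C(\epsilon):=\sqrt{\widetilde{C}(\epsilon)}$.

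There is no real obstacle here; the only subtlety is making sure the averaging step that produces the $(b-a)$-weighted inequality is written for $f\in H^{1}(I)$ (so that the trace-like pointwise values of $f$ are unambiguous, which is automatic in one dimension since $H^{1}(I)\hookrightarrow C(\overline{I})$). Everything else is Cauchy--Schwarz and Young's inequality, and the constant $C(\epsilon)$ naturally blows up as $\epsilon\to 0$, which is consistent with the statement.
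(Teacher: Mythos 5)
Your proof is correct and follows essentially the same strategy as the paper's: both start from the fundamental theorem of calculus applied to $f^2$, control the integral $2\int f f'$ via Cauchy--Schwarz/Young, bound the residual $f^2(y)$ term by $\frac{1}{|I|}\|f\|_{L^2(I)}^2$, and finish with $\sqrt{A+B}\le\sqrt{A}+\sqrt{B}$. The only difference is cosmetic --- the paper picks $y$ to be a minimizer of $f^2$ on $\overline I$, whereas you average the identity over $y\in I$; both devices yield the same $\frac{1}{|I|}\|f\|_{L^2(I)}^2$ contribution.
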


\begin{proof}
Since $H^{1}(I)\hookrightarrow C(\overline{I})$, then $f\in C(
\overline{I})$ and so does $f^{2}$, so for any $y,\ z\in I$, $y<z$,
we have
that%
\begin{equation*}
\begin{array}{r}
f^{2}(z)-f^{2}(y)=\displaystyle\int_{y}^{z}\frac{df^{2}}{dx}dx=2\displaystyle%
\int_{y}^{z}f\frac{df}{dx}dx \\
\\
\leq \epsilon ^{2}\displaystyle\int_{I}\left\vert \frac{df}{dx}\right\vert
^{2}dx+\epsilon ^{-2}\int_{I}f^{2}dx\text{.}%
\end{array}%
\end{equation*}%
Since exists $x^{\ast }\in I$ such that $f^{2}(x^{\ast })=\min_{x\in
I}f^{2}(x)$, then letting $y=x^{\ast }$ it follows that%
\begin{equation*}
f^{2}(y)\leq \frac{1}{\left\vert I\right\vert }\int_{I}f^{2}dx
\end{equation*}%
Substituting this estimate the result follows.
\end{proof}

The next result is a rewording of Lemma 2.4 of \cite{SE}. Let
$F^{1}(R)$ be the closure of $C^{\infty }-$ functions with respect
to the $H_{r}^{1}(0,R)$-norm and with the property that their with
first derivative vanishes at $r=0$.

\begin{lemma}
\label{lem2} If $v\in F^{1}(R),$ then for all $0<a<R,$

1)\ $v\in H^{1}(a,R).$

2)\ There exists an arbitrarily small number $\epsilon $ and a positive
(possibly large) constant $C(\epsilon )$, both depending on $a$, such that%
\begin{equation}
\left\Vert v\right\Vert _{L^{\infty }(a,R)}\leq \epsilon \left\Vert \frac{dv%
}{dr}\right\Vert _{L_{r}^{2}(0,R)}+C(\epsilon )\left\Vert v\right\Vert
_{L_{r}^{2}(0,R)}.  \label{pre12}
\end{equation}
\end{lemma}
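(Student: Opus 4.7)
The statement has two parts, and my plan is to reduce both to classical one–dimensional facts on the interval $(a,R)$ by exploiting the positive lower bound $r\ge a>0$, which trivializes the weight $r^2$ up to the constant $a^2$.

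\textbf{Part (1).} Here I would simply observe that for any $r\in(a,R)$ one has $r^2\ge a^2$, so that
\begin{equation*}
\int_a^R\!\left(v^2+\Big(\tfrac{dv}{dr}\Big)^2\right)dr
\;\le\; \frac{1}{a^2}\int_a^R\!\left(v^2+\Big(\tfrac{dv}{dr}\Big)^2\right)r^2\,dr
\;\le\; \frac{1}{a^2}\,\|v\|_{H_r^1(0,R)}^2 \;<\;\infty .
\end{equation*}
Since $v\in F^1(R)\subset H_r^1(0,R)$, the right-hand side is finite, so $v\in H^1(a,R)$. (The fact that classical derivatives of smooth approximants restrict to weak derivatives on $(a,R)$ is standard once the integrability is secured, because the cutoff away from $r=0$ removes the degeneracy of the measure $r^2dr$.)

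\textbf{Part (2).} This is essentially Lemma \ref{lem1} applied on the interval $I=(a,R)$, combined with the weighting argument of Part (1). By Part (1), $v\in H^1(a,R)\hookrightarrow C([a,R])$, so I can repeat the proof of Lemma \ref{lem1}: for $y,z\in[a,R]$ with $y<z$,
\begin{equation*}
v^2(z)-v^2(y)\;=\;2\int_y^z v\,\frac{dv}{dr}\,dr
\;\le\;\epsilon^2\int_a^R\!\Big|\tfrac{dv}{dr}\Big|^2 dr\;+\;\epsilon^{-2}\int_a^R v^2\,dr,
\end{equation*}
and then choosing $y=x^\ast\in[a,R]$ at which $v^2$ attains its minimum gives
\begin{equation*}
v^2(x^\ast)\;\le\;\frac{1}{R-a}\int_a^R v^2\,dr.
\end{equation*}
To pass back to the weighted norms on $(0,R)$ I would use $r^2\ge a^2$ on $(a,R)$ one more time, producing
\begin{equation*}
\int_a^R\!\Big|\tfrac{dv}{dr}\Big|^2 dr \le a^{-2}\Big\|\tfrac{dv}{dr}\Big\|_{L_r^2(0,R)}^2,
\qquad
\int_a^R v^2\,dr \le a^{-2}\|v\|_{L_r^2(0,R)}^2 .
\end{equation*}
Combining these estimates, taking a square root, and renaming $\epsilon$, I obtain (\ref{pre12}) with a constant $C(\epsilon)$ that depends on $a$ (through the factor $a^{-1}$ and $(R-a)^{-1/2}$), exactly as the statement allows.

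\textbf{Main obstacle.} There is no deep obstacle; the only subtlety is the justification that $v\in F^1(R)$ really restricts to an $H^1$-function on $(a,R)$ in the unweighted sense, which is why Part (1) is stated separately. Once that is in place, Part (2) is a direct transcription of the argument in Lemma \ref{lem1} with the extra bookkeeping of the weight $r^{-2}\le a^{-2}$ to move between $L^2(a,R)$ and $L^2_r(0,R)$. Since the constant is allowed to depend on $a$, the loss of $a^{-1}$ is harmless.
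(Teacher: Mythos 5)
Your proof is correct and follows essentially the same route as the paper: Part (1) is the weight comparison $r^2\ge a^2$ on $(a,R)$, giving $a^2\left\Vert v\right\Vert_{H^1(a,R)}^2\le \left\Vert v\right\Vert_{H_r^1(0,R)}^2$ (the paper phrases this as convergence of $C^\infty$ approximants in the stronger norm implying convergence in $H^1(a,R)$), and Part (2) is Lemma~\ref{lem1} applied on $I=(a,R)$ combined once more with that weight bound. The only cosmetic difference is that you re-derive Lemma~\ref{lem1} inline rather than citing it.
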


\begin{proof}
To prove 1) we note that for any $v\in F^{1}(R)$,
\begin{equation}
\begin{array}{l}
\left\Vert v\right\Vert _{H_{r}^{1}(0,R)}^{2}=\displaystyle%
\int_{0}^{R}v^{2}r^{2}dr+\displaystyle\int_{0}^{R}\left\vert \frac{dv}{dr}%
\right\vert ^{2}r^{2}dr \\
\\
\geq \displaystyle\int_{a}^{R}v^{2}r^{2}dr+\displaystyle\int_{a}^{R}\left%
\vert \frac{dv}{dr}\right\vert ^{2}r^{2}dr \\
\\
\geq a^{2}\displaystyle\int_{a}^{R}v^{2}dr+\displaystyle\int_{a}^{R}\left%
\vert \frac{dv}{dr}\right\vert ^{2}dr=a^{2}\left\Vert v\right\Vert
_{H^{1}(a,R)}^{2}.%
\end{array}
\label{pre13}
\end{equation}%
So, any sequence $\{v_{n}\}$ that converges with respect to the $%
H_{r}^{1}(0,R)-$norm also converges with respect to the $H^{1}(a,R)-$norm.
As for the point 2), we notice that from (\ref{pre11}) and (\ref{pre13}) it
readily follows (\ref{pre12}).
\end{proof}

\begin{lemma} \label{lem3}
For each $(x,t)\in D_{2}\times [0,T_{\mathrm{end}}]$ we
have the following estimates.%
\begin{equation}
\begin{array}{l}
\left\vert i_{0}-i_{0h}\right\vert \leq C\left\vert
u-u_{h}\right\vert
+C\left\vert v_{s}-v_{hs}\right\vert , \\
\\
\left\vert \ln u-\ln u_{h}\right\vert \leq C\left\vert u-u_{h}\right\vert ,
\\
\\
\left\vert \overline{U}(v_{s})-\overline{U}(v_{sh})\right\vert \leq
C\left\vert v_{s}-v_{sh}\right\vert .%
\end{array}
\label{pre10.1}
\end{equation}
\end{lemma}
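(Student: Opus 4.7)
All three inequalities are Lipschitz-type estimates that reduce to a mean value argument once we know that $u,u_h,v_s,v_{sh}$ stay in fixed compact subintervals that avoid the singularities of the relevant functions. That containment is exactly what assumption \textbf{A4} provides: $u,u_h\in[1/P,P]$ and $v_s/v_{\max},v_{sh}/v_{\max}\in[1/Q,1-1/Q]$.

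For the third inequality, the plan is to use $\overline{U}(v_s)-\overline{U}(v_{sh})=U(v_s)-U(v_{sh})$, since $H(x,t)$ does not depend on $v_s$; then by the smoothness of $U$ (assumed known and smooth in $v_s$ on $[v_{\max}/Q,v_{\max}(1-1/Q)]$) the mean value theorem yields the Lipschitz bound with a constant $C=\sup|U'|$ taken over that compact interval. For the second inequality, $f(u)=\ln u$ has $|f'(u)|=1/u\leq P$ on $[1/P,P]$, so the mean value theorem gives $|\ln u-\ln u_h|\leq P|u-u_h|$.

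The first inequality requires slightly more care because, with $\alpha_a=\alpha_c=1/2$, the function
\[
i_{0}(u,v_s)=k\,u^{1/2}(v_{\max}-v_s)^{1/2}\,v_{s}^{1/2}
\]
has partial derivatives that are unbounded near $u=0$, $v_s=0$, and $v_s=v_{\max}$. However, on the compact set
\[
K=[1/P,P]\times\bigl[v_{\max}/Q,\,v_{\max}(1-1/Q)\bigr],
\]
the partials $\partial_u i_0$ and $\partial_{v_s}i_0$ are continuous and hence bounded by some constant $C$ depending only on $P,Q,k,v_{\max}$. Writing
\[
i_0(u,v_s)-i_0(u_h,v_{sh})=\int_0^1\!\!\bigl[\partial_u i_0\cdot(u-u_h)+\partial_{v_s}i_0\cdot(v_s-v_{sh})\bigr]_{(\theta)}d\theta,
\]
with the integrand evaluated along the segment from $(u_h,v_{sh})$ to $(u,v_s)$ (which lies inside $K$ by convexity of $K$ and by \textbf{A4}), and taking absolute values, produces the desired bound $|i_0-i_{0h}|\leq C|u-u_h|+C|v_s-v_{sh}|$.

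The only substantive point is therefore verifying that the straight segments used in the mean value steps remain in the compact domains where the derivatives are bounded; this is immediate from the convexity of the intervals supplied by \textbf{A4}. Beyond this, the argument is a direct application of the fundamental theorem of calculus combined with uniform boundedness of derivatives on compact sets, so I expect no genuine obstacle.
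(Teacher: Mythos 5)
Your proof is correct and follows essentially the same approach as the paper's: both arguments rest on the observation that assumption \textbf{A4} confines $u,u_{h},v_{s},v_{sh}$ to compact intervals bounded away from the singularities of $\sqrt{\cdot}$, $\ln(\cdot)$, and $U(\cdot)$, so that Lipschitz (mean-value) estimates apply with constants depending only on $P$ and $Q$. You spell out the mean-value/integral-representation step for $i_{0}$ in two variables where the paper simply invokes that products and compositions of bounded Lipschitz functions are Lipschitz, but the underlying idea is the same.
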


\begin{proof}
Noting that the functions $x\rightarrow \sqrt{x},\ x\rightarrow
\sqrt{1-x},\ x\rightarrow \ln x$ and $x\rightarrow
\overline{U}(x)$ are smooth bounded and Lipschitz functions in any bounded interval $%
[a,b],\ 0<a<b$, and that the composition and multiplication of
bounded Lipschitz functions results in a Lipschitz function, then
the estimates follow. The constant $C$ in (\ref{pre10.1}) depends on
the constants $P$ and $Q$ of (\ref{estK1}).
\end{proof}

\begin{lemma}
\label{lem4}Let us consider $J$ and its approximate $J_{h}$, then
for a.e. $t\in [0,T_{end}]$ there exists a
positive constant $C$ such that%
\begin{equation}
\begin{array}{r}
\left\Vert J-J_{h}\right\Vert _{L^{2}(D_{2})}^{2}\leq C\left\{ \left\Vert
\phi _{2}(t)-\phi _{2h}(t)\right\Vert _{L^{2}(D_{2})}^{2}+\left\Vert \phi
_{1}(t)-\phi _{1h}(t)\right\Vert _{L^{2}(D_{1})}^{2}\right. \\
\\
\left. +\left\Vert u(t)-u_{h}(t)\right\Vert _{L^{2}(D_{1})}^{2}+\left\Vert
v_{s}(t)-v_{sh}(t)\right\Vert _{L^{2}(D_{2})}^{2}\right\} .%
\end{array}
\label{pre14}
\end{equation}
\end{lemma}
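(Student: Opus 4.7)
The plan is to write $J - J_h$ as an additive perturbation, isolate each nonlinear dependence, and invoke the Lipschitz estimates already collected in Lemma \ref{lem3} together with the uniform bounds on $u,u_h,v_s,v_{sh},\phi_i,\phi_{ih}$ granted by assumption \textbf{A4}. Concretely, I would first write
\begin{equation*}
J - J_h = a_2(x)\bigl[\,i_0-i_{0h}\bigr]\sinh(\beta\eta) + a_2(x)\,i_{0h}\bigl[\sinh(\beta\eta)-\sinh(\beta\eta_h)\bigr],
\end{equation*}
and observe that $a_2$ is bounded (piecewise constant), $\sinh(\beta\eta)$ is bounded in $L^\infty(D_2\times[0,T_{\mathrm{end}}])$ because $\eta$ is (by \textbf{A4} and the boundedness of $\overline U$ on $S_Q$), and similarly $i_{0h}$ is bounded since $u_h\in S_P$ and $v_{sh}/v_{\max}\in S_Q^\ast$. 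On the range covered by $\beta\eta$ and $\beta\eta_h$, the function $\sinh$ is Lipschitz with a constant depending only on $K$, so the second term is controlled pointwise by $C|\eta-\eta_h|$.

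Next I would expand $\eta-\eta_h$ using the defining relation (\ref{F1}) and (\ref{num6}):
\begin{equation*}
\eta - \eta_h = (\phi_2-\phi_{2h}) - (\phi_1-\phi_{1h}) - \bigl(\alpha(u)\ln u - \alpha(u_h)\ln u_h\bigr) - \bigl(\overline U(v_s)-\overline U(v_{sh})\bigr).
\end{equation*}
The third term is Lipschitz in $u$ because $\alpha(u)=\frac{2RT}{F}(t_+^0-1)\kappa(u)$ with $\kappa\in C^2$, and because $u,u_h\in[1/P,P]$ make $u\mapsto\kappa(u)\ln u$ smooth with bounded derivative. The fourth term is bounded by $C|v_s-v_{sh}|$ by Lemma \ref{lem3}. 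Combining these with the first term of the decomposition of $J-J_h$, to which Lemma \ref{lem3} gives $|i_0-i_{0h}|\le C(|u-u_h|+|v_s-v_{sh}|)$, I obtain the pointwise estimate
\begin{equation*}
|J-J_h|\le C\bigl(|u-u_h|+|v_s-v_{sh}|+|\phi_1-\phi_{1h}|+|\phi_2-\phi_{2h}|\bigr),
\end{equation*}
valid for a.e.\ $(x,t)\in D_2\times[0,T_{\mathrm{end}}]$, with $C=C(K,P,Q,\|a_2\|_\infty,\|\kappa\|_{C^2})$.

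Squaring, applying the elementary $(a+b+c+d)^2\le 4(a^2+b^2+c^2+d^2)$, integrating over $D_2$, and using $D_2\subset D_1$ to bound $\|u-u_h\|_{L^2(D_2)}\le \|u-u_h\|_{L^2(D_1)}$ and similarly for $\phi_1-\phi_{1h}$, gives the stated inequality (\ref{pre14}). I do not anticipate a serious obstacle; the only place that requires a little care is checking that the composite map $u\mapsto \alpha(u)\ln u$ is genuinely Lipschitz on $[1/P,P]$, which reduces to the $C^2$-regularity of $\kappa$ in \textbf{A2} and the fact that $u$ is bounded away from $0$ and $\infty$ via \textbf{A4}. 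All remaining estimates are direct applications of mean-value-type bounds on the already-smooth functions $\sqrt{\cdot}$, $\sqrt{1-\cdot}$, $\ln(\cdot)$, $\sinh(\cdot)$ and $\overline U(\cdot)$.
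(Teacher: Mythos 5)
Your proof is correct and follows essentially the same route as the paper: decompose $J-J_h$ via the triangle inequality, bound $i_{0h}$, $\sinh(\beta\eta_h)$, $a_2$ using \textbf{A4}, apply a mean-value/Lipschitz bound to $\sinh$, expand $\eta-\eta_h$, invoke Lemma~\ref{lem3}, and then square, sum, and integrate. The one place you are actually a bit more careful than the paper is the term $\alpha(u)\ln u-\alpha(u_h)\ln u_h$: since $\alpha=\alpha(u)$ depends on $u$ through $\kappa(u)$, Lemma~\ref{lem3}'s estimate $|\ln u-\ln u_h|\le C|u-u_h|$ does not literally cover it, and your explicit verification that $u\mapsto\alpha(u)\ln u$ is Lipschitz on $[1/P,P]$ (via $\kappa\in C^2$ from \textbf{A2}) supplies a step the paper leaves implicit.
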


\begin{proof}
Recalling the expressions for $J$, see (\ref{i0})-(\ref{F}), and
$J_{h}$, see (\ref{num6}), using the assumption \textbf{A4} and the
bounds (\ref{estK1}) and (\ref{estK2}),
we have that for all $(x,t)\in D_{2}\times \lbrack 0,T_{%
\mathrm{end}}]$%
\begin{equation*}
\begin{array}{l}
\left\vert J-J_{h}\right\vert =\left\vert a_{2}i_{0}\sinh (\beta \eta
)-a_{2}i_{0h}\sinh (\beta \eta _{h})\right\vert \\
\\
\leq \left\vert a_{2}i_{0}\left( \sinh (\beta \eta )-\sinh (\beta \eta
_{h})\right) \right\vert +\left\vert a_{2}\left( i_{0}-i_{0h}\right) \sinh
(\beta \eta _{h})\right\vert \\
\\
\leq C\left\vert \sinh (\beta \eta )-\sinh (\beta \eta
_{h})\right\vert +\left\Vert a_{2}\sinh (\beta \eta _{h})\right\Vert
_{L^{\infty }(D_{2}\times [0,T_{end}])}\left\vert
i_{0}-i_{0h}\right\vert \\
\\
\leq C\left\vert \sinh (\beta \eta )-\sinh (\beta \eta
_{h})\right\vert +C\left\vert i_{0}-i_{0h}\right\vert.
\end{array}%
\end{equation*}
where due to the bounds (\ref{estK1}) and (\ref{estK2}) the
constants $C=C(P,Q,K)$. Now, by virtue of the mean value theorem
there exists $z\in (\eta ,\eta _{h})$
such that%
\begin{equation*}
\left\vert \sinh (\beta \eta )-\sinh (\beta \eta _{h})\right\vert
\leq \beta \left\vert \cosh (z)\right\vert \left\vert \eta -\eta
_{h}\right\vert \leq C\left\vert \eta -\eta _{h}\right\vert,
\end{equation*}%
and resorting again to (\ref{estK1}) and (\ref{estK2}) it follows
that
\begin{equation*}
\left\Vert z\right\Vert _{L^{\infty }(D_{2}\times \lbrack 0,T_{\mathrm{end}%
}])}\leq \left\Vert \eta \right\Vert _{L^{\infty }(D_{2}\times \lbrack 0,T_{%
\mathrm{end}}])}+\left\Vert \eta _{h}\right\Vert _{L^{\infty }(D_{2}\times
\lbrack 0,T_{\mathrm{end}}])}\leq C
\end{equation*}%
Hence, applying Lemma \ref{lem3} yields
\begin{equation*}
\left\vert J-J_{h}\right\vert \leq C\left( \left\vert \phi _{2}-\phi
_{2h}\right\vert +\left\vert \phi _{1}-\phi _{1h}\right\vert +\left\vert
u-u_{h}\right\vert +\left\vert v_{s}-v_{sh}\right\vert \right)
\end{equation*}%
From this estimate it follows (\ref{pre14}).
\end{proof}

\subsection{Error estimates for the potentials}

To estimate the error for the potentials $\phi _{1}$ and $\phi _{2}$
is convenient to introduce the spaces $V:=H^{1}(D_{1})\times
H^{1}(D_{2}):=\left\{ w=\left( w_{1},w_{2}\right) :w_{1}\in
H^{1}(D_{1}),\
w_{2}\in H^{1}(D_{2})\right\} $ and $V_{h}\subset V$, where $%
V_{h}:=V_{h}^{(1)}(\overline{D}_{1})\times V_{h}^{(1)}(\overline{D}_{2})$. $%
V $ is a Hilbert space with norm
\begin{equation*}
\left\Vert w\right\Vert _{V}=\left( \left\Vert w_{1}\right\Vert
_{H^{1}(D_{1})}^{2}+\left\Vert w_{2}\right\Vert _{H^{1}(D_{2})}^{2}\right)
^{1/2},
\end{equation*}%
and seminorm%
\begin{equation*}
\left\vert v\right\vert _{V}=\left( \left\vert w_{1}\right\vert
_{H^{1}(D_{1})}^{2}+\left\vert w_{2}\right\vert _{H^{1}(D_{2})}^{2}\right)
^{1/2}.
\end{equation*}%
Considering the bilinear forms $a_{1}:H^{1}(D_{1})\times
H^{1}(D_{1})\rightarrow \mathbb{R}$ and $a_{2}:H^{1}(D_{2})\times
H^{1}(D_{2})\rightarrow \mathbb{R}$,
\begin{equation*}
\left\{
\begin{array}{l}
a_{1}(\phi _{1},\psi _{1})=\displaystyle\int_{D_{1}}\kappa
(u)\frac{d\phi _{1}}{dx}\frac{d\psi _{1}}{dx}dx,\ \  \\
\\
a_{2}(\phi _{2},\psi _{2})=\displaystyle\int_{D_{2}}\sigma \frac{d\phi _{2}}{dx}\frac{d\psi _{2}}{dx}dx,%
\end{array}%
\right.
\end{equation*}%
we can define the bilinear form $a:V\times V\rightarrow \mathbb{R}$
as follows. Let $\Phi $ and $\Psi $ $\in V$, $\Phi =(\phi _{1},\phi
_{2})$ and
$\Psi =(\psi _{1},\psi _{2})$, then%
\begin{equation*}
a(\Phi ,\Psi )=a_{1}(\phi _{1},\psi _{1})+a_{2}(\phi _{2},\psi _{2}).
\end{equation*}%
Furthermore, concerning the right hand side terms of (\ref{w3}) and (\ref{w4}%
), we introduce the operator $B:V\rightarrow V^{\ast }$, $V^{\ast }$ being
the dual for $V$, as%
\begin{equation*}
\left\langle B(\Phi ),\Psi \right\rangle =\int_{D_{2}}J\left( \psi _{2}-\psi
_{1}\right) dx.
\end{equation*}%
Hence, we can recast the equations (\ref{w3}) and (\ref{w4}) as follows.
Find $\Phi \in L^{2}(0,T_{end}; W(D_{1})\times H^{1}(D_{2}))$ such that%
\begin{equation}
a(\Phi ,\Psi )+\left\langle B(\Phi ),\Psi \right\rangle =-\int_{D_{2}}g\psi
_{2}dx\ \ \forall \Psi \in V.  \label{pre15}
\end{equation}%
Likewise, the finite element solutions $\phi _{1h}$ and $\phi _{2h}$
that satisfy (\ref{num3}) and (\ref{num4}) respectively, can be
formulated as follows. For all $t\in [0,T_{end}]$, find $\Phi _{h}\in W_{h}(\overline{D}_{1})\times V_{h}^{(1)}(%
\overline{D}_{2})$ such that%
\begin{equation}
a_{h}(\Phi _{h},\Psi _{h})+\left\langle B_{h}(\Phi _{h}),\Psi
_{h}\right\rangle =-\int_{D_{2}}g\psi _{2h}dx\ \ \forall \Psi _{h}\in V_{h},
\label{eep1}
\end{equation}%
where%
\begin{equation}
a_{h}(\Phi _{h},\Psi _{h})=a_{1h}(\phi _{1h},\psi _{1h})+a_{2}(\phi
_{2h},\psi _{2h}),  \label{eep1_1}
\end{equation}%
with%
\begin{equation*}
a_{1h}(\phi _{1h},\psi _{1h})=\int_{D_{1}}\kappa (u_{h})\frac{d\phi
_{1h}}{dx}\frac{d\psi _{1h}}{dx}dx,
\end{equation*}%
and%
\begin{equation}
\left\langle B_{h}(\Phi _{h}),\Psi _{h}\right\rangle
=\int_{D_{2}}J_{h}\left( \psi _{2h}-\psi _{1h}\right) dx.  \label{eep1_2}
\end{equation}

\begin{remark} \label{solpots}

The existence and uniqueness of $\Phi$ is proven in \cite{Diaz}
under the same kind of assumptions as \textbf{A1}-\textbf{A4},
whereas in \cite{WXZ} and \cite{Kro} the existence is proven
applying Schauder Fixed Point Theorem \cite{GT} and the uniqueness
using the fact that $\phi_{1}(x) \in W(D_{1})$.

\end{remark}
As for the bilinear form $a$ and the operator $B$, we have the
following result.

\begin{lemma}
\label{lem5} Assuming that \textbf{A1-A4} hold, we have that: (i)\
the
bilinear form $a$ is continuous, (ii)\ the operator $B$ is monotone, i.e.,%
\begin{equation}
\left\langle B(\Phi )-B(\widehat{\Phi }),\Phi -\widehat{\Phi
}\right\rangle \geq 0,  \label{pre16}
\end{equation}%
bounded and continuous in the sense that for all $\Psi \in V$ there exists a
constant $C$ such that%
\begin{equation}
\left\langle B(\Phi )-B(\widehat{\Phi }),\Psi \right\rangle \leq
C\left( \left\Vert \Phi -\widehat{\Phi }\right\Vert _{V}\right)
\left( \left\Vert \psi _{1}\right\Vert _{L^{2}(D_{1})}+\left\Vert
\psi _{2}\right\Vert _{L^{2}(D_{2})}\right) .  \label{pre17}
\end{equation}
\end{lemma}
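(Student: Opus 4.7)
The plan is to verify the three claims in order, using the structure of $J$ together with the uniform bounds guaranteed by assumption \textbf{A4}. Throughout, I treat $u$ and $v_s$ as fixed data entering $J$; the operator $B$ is a function of $\Phi=(\phi_1,\phi_2)$ alone.

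For continuity of $a$, I would split $a(\Phi,\Psi)=a_1(\phi_1,\psi_1)+a_2(\phi_2,\psi_2)$. By \textbf{A4}, $u(x,t)\in[1/P,P]$, and by \textbf{A2}, $\kappa\in C^2((0,\infty))$, so $\kappa(u)$ is bounded on $\overline{D}_1\times[0,T_{\mathrm{end}}]$. Since $\sigma\in L^\infty(D_2)$ by \textbf{A2}, the Cauchy--Schwarz inequality applied to each term gives $|a_i(\phi_i,\psi_i)|\leq C\,|\phi_i|_{H^1(D_i)}|\psi_i|_{H^1(D_i)}$, and summing yields $|a(\Phi,\Psi)|\leq C\|\Phi\|_V\|\Psi\|_V$.

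For monotonicity of $B$, I would use the explicit form $J(\Phi)=a_2(x)\,i_0(u,v_s)\,\sinh(\beta\eta)$ with $\eta=\phi_2-\phi_1-\alpha\ln u-\overline{U}$. Since $u,v_s$ are held fixed, the dependence of $J$ on $\Phi$ is only through $\eta$, and $\eta-\widehat{\eta}=(\phi_2-\widehat{\phi}_2)-(\phi_1-\widehat{\phi}_1)$. Therefore
\begin{equation*}
\langle B(\Phi)-B(\widehat\Phi),\Phi-\widehat\Phi\rangle
=\int_{D_2}a_2\,i_0\bigl[\sinh(\beta\eta)-\sinh(\beta\widehat\eta)\bigr](\eta-\widehat\eta)\,dx.
\end{equation*}
Because $a_2>0$, $i_0>0$ (by \textbf{A4}), and $\sinh$ is strictly increasing (equivalently $\partial J/\partial\eta>0$ by \textbf{A3}), each factor in the integrand is nonnegative, giving (\ref{pre16}).

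For the bound (\ref{pre17}), I would mimic the argument of Lemma \ref{lem4}. By the mean value theorem applied to $\sinh$ and the uniform $L^\infty$ bounds on $\eta$, $\widehat{\eta}$ provided by \textbf{A4}, one obtains $|J(\Phi)-J(\widehat\Phi)|\leq C|\eta-\widehat\eta|\leq C(|\phi_1-\widehat\phi_1|+|\phi_2-\widehat\phi_2|)$ pointwise. Using $D_2\subset D_1$ and the Cauchy--Schwarz inequality,
\begin{equation*}
|\langle B(\Phi)-B(\widehat\Phi),\Psi\rangle|\leq \|J-\widehat J\|_{L^2(D_2)}\bigl(\|\psi_1\|_{L^2(D_1)}+\|\psi_2\|_{L^2(D_2)}\bigr),
\end{equation*}
and bounding $\|J-\widehat J\|_{L^2(D_2)}\leq C\|\Phi-\widehat\Phi\|_V$ from the pointwise estimate yields (\ref{pre17}).

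There is no substantive obstacle here; the only point that needs care is keeping track of the fact that in the definition of $B$, the data $u,v_s$ are frozen, so the nonlinearity in $\Phi$ reduces to $\sinh$ of a linear functional of $(\phi_1,\phi_2)$. Once this is recognized, monotonicity is immediate from monotonicity of $\sinh$, and the continuity estimate is an application of the mean value theorem under the $L^\infty$ bounds from \textbf{A4}.
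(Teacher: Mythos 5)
Your argument is correct and follows essentially the same route as the paper: split $a$, bound $\kappa(u)$ and $\sigma$ from \textbf{A2}/\textbf{A4} and apply Cauchy--Schwarz; for $B$, observe that the only dependence on $\Phi$ is through $\eta$, that $\eta-\widehat\eta=(\phi_2-\widehat\phi_2)-(\phi_1-\widehat\phi_1)$, and use the structure of $\sinh$. Two small remarks. First, your phrase ``each factor in the integrand is nonnegative'' is loose: the two factors $\sinh(\beta\eta)-\sinh(\beta\widehat\eta)$ and $\eta-\widehat\eta$ are not separately nonnegative, only their product is (precisely because $\sinh$ is increasing) -- your cited reason is right, the wording should be fixed. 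Second, where you settle for $\ge 0$, the paper goes one step further: by the mean value theorem and $\cosh\ge 1$, together with $a_2 i_0 \ge C(P,Q)>0$ (from \textbf{A4}), it obtains the quantitatively stronger lower bound $\langle B(\Phi)-B(\widehat\Phi),\Phi-\widehat\Phi\rangle \ge C\int_{D_2}\bigl((\phi_2-\phi_1)-(\widehat\phi_2-\widehat\phi_1)\bigr)^2dx$, labelled (\ref{pre17.1_1}), which is what is actually used later to derive the coercivity estimate (\ref{eep4}) in Theorem~\ref{Theorem 1}. Your version proves the lemma as literally stated, but if you intend to reuse it downstream you will want the quantitative form. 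The paper also records a separate boundedness inequality $\langle B(\Phi),\Phi\rangle\le C\|\Phi\|_V$ directly from \textbf{A4}, which you omit; it is not strictly needed once (\ref{pre17}) is in hand, but it is part of what the paper's proof establishes.
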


\begin{proof}
It is easy to prove the continuity of the bilinear form $a$ if one takes
into account the regularity assumption\ \textbf{A2}. To prove (\ref{pre16})
we note that%
\begin{equation*}
\left\langle B(\Phi )-B(\widehat{\Phi }),\Phi -\widehat{\Phi }\right\rangle
=\int_{D_{2}}a_{2}i_{0}\left( \sinh (\beta \eta )-\sinh (\beta \widehat{\eta
})\right) \left( \left( \phi _{2}-\widehat{\phi }_{2}\right) -\left( \phi
_{1}-\widehat{\phi }_{1}\right) \right) dx,
\end{equation*}%
where $\eta =\phi _{2}-\phi _{1}-\alpha \ln u-\overline{U}$ and $\widehat{%
\eta }=\widehat{\phi }_{2}-\widehat{\phi }_{1}-\alpha \ln
u-\overline{U}$. Since for all $x\in D_{2}$ $a_{2}i_{0}>0$, then by
virtue of \textbf{A4} we can choose a constant $C(P,Q)$ such that
for all $x\in D_{2}$ $C(P,Q)\leq a_{2}i_{0}$, and by the mean value
theorem $\sinh (\beta \eta )-\sinh (\beta \widehat{\eta })\geq \beta
(\eta -\widehat{\eta })$, then one readily
obtains%
\begin{equation}
\left\langle B(\Phi )-B(\widehat{\Phi }),\Phi -\widehat{\Phi }\right\rangle
\geq C\int_{D_{2}}\left( \left( \phi _{2}-\phi _{1}\right) -\left( \widehat{%
\phi }_{2}-\widehat{\phi }_{1}\right) \right) ^{2}dx\geq 0.
\label{pre17.1_1}
\end{equation}%
To prove that $B$ is bounded we notice that for all $\Phi \in V$%
\begin{equation*}
\left\langle B(\Phi ),\Phi \right\rangle \leq \int_{D_{2}}\left\vert
J\right\vert \times \left\vert \phi _{2}-\phi _{1}\right\vert
dx=\int_{D_{2}}\left\vert a_{2}i_{0}(\sinh (\beta \eta )\right\vert \times
\left\vert \phi _{2}-\phi _{1}\right\vert dx,
\end{equation*}%
but $\left\vert J\right\vert $ is bounded by virtue of \textbf{A4},
then using the Cauchy-Schwarz inequality it readily follows that
there exists a
bounded positive constant $C$ such that%
\begin{equation*}
\left\langle B(\Phi ),\Phi \right\rangle \leq C\left\Vert \Phi \right\Vert
_{V},
\end{equation*}%
so $B$ is bounded. To prove that $B$ is continuous, we again notice that
\begin{equation*}
\left\langle B(\Phi )-B(\widehat{\Phi }),\Psi \right\rangle \leq
\int_{D_{2}}\left\vert a_{2}i_{0}(\sinh (\beta \eta )-\sinh (\beta \widehat{%
\eta }))\right\vert \times \left\vert \psi _{2}-\psi _{1}\right\vert dx,
\end{equation*}%
so, arguing as in the proof of Lemma \ref{lem4} we have that there exists a
positive constant $C$ such that%
\begin{equation*}
\left\vert a_{2}i_{0}(\sinh (\beta \eta )-\sinh (\beta \widehat{\eta }%
))\right\vert \leq C\left\vert \eta -\widehat{\eta }\right\vert =C\left\vert
\left( \phi _{2}-\widehat{\phi }_{2}\right) -\left( \phi _{1}-\widehat{\phi }%
_{1}\right) \right\vert .
\end{equation*}%
Substituting this estimate in the above inequality and making use of the
Cauchy-Schwarz inequality it follows that%
\begin{equation*}
\begin{array}{c}
\left\langle B(\Phi )-B(\widehat{\Phi }),\Psi \right\rangle \leq C\left(
\left\Vert \phi _{2}-\widehat{\phi }_{2}\right\Vert
_{L^{2}(D_{2})}+\left\Vert \phi _{1}-\widehat{\phi }_{1}\right\Vert
_{L^{2}(D_{1})}\right) \left( \left\Vert \psi _{2}\right\Vert
_{L^{2}(D_{2})}+\left\Vert \psi _{1}\right\Vert _{L^{2}(D1)}\right) \\
\\
\leq C\left\Vert \Phi -\widehat{\Phi }\right\Vert _{V}\left( \left\Vert \psi
_{2}\right\Vert _{L^{2}(D_{2})}+\left\Vert \psi _{1}\right\Vert
_{L^{2}(D1)}\right) .%
\end{array}%
\end{equation*}
\end{proof}

\begin{corollary}
\label{cor1} The discrete bilinear form $a_{h}$ defined in (\ref{eep1_1}) is
continuous. The discrete operator $B_{h}$ defined in (\ref{eep1_2}) is
monotone, bounded and continuous.
\end{corollary}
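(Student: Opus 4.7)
The plan is to mirror the proof of Lemma \ref{lem5} almost verbatim, since the only structural differences between $(a, B)$ and $(a_h, B_h)$ are that $\kappa(u)$ is replaced by $\kappa(u_h)$ and $J$ by $J_h$, and assumption \textbf{A4} gives the same uniform bounds on the discrete quantities $u_h$, $v_{sh}$, $\phi_{ih}$ that were used to bound $u$, $v_s$, $\phi_i$.

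First, for continuity of $a_h$, I would argue as follows. The discrete bilinear form is a sum $a_{1h}+a_2$; since $a_2$ is the same bilinear form that appeared in Lemma \ref{lem5}, its continuity is already known. For $a_{1h}$, assumption \textbf{A2} gives $\kappa \in C^2((0,+\infty))$ and \textbf{A4} gives $\tfrac{1}{P}\leq u_h \leq P$, so $\kappa(u_h)$ is uniformly bounded by a constant $C(P)$. A Cauchy--Schwarz application then yields $|a_{1h}(\phi_{1h},\psi_{1h})|\leq C(P)\,|\phi_{1h}|_{H^1(D_1)}|\psi_{1h}|_{H^1(D_1)}$, which combined with the bound on $a_2$ gives $|a_h(\Phi_h,\Psi_h)|\leq C\|\Phi_h\|_V\|\Psi_h\|_V$.

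Next, for the properties of $B_h$, I would note that
\[
\langle B_h(\Phi_h)-B_h(\widehat\Phi_h),\Psi_h\rangle = \int_{D_2} a_2(x)\bigl(i_{0h}\sinh(\beta\eta_h)-\widehat i_{0h}\sinh(\beta\widehat\eta_h)\bigr)(\psi_{2h}-\psi_{1h})\,dx,
\]
where $\eta_h = \phi_{2h}-\phi_{1h}-\alpha_h\ln u_h-\overline U_h$ depends only on $(\phi_{1h},\phi_{2h})$ when $u_h$ and $v_{sh}$ are fixed. For monotonicity, taking $\Psi_h=\Phi_h-\widehat\Phi_h$ with the same $u_h, v_{sh}$ in both $\Phi_h$ and $\widehat\Phi_h$ (so that $i_{0h}$ and the $\alpha_h\ln u_h+\overline U_h$ terms coincide), the convexity/monotonicity argument used after (\ref{pre17.1_1}) applies: $a_2 i_{0h}\geq C(P,Q)>0$ by \textbf{A4} and $(\sinh(\beta\eta_h)-\sinh(\beta\widehat\eta_h))(\eta_h-\widehat\eta_h)\geq \beta(\eta_h-\widehat\eta_h)^2\geq 0$, giving (\ref{pre16}) with $B$ replaced by $B_h$.

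Finally, for boundedness and continuity of $B_h$, \textbf{A4} together with Lemma \ref{lem3} (which is stated pointwise and applies equally to the discrete variables, since the Lipschitz constants only depend on $P,Q$) yields $\|J_h\|_{L^\infty(D_2\times[0,T_{\mathrm{end}}])}\leq C$, so Cauchy--Schwarz gives $|\langle B_h(\Phi_h),\Phi_h\rangle|\leq C\|\Phi_h\|_V$; and the mean value theorem applied to $\sinh(\beta\cdot)$ in the range of $\eta_h,\widehat\eta_h$ (which is $L^\infty$-bounded thanks to (\ref{estK1})--(\ref{estK2})) together with Lemma \ref{lem3} produces the discrete analogue of (\ref{pre17}). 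There is no genuine obstacle here: the whole content of the corollary is that assumption \textbf{A4} is tailored precisely so that every estimate in the proof of Lemma \ref{lem5} transfers to the discrete setting with the same constants, so the proof is a routine rewriting of the continuous argument and could, if desired, be omitted.
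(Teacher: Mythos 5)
Your proposal is correct and matches the paper's intent: the paper states Corollary~\ref{cor1} without proof, treating it as an immediate consequence of the argument in Lemma~\ref{lem5}, and you have correctly identified that assumption \textbf{A4} is precisely what lets every estimate in that proof (boundedness of $\kappa(u_h)$, $i_{0h}$, $\sinh(\beta\eta_h)$, the mean-value-theorem step, the use of Lemma~\ref{lem3}) transfer verbatim to the discrete quantities. Your observation that $B_h$ is to be read as an operator in $\Phi_h$ with $u_h,v_{sh}$ held fixed is the right reading and is the one the paper uses later in Lemma~\ref{lem9}.
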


\begin{theorem}
\label{Theorem 1} For a.e. $t\in \lbrack 0,T_{\mathrm{end}}]$, let
the solution of (\ref{pre15}), $\Phi (t)=\left( \phi _{1}(t),\phi
_{2}(t)\right)$, be in $H^{2}(D_{1})\times H^{2}(D_{2})$. There
exists a constant $C$
independent of $h$ such that%
\begin{equation}
\begin{array}{c}
\left\Vert \Phi (t)-\Phi _{h}(t)\right\Vert _{V}^{2}\leq C\left(
h^{2}(\left\Vert \phi _{1}(t)\right\Vert _{H^{2}(D_{1})}^{2}+\left\Vert \phi
_{2}(t)\right\Vert _{H^{2}(D_{2})}^{2})\right. \\
\\
\left. +\left\Vert u(t)-u_{h}(t)\right\Vert _{L^{2}(D_{1})}^{2}+\left\Vert
v_{s}(t)-v_{sh}(t)\right\Vert _{L^{2}(D_{2})}^{2}\right) .%
\end{array}
\label{eep}
\end{equation}
\end{theorem}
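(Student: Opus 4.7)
The plan is to carry out the standard Galerkin-type error analysis for a nonlinear monotone elliptic system, leveraging the continuity of $a$, the monotonicity of $B$ established in Lemma \ref{lem5}, and the Lipschitz-in-data bound of Lemma \ref{lem4}. I would first decompose the error as $\Phi-\Phi_{h}=(\Phi-\Pi_{h}\Phi)+(\Pi_{h}\Phi-\Phi_{h})=:\rho_{h}+\theta_{h}$, where $\Pi_{h}$ is the componentwise $I_{h}^{(1)}$-interpolant onto $V_{h}$ (corrected by a constant in the first component so that $\Pi_{h}\phi_{1}\in W_{h}(\overline{D}_{1})$), so that $\theta_{h}\in V_{h}$. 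The interpolation part $\|\rho_{h}\|_{V}$ is controlled directly by (\ref{pre4.1}) with $m=2$, producing the $h^{2}(\|\phi_{1}\|_{H^{2}(D_{1})}^{2}+\|\phi_{2}\|_{H^{2}(D_{2})}^{2})$ contribution in (\ref{eep}). It remains to estimate $\|\theta_{h}\|_{V}$.

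Subtracting (\ref{eep1}) from (\ref{pre15}) with the common test function $\Psi=\Psi_{h}=\theta_{h}\in V_{h}\subset V$ (the right-hand side $-\int_{D_{2}}g\psi_{2h}\,dx$ cancels) and inserting $\pm a(\Phi_{h},\theta_{h})$ and $\pm\langle B(\Phi_{h}),\theta_{h}\rangle$ yields the error identity
\[
a(\Phi-\Phi_{h},\theta_{h})+\langle B(\Phi)-B(\Phi_{h}),\theta_{h}\rangle=\int_{D_{1}}\bigl(\kappa(u)-\kappa(u_{h})\bigr)\frac{d\phi_{1h}}{dx}\frac{d\theta_{1h}}{dx}\,dx+\langle B_{h}(\Phi_{h})-B(\Phi_{h}),\theta_{h}\rangle.
\]
Writing $\Phi-\Phi_{h}=\rho_{h}+\theta_{h}$ on the left, I place $a(\theta_{h},\theta_{h})$ together with $\langle B(\Phi_{h}+\theta_{h}+\rho_{h})-B(\Phi_{h}+\rho_{h}),\theta_{h}\rangle$ (which is $\ge 0$ by monotonicity (\ref{pre16})) on one side and move everything else to the right. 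Coercivity of $a$ on $W(D_{1})\times H^{1}(D_{2})$ (from $\kappa\ge\kappa_{0}$, $\sigma\ge\sigma_{0}$, Poincaré–Wirtinger on $W(D_{1})$, and the $L^{2}(D_{2})$ control of $\phi_{2}-\phi_{1}$ provided by (\ref{pre17.1_1}) to recover the missing $L^{2}$ piece of $\phi_{2}$) then gives $\|\theta_{h}\|_{V}^{2}$ on the left.

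The right-hand side terms are bounded as follows. The cross term $a(\rho_{h},\theta_{h})$ is handled by Cauchy–Schwarz together with (\ref{pre4.1}) and Young's inequality, contributing the $h^{2}\|\phi_{i}\|_{H^{2}}^{2}$ terms. For the remainder $\langle B(\Phi)-B(\Phi_{h}),\theta_{h}\rangle-\langle B(\Phi_{h}+\theta_{h}+\rho_{h})-B(\Phi_{h}+\rho_{h}),\theta_{h}\rangle$ and the defect $\langle B_{h}(\Phi_{h})-B(\Phi_{h}),\theta_{h}\rangle$, I invoke Lemma \ref{lem4}, which after Cauchy–Schwarz provides bounds proportional to $\|u-u_{h}\|_{L^{2}(D_{1})}$, $\|v_{s}-v_{sh}\|_{L^{2}(D_{2})}$, and $\|\rho_{h}\|_{V}$, times $\|\theta_{h}\|_{V}$. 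Finally Young's inequality absorbs the $\|\theta_{h}\|_{V}^{2}$ pieces on the right into the coercive term on the left, producing the estimate for $\|\theta_{h}\|_{V}^{2}$ with exactly the data appearing on the right-hand side of (\ref{eep}); a last application of the triangle inequality $\|\Phi-\Phi_{h}\|_{V}\le\|\rho_{h}\|_{V}+\|\theta_{h}\|_{V}$ completes the proof.

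The main obstacle I expect is the term $\int_{D_{1}}(\kappa(u)-\kappa(u_{h}))\phi_{1h}'\theta_{1h}'\,dx$: since $\kappa\in C^{2}$ is only locally Lipschitz and $|u-u_{h}|$ is controlled in $L^{2}$, I cannot afford an $L^{2}$ bound on $\phi_{1h}'\theta_{1h}'$ alone. The way around this is to split $\phi_{1h}'=\phi_{1}'+(\phi_{1h}'-\phi_{1}')$; the first piece uses $\|\phi_{1}'\|_{L^{\infty}(D_{1})}\le K$ from \textbf{A4} directly, while for the second piece I use $|\kappa(u)-\kappa(u_{h})|\le C$ (bounded on the admissible range of $u,u_{h}$ by \textbf{A4}) and $\|\phi_{1h}'-\phi_{1}'\|_{L^{2}(D_{1})}\le\|\rho_{h}\|_{V}+\|\theta_{h}\|_{V}$, again absorbing via Young's inequality. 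This delicate balance between Lipschitz bounds and $L^{\infty}$ regularity of the true potential, together with the need to extract genuine coercivity on $V$ from the combination of the diffusive and monotone parts, is the main technical point of the proof.
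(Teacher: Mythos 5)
Your overall plan (subtract the continuous from the discrete equation, decompose the error into an interpolation part $\rho_h$ and a discrete part $\theta_h$, use the monotonicity of $B$ together with the Lipschitz-type bound from Lemma~\ref{lem4} and Young's inequality to absorb what can be absorbed) is essentially the strategy the paper follows; the paper just works directly with $\Psi_h=I_h^{(1)}\Phi-\Phi_h$ split as $(\Phi-\Phi_h)+(I_h^{(1)}\Phi-\Phi)$ and gets coercivity for $\Phi-\Phi_h$ rather than for $\theta_h$. However, there is a genuine gap precisely at the point you flag as the ``main obstacle.'' When you write the error identity with $a$ (i.e.\ with $\kappa(u)$) on the left, the consistency defect term necessarily carries $\displaystyle\frac{d\phi_{1h}}{dx}$ rather than $\displaystyle\frac{\partial\phi_1}{\partial x}$, and your proposed splitting $\phi_{1h}'=\phi_1'+(\phi_{1h}'-\phi_1')$ does not close. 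The piece $\int_{D_1}\bigl(\kappa(u)-\kappa(u_h)\bigr)\bigl(\phi_{1h}'-\phi_1'\bigr)\theta_{1h}'\,dx$ contains, after expanding $\phi_{1h}'-\phi_1'=-(\rho_{1h}'+\theta_{1h}')$, the term $\int_{D_1}\bigl(\kappa(u_h)-\kappa(u)\bigr)|\theta_{1h}'|^2\,dx$, which is bounded only by $C\|\theta_{1h}'\|_{L^2(D_1)}^2$ with a constant $C$ that depends on $P$ and $\|\kappa\|_{L^\infty([1/P,P])}$ and is \emph{not} at your disposal to make small; it cannot be absorbed by the coercive contribution $\kappa_0\|\theta_{1h}'\|_{L^2(D_1)}^2$. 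Using the Lipschitz bound $|\kappa(u)-\kappa(u_h)|\le C|u-u_h|$ instead does not help, since you would then need $\|u-u_h\|_{L^\infty(D_1)}$ small, which is not available at this stage.

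The fix, and the device the paper actually uses, is to reorganize the identity so that the $\kappa$-difference term is attached to the \emph{exact} potential: instead of comparing $a(\Phi,\cdot)$ with $a(\Phi_h,\cdot)$, compare with the \emph{discrete} form $a_h$ throughout, writing $a(\Phi,\Psi_h)-a_h(\Phi_h,\Psi_h)=a_h(\Phi-\Phi_h,\Psi_h)+\int_{D_1}(\kappa(u)-\kappa(u_h))\frac{\partial\phi_1}{\partial x}\frac{d\psi_{1h}}{dx}\,dx$. Coercivity of $a_h$ still follows from \textbf{A2} and Poincar\'e--Wirtinger because $\kappa(u_h)\ge\kappa_0$, while the $\kappa$-difference term now involves $\frac{\partial\phi_1}{\partial x}$, whose $L^\infty$ bound is supplied by \textbf{A4}; then Cauchy--Schwarz and Young give $\epsilon\|\Phi-\Phi_h\|_V^2+C(\epsilon)\bigl(\|u-u_h\|_{L^2(D_1)}^2+\|\Phi-I_h^{(1)}\Phi\|_V^2\bigr)$, exactly as in the paper's estimate (\ref{eep8}). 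With that regrouping the rest of your argument (monotonicity of $B$, Lemma~\ref{lem3}/\ref{lem4} bounds, interpolation via (\ref{pre4.1}), triangle inequality) goes through and recovers (\ref{eep}).
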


\begin{proof}
Setting $\Psi =\Psi _{h}$ in (\ref{pre15}) and subtracting (\ref{eep1})
yields%
\begin{equation*}
a(\Phi ,\Psi _{h})-a_{h}(\Phi _{h},\Psi _{h})+\left\langle B(\Phi )-B\left(
\Phi _{h}\right) ,\Psi _{h}\right\rangle =-\left\langle B(\Phi
_{h})-B_{h}\left( \Phi _{h}\right) ,\Psi _{h}\right\rangle .
\end{equation*}%
Noting that%
\begin{equation*}
a(\Phi ,\Psi _{h})-a_{h}(\Phi _{h},\Psi _{h})=a_{h}(\Phi -\Phi
_{h},\Psi _{h})+\int_{D_{1}}\left( \kappa (u)-\kappa (u_{h})\right)
\frac{\partial \phi _{1}}{\partial x}\frac{d\psi _{1h}}{dx}dx,
\end{equation*}%
it follows that%
\begin{equation}
\begin{array}{r}
a_{h}(\Phi -\Phi _{h},\Psi _{h})+\left\langle B(\Phi )-B\left( \Phi
_{h}\right) ,\Psi _{h}\right\rangle =\left\langle B_{h}(\Phi
_{h})-B\left( \Phi
_{h}\right) ,\Psi _{h}\right\rangle \\
\\
-\displaystyle\int_{D_{1}}\left( \kappa (u)-\kappa (u_{h})\right) \frac{\partial \phi _{1}}{\partial x}\frac{d\psi _{1h}}{dx}dx.%
\end{array}
\label{eep2}
\end{equation}%
To estimate the terms of this expression we choose $\Psi
_{h}=I_{h}^{(1)}\Phi -\Phi _{h}=$ $\left( I_{h}^{(1)}\phi _{1}-\phi
_{1h},I_{h}^{(1)}\phi _{2}-\phi _{2h}\right) $, $I_{h}^{(1)}$ being
the Lagrange interpolant on
$V_{h}=V_{h}^{(1)}(\overline{D}_{1})\times
V_{h}^{(1)}(\overline{D}_{2})$, this means that $ I_{h}^{(1)}\phi
_{1}\in V_{h}^{(1)}(\overline{D}_{1})$ and $I_{h}^{(1)}\phi _{2}\in
V_{h}^{(1)}(\overline{D}_{2})$. For convenience, we shall split the
expression for $\Psi _{h}$ as
\begin{equation*}
\Psi _{h}=\left( \Phi -\Phi _{h}\right) +\left( I_{h}^{(1)}\Phi
-\Phi \right) .
\end{equation*}
Replacing this expression for $\Psi _{h}$ in (\ref{eep2}) we have that
\begin{equation}
\left\{
\begin{array}{l}
a_{h}(\Phi -\Phi _{h},\Phi -\Phi _{h})+\left\langle B(\Phi )-B\left(
\Phi
_{h}\right) ,\Phi -\Phi _{h}\right\rangle \\
\\
=a_{h}(\Phi -\Phi _{h},\Phi -I_{h}^{(1)}\Phi )+\left\langle B(\Phi
)-B\left( \Phi
_{h}\right) ,\Phi -I_{h}^{(1)}\Phi \right\rangle \\
\\
+\left\langle B_{h}(\Phi _{h})-B\left( \Phi _{h}\right) ,\left( \Phi -\Phi
_{h}\right) +\left( I_{h}^{(1)}\Phi -\Phi \right) \right\rangle \\
\\
+\displaystyle\int_{D_{1}}\left( \kappa (u)-\kappa (u_{h})\right)
\frac{\partial \phi _{1}}{\partial x}\frac{\partial \left( \left(
\phi _{1}-\phi
_{1h}\right) +\left( I_{h}^{(1)}\phi _{1}-\phi _{1h}\right) \right) }{\partial x}%
dx.%
\end{array}%
\right.  \label{eep3}
\end{equation}%
We bound the terms of (\ref{eep3}). We start by showing that there
exists a positive constant $\alpha$ such that the term on the left
hand side satisfies
\begin{equation}
a_{h}(\Phi -\Phi _{h},\Phi -\Phi _{h})+\left\langle B(\Phi )-B\left(
\Phi _{h}\right) ,\Phi -\Phi _{h}\right\rangle \geq \alpha
\left\Vert \Phi -\Phi _{h}\right\Vert _{V}^{2}. \label{eep4}
\end{equation}
To do so we note that by virtue of (\ref{pre17.1_1})%
\begin{equation}
\begin{array}{l}
a_{h}(\Phi -\Phi _{h},\Phi -\Phi _{h})+\left\langle B(\Phi )-B\left(
\Phi
_{h}\right) ,\Phi -\Phi _{h}\right\rangle \\
\\
\geq a_{1h}(\phi _{1}-\phi _{1h},\phi _{1}-\phi _{1h})+a_{2}(\phi
_{2}-\phi
_{2h},\phi _{2}-\phi _{2h}) \\
\\
+C\displaystyle\int_{D_{2}}\left( \left( \phi _{2}-\phi _{1}\right)
-\left( \phi
_{2h}-\phi _{1h}\right) \right) ^{2}dx.%
\end{array}
\label{eep3.1}
\end{equation}%
Since $\phi _{1}-\phi _{1h}\in W(D_{1})$, we can use \textbf{A2 }and
Poincar\'{e}-Wirtinger inequality to bound $a_{1}(\phi _{1}-\phi
_{1h},\phi _{1}-\phi _{1h})$ from below as
\begin{equation*}
a_{1h}(\phi _{1}-\phi _{1h},\phi _{1}-\phi _{1h})\geq
c_{1}\left\Vert \phi _{1}-\phi _{1h}\right\Vert _{H^{1}(D_{1})}^{2},
\end{equation*}%
where the constant $c_{1}=\kappa _{0}\left( 1+C_{P}\right) ^{-1}$,
$C_{P}$
being the constant of the Poincar\'{e}-Wirtinger inequality; using again \textbf{%
A2}, we bound\textbf{\ }the term $a_{2}(\phi _{2}-\phi _{2h},\phi _{2}-\phi
_{2h})$ as
\begin{equation*}
a_{2}(\phi _{2}-\phi _{2h},\phi _{2}-\phi _{2h})\geq \sigma _{0}\left\vert
\phi _{2}-\phi _{2h}\right\vert _{H^{1}(D_{2})}^{2}.
\end{equation*}%
Applying Young inequality we find that there exists a constant
$\gamma \in \left( 0,1\right) $ such that
\begin{equation*}
\begin{array}{l}
C\displaystyle\int_{D_{2}}\left( \left( \phi _{2}-\phi _{1}\right)
-\left( \phi
_{2h}-\phi _{1h}\right) \right) ^{2}dx. \\
\\
\geq C\left[ (1-\frac{4}{\gamma })\left\Vert \phi _{1}-\phi
_{1h}\right\Vert _{L^{2}(D_{2})}^{2}+(1-\gamma )\left\Vert \phi
_{2}-\phi _{2h}\right\Vert
_{L^{2}(D_{2})}^{2}\right] \\
\\
\geq C\left[ (1-\frac{4}{\gamma })\left\Vert \phi _{1}-\phi
_{1h}\right\Vert _{L^{2}(D_{1})}^{2}+(1-\gamma )\left\Vert \phi
_{2}-\phi _{2h}\right\Vert
_{L^{2}(D_{2})}^{2}\right].%
\end{array}%
\end{equation*}%
Now, we can choose the constants $C$ and $\gamma $ such that $%
c_{2}=c_{1}+C(1-\frac{4}{\gamma })>0$, and substitute these bounds in (\ref%
{eep3.1}) to obtain the inequality (\ref{eep4}), where $\alpha =\min
(c_{1},c_{2},\sigma _{0},(1-\gamma )C)$. Next, we bound the terms on
the right hand side. By continuity of the bilinear form and
Young inequality, we find that there exists a small positive number $%
\epsilon _{1}$ and a constant $C(\epsilon _{1})$ such that%
\begin{equation}
\begin{array}{r}
a_{h}(\Phi -\Phi _{h},\Phi -I_{h}^{(1)}\Phi )\leq C\left\Vert \Phi
-\Phi
_{h}\right\Vert _{V}\left\Vert \Phi -I_{h}^{(1)}\Phi \right\Vert _{V} \\
\\
\leq \epsilon _{1}\left\Vert \Phi -\Phi _{h}\right\Vert _{V}^{2}+C(\epsilon
_{1})\left\Vert \Phi -I_{h}^{(1)}\Phi \right\Vert _{V}^{2}.%
\end{array}
\label{eep5}
\end{equation}%
To bound $\left\langle B(\Phi )-B\left( \Phi _{h}\right) ,\Phi
-I_{h}^{(1)}\Phi \right\rangle $ we note that
\begin{equation*}
\left\langle B(\Phi )-B\left( \Phi _{h}\right) ,\Phi
-I_{h}^{(1)}\Phi \right\rangle \leq \int_{D_{2}}\left\vert
a_{2}i_{0}\right\vert \left\vert \int_{\beta \eta _{h}}^{\beta \eta
}\cosh \xi d\xi \right\vert \left\vert \Phi -I_{h}^{(1)}\Phi
\right\vert dx.
\end{equation*}%
By virtue of (\ref{estK1}) and (\ref{estK2}) and the mean value theorem for
the integral
\begin{equation*}
\begin{array}{l}
\left\langle B(\Phi )-B\left( \Phi _{h}\right) ,\Phi
-I_{h}^{(1)}\Phi \right\rangle \leq
C\displaystyle\int_{D_{2}}\left\vert \eta -\eta
_{h}\right\vert \left\vert \Phi -I_{h}^{(1)}\Phi \right\vert dx \\
\\
\leq C\displaystyle\int_{D_{2}}\left( \left\vert \phi _{2}-\phi
_{2h}\right\vert +\left\vert \phi _{1}-\phi _{1h}\right\vert +\left\vert
u-u_{h}\right\vert +\left\vert v_{s}-v_{sh}\right\vert \right) \left\vert
\Phi -I_{h}^{(1)}\Phi \right\vert dx.%
\end{array}%
\end{equation*}%
Applying Young inequality yields%
\begin{equation}
\begin{array}{c}
\left\langle B(\Phi )-B\left( \Phi _{h}\right) ,\Phi
-I_{h}^{(1)}\Phi \right\rangle \leq \epsilon _{2}\left( \left\Vert
\Phi -\Phi _{h}\right\Vert
_{V}^{2}+\left\Vert u-u_{h}\right\Vert _{L^{2}(D_{1})}^{2}\right. \\
\\
\left. +\left\Vert v_{s}-v_{sh}\right\Vert _{L^{2}(D_{2})}^{2}\right)
+C(\epsilon _{2})\left\Vert \Phi -I_{h}^{(1)}\Phi \right\Vert _{V}^{2},%
\end{array}
\label{eep6}
\end{equation}%
where $\epsilon _{2}$ is a small positive number and $C(\epsilon _{2})$ is a
constant. Next, noting that by virtue of (\ref{estK1}) and (\ref{estK2}) $%
\left\vert \sinh (\beta \eta _{h})\right\vert $ is bounded in $D_{2}$, then
\begin{equation*}
\left\langle B_{h}(\Phi _{h})-B\left( \Phi _{h}\right) ,\left( \Phi
-\Phi _{h}\right) +\left( I_{h}^{(1)}\Phi -\Phi \right)
\right\rangle \leq C\int_{D_{2}}a_{2}\left\vert
i_{0}-i_{0h}\right\vert \left\vert \left( \Phi -\Phi _{h}\right)
+\left( I_{h}^{(1)}\Phi -\Phi \right) \right\vert dx.
\end{equation*}%
Again, using Lemma \ref{lem3} and Young inequality we obtain that there
exist a small number $\epsilon _{3}$ and a constant $C(\epsilon _{3})$ such
that%
\begin{equation}
\begin{array}{l}
\left\langle B_{h}(\Phi _{h})-B\left( \Phi _{h}\right) ,\left( \Phi
-\Phi _{h}\right) +\left( I_{h}^{(1)}\Phi -\Phi \right)
\right\rangle \leq \epsilon
_{3}\left\Vert \Phi -\Phi _{h}\right\Vert _{V}^{2} \\
\\
+\epsilon _{3}\left( \left\Vert u-u_{h}\right\Vert
_{L^{2}(D_{1})}^{2}+\left\Vert v_{s}-v_{sh}\right\Vert
_{L^{2}(D_{2})}^{2}\right) +C(\epsilon _{3})\left\Vert \Phi
-I_{h}^{(1)}\Phi
\right\Vert _{V}^{2}.%
\end{array}
\label{eep7}
\end{equation}%
To bound the last term on the right hand side of (\ref{eep3}) we note that $%
\left\Vert \frac{\partial \phi _{1}}{\partial x}\right\Vert
_{L^{\infty }(D_{1})}$ is bounded and by virtue of assumption
\textbf{A4}, and  by the mean value theorem, $\left\vert \left(
\kappa (u)-\kappa (u_{h})\right) \right\vert \leq C\left\vert
u-u_{h}\right\vert $, then\textbf{\ }it follows
that%
\begin{equation}
\begin{array}{l}
\displaystyle\int_{D_{1}}\left( \kappa (u)-\kappa (u_{h})\right)
\frac{\partial \phi _{1}}{\partial x}\frac{\partial \left( \left(
\phi _{1}-\phi _{1h}\right) +\left( I_{h}^{(1)}\phi _{1}-\phi
_{1h}\right) \right)
}{\partial x}dx \\
\\
\leq C\displaystyle\int_{D_{1}}\left\vert u-u_{h}\right\vert
\left\vert \frac{\partial \left( \left( \phi _{1}-\phi _{1h}\right)
+\left( I_{h}^{(1)}\phi
_{1}-\phi _{1h}\right) \right) }{\partial x}\right\vert dx \\
\\
\leq \epsilon _{4}\left\Vert \Phi -\Phi _{h}\right\Vert _{V}^{2}+C(\epsilon
_{4})\left( \left\Vert u-u_{h}\right\Vert _{L^{2}(D_{1})}^{2}+\left\Vert
\Phi -I_{h}^{(1)}\Phi \right\Vert _{V}^{2}\right) .%
\end{array}
\label{eep8}
\end{equation}%
Letting$\ \epsilon _{1}+\cdots +\epsilon _{4}=\alpha /2$ and noting that,
see (\ref{pre4.1}),%
\begin{equation*}
\begin{array}{c}
\left\Vert \Phi -I_{h}^{(1)}\Phi \right\Vert _{V}^{2}=\left\Vert
\phi _{1}-I_{h}^{(1)}\phi _{1}\right\Vert
_{H^{1}(D_{1})}^{2}+\left\Vert \phi
_{2}-I_{h}^{(1)}\phi _{2}\right\Vert _{H^{1}(D_{2})}^{2} \\
\\
\leq Ch^{2}\left( \left\Vert \phi _{1}\right\Vert
_{H^{2}(D_{1})}^{2}+\left\Vert \phi _{2}\right\Vert
_{H^{2}(D_{2})}^{2}\right) .%
\end{array}%
\end{equation*}%
the estimate (\ref{eep}) follows from (\ref{eep3})-(\ref{eep8}).
\end{proof}

\subsection{Error estimates for the concentrations $u(x,t)$ and $v(x;r,t)$}

{We wish to estimate $u(x,t)-u_{h}(x,t)$ and $v(x;r,t)-v_{h\Delta
r}(x;r,t)$ in the $L^{2}-$norm assuming that both $u(x,t)$ and
$v(x;r,t)$ are as
regular as required. Following the standard approach, we decompose $%
u-u_{h}$ as%
\begin{equation} \label{uv1}
u-u_{h}=(u-P_{1}u)+(P_{1}u-u_{h})\equiv \rho _{u}+\theta _{u},
\end{equation}%
where $P_{1}$ is the elliptic projector defined in (\ref{pre3}), and
note that $\theta _{u}(x,t)\in V_{h}^{(1)}(\overline{D}_{1})$. To
carry out a decomposition of this kind for $v(x;r,t)-v_{h\Delta
r}(x;r,t)$, at first we can try using the extended elliptic
projector $P_{1}^{r}:H_{\ \ r}^{1,1}(D_{2}\times (0,R_{s}(\cdot
)))\rightarrow L^{2}(D_{2})\otimes V_{\Delta r}^{(1)}[0,R_{s}(\cdot
)]$ defined in (\ref{pre8}) and assume that for all $t$,
$v(x;r,t)\in H_{\ \ r}^{1,1}(D_{2}\times (0,R_{s}(\cdot )))$, then
we find that for a. e. $x\in D_{2}$%
\begin{equation*}
P_{1}^{r}v(x;r,t)=\sum_{j=1}^{M}P_{1}^{r}v(x;r_{j},t)\alpha _{j}(r),
\end{equation*}%
here, $M$ denotes the number of mesh points in $[0,R_{s}(\cdot )]$,
$\left\{ \alpha _{j}(r)\right\} _{j=1}^{M}$ the nodal basis of the
linear finite element space $V_{\Delta r}^{(1)}[0,R_{s}(\cdot
)]\subset H_{r}^{1}(0,R_{s}(\cdot ))$ and the function
$P_{1}^{r}v(x;r_{j},t)\in
L^{2}(D_{2})$; so, in general, $P_{1}^{r}v(x;r,t)$ is not in $V_{h\Delta r}(\overline{D}%
_{3})$ and, consequently, it does not make sense to use $P_{1}^{r}v(x;r,t)-v_{h%
\Delta r}(x;r,t)$ for such type of decomposition; however, recalling
the interpolant $I_{0}^{x}:H_{\ \ r}^{1,1}(D_{2}\times (0,R(\cdot
)))\rightarrow V_{h}^{(0)}(\overline{D}_{2})\otimes
H_{r}^{1}(0,R(\cdot ))$ defined in (\ref{pre4.1}) and further
assuming that $P_{1}^{r}v(x;r_{j},t)\in H^{1}(D_{2})$, then it
follows that $P_{1}^{r}v(x;r,t)\in H_{\ \
r}^{1,1}(D_{2}\times (0,R(\cdot )))$ and, therefore, we can define $%
I_{0}^{x}P_{1}^{r}v(x;r,t)$ as
\begin{equation*}
I_{0}^{x}P_{1}^{r}v(x;r,t)=\sum_{l=1}^{M_{2}}%
\sum_{j=1}^{M^{(l)}}P_{1}^{r}v(x_{l};r_{j},t)\alpha
_{j}^{(l)}(r)\chi _{l}(x);
\end{equation*}%
this expression implies that $I_{0}^{x}P_{1}^{r} v(x;r,t)\in
V_{h\Delta r}(\overline{D}_{3})$, so it makes sense to set
\begin{equation*}
\theta _{v}(x;r,t)=I_{0}^{x}P_{1}^{r}v(x;r,t)-v_{h\Delta r}(x;r,t).
\end{equation*}%
Now, using again the extended $P_{1}^{r}$ elliptic projector we
define
\begin{equation*}
\rho _{v}(x;r,t)=v(x;r,t)-P_{1}^{r}v(x;r,t)\in H_{\ \
r}^{1,1}(D_{2}\times (0,R(\cdot ))),
\end{equation*}%
and consequently%
\begin{equation*}
I_{0}^{x}\rho
_{v}(x;r,t)=I_{0}^{x}v(x;r,t)-I_{0}^{x}P_{1}^{r}v(x;r,t).
\end{equation*}%
Then, from all these considerations we can write that
\begin{equation} \label{uv2}
v(x;r,t)-v_{h\Delta r}(x;r,t)=(v-I_{0}^{x}v)(x;r,t)+I_{0}^{x}\rho
_{v}(x;r,t)+\theta _{v}(x;r,t).
\end{equation}
From (\ref{uv1}) and (\ref{uv2}) it follows that
\begin{equation*}
\left\Vert u(t)-u_{h}(t)\right\Vert _{L^{2}(D_{1})}\leq \left\Vert \rho
_{u}(t)\right\Vert _{L^{2}(D_{1})}+\left\Vert \theta _{u}(t)\right\Vert
_{L^{2}(D_{1})},
\end{equation*}%
and%
\begin{equation*}
\begin{array}{r}
\left\Vert v(t)-v_{h\Delta r}(t)\right\Vert
_{L^{2}(D_{2},L_{r}^{2}(0,R(\cdot )))}\leq \left\Vert
v(t)-I_{0}^{x}v(t)\right\Vert _{L^{2}(D_{2},L_{r}^{2}(0,R(\cdot )))} \\
\\
+\left\Vert I_{0}^{x}\rho _{v}(t)\right\Vert
_{L^{2}(D_{2},L_{r}^{2}(0,R(\cdot )))}+\left\Vert \theta _{v}(t)\right\Vert
_{L^{2}(D_{2},L_{r}^{2}(0,R(\cdot )))}.%
\end{array}%
\end{equation*}%
The estimates for $\rho _{u}$ and $\rho _{v}$ are given in (\ref{pre5}) and (%
\ref{pre10}) respectively, i.e.,%
\begin{equation} \label{uv2_1}
\left\{
\begin{array}{l}
\left\Vert \rho _{u}(t)\right\Vert _{L^{2}(D_{1})}\leq Ch^{2}\left\Vert
u(t)\right\Vert _{H^{2}(D_{1})}, \\
\\
\left\Vert \rho _{v}(t)\right\Vert _{L^{2}(D_{2},L_{r}^{2}(0,R(\cdot
)))}\leq C\Delta r^{2}\left\Vert v(t)\right\Vert
_{L^{2}(D_{2},H_{r}^{2}(0,R(\cdot )))},%
\end{array}
\right.
\end{equation}%
then, it remains to calculate the estimates for $\theta _{u}$ and
$\theta _{v} $; but before going into the details of such
calculations, we present
new estimates for $\left\Vert J-J_{h}\right\Vert _{L^{2}(D_{2})}^{2}$ and $%
\left\Vert v_{s}(t)-v_{sh}(t)\right\Vert _{L^{2}(D_{2})}^{2}$, which
depend on $\theta _{u}$ and $\theta _{v}$ respectively, and will be
useful for the subsequent part of the analysis.

\begin{lemma}
\label{lem6} Assuming that the regularity assumptions required in the
estimates hold, there exist an arbitrarily small positive number $\epsilon $
and constants $C$ and $C(\epsilon )$ independent of $h$ and $\Delta r$ such
that%
\begin{equation}
\begin{array}{r}
\left\Vert v_{s}(t)-v_{sh}(t)\right\Vert _{L^{2}(D_{2})}^{2}\leq C
h^{2}\left\vert v_{s}(t)\right\vert
_{H^{1}(D_{2})}^{2}+C(\epsilon)\Delta r^{2}\left\Vert
v(t)\right\Vert _{L^{2}(D_{2},H_{r}^{2}(0,R_{s}(\cdot
)))}^{2} \\
\\
+C(\epsilon )\left\Vert \theta _{v}(t)\right\Vert
_{L^{2}(D_{2},L_{r}^{2}(0,R_{s}(\cdot )))}^{2}+\epsilon \displaystyle%
\left\Vert \frac{\partial \theta _{v}(t)}{\partial r}\right\Vert
_{L^{2}(D_{2},L_{r}^{2}(0,R_{s}(\cdot )))}^{2},%
\end{array}
\label{uv6.0}
\end{equation}%
and

\begin{equation}
\begin{array}{c}
\left\Vert J-J_{h}\right\Vert _{L^{2}(D_{2})}^{2}\leq Ch^{2}\left(
\left\Vert \phi _{1}(t)\right\Vert _{H^{2}(D_{1})}^{2}+\left\Vert \phi
_{2}(t)\right\Vert _{H^{2}(D_{2})}^{2}+h^{2}\left\Vert u(t)\right\Vert
_{H^{2}(D_{1})}^{2}\right) \\
\\
+C\left\Vert v_{s}(t)-v_{sh}(t)\right\Vert
_{L^{2}(D_{2})}^{2}+C\left\Vert
\theta _{u}(t)\right\Vert _{L^{2}(D_{1})}^{2}.%
\end{array}
\label{uv6}
\end{equation}
\end{lemma}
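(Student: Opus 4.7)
The plan is to split the claim into the two bounds (\ref{uv6.0}) and (\ref{uv6}) and treat them in that order, each time starting from the decompositions already introduced in the excerpt.

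For (\ref{uv6.0}) I would evaluate the decomposition (\ref{uv2}) at $r=R_{s}(x)$ and exploit the crucial geometric observation that on each nonconforming element $\widehat{e}_{l}$ the radius $R_{s}(x)$ is constant, equal to $R_{s}(x_{l})$. This collapses $(v-I_{0}^{x}v)(x,R_{s}(x))$ to $v_{s}(x)-I_{h}^{(0)}v_{s}(x)$, whose $L^{2}(D_{2})$-norm is at most $Ch\,|v_{s}|_{H^{1}(D_{2})}$ by (\ref{pre4}). The remaining two traces $(I_{0}^{x}\rho_{v})(\cdot,R_{s}(\cdot))$ and $\theta_{v}(\cdot,R_{s}(\cdot))$ are $x$-piecewise-constant, so their squared $L^{2}(D_{2})$-norms reduce to $\sum_{l}\widehat{h}_{l}$ times squared nodal traces. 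I would then invoke Lemma \ref{lem2} (the radial trace--Sobolev inequality) at each $x_{l}$ to bound $|\rho_{v}(x_{l},R_{s}(x_{l}))|$ and $|\theta_{v}(x_{l},R_{s}(x_{l}))|$ by $\epsilon\,\|\partial_{r}(\cdot)\|_{L_{r}^{2}}+C(\epsilon)\,\|\cdot\|_{L_{r}^{2}}$. Reassembling turns the sums into $L^{2}(D_{2},L_{r}^{2}(0,R_{s}(\cdot)))$-norms of $I_{0}^{x}\rho_{v}$, $\partial_{r}I_{0}^{x}\rho_{v}$, $\theta_{v}$ and $\partial_{r}\theta_{v}$; on the $\rho_{v}$-piece I would transfer back to $\rho_{v}$ itself using (\ref{uv3}) and then use the extended-projection estimate (\ref{pre10}) to obtain the claimed $C(\epsilon)\Delta r^{2}\|v\|_{L^{2}(D_{2},H_{r}^{2}(0,R_{s}(\cdot)))}^{2}$ term. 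The $\theta_{v}$-piece remains as written.

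For (\ref{uv6}) I would chain Lemma \ref{lem4}, Theorem \ref{Theorem 1}, and the decomposition (\ref{uv1}). Lemma \ref{lem4} already reduces $\|J-J_{h}\|_{L^{2}(D_{2})}^{2}$ to $\|\Phi-\Phi_{h}\|_{V}^{2}+\|u-u_{h}\|_{L^{2}(D_{1})}^{2}+\|v_{s}-v_{sh}\|_{L^{2}(D_{2})}^{2}$; Theorem \ref{Theorem 1} then upgrades $\|\Phi-\Phi_{h}\|_{V}^{2}$ to $Ch^{2}(\|\phi_{1}\|_{H^{2}(D_{1})}^{2}+\|\phi_{2}\|_{H^{2}(D_{2})}^{2})$ plus additional copies of $\|u-u_{h}\|^{2}+\|v_{s}-v_{sh}\|^{2}$, which are simply absorbed into the right-hand side of (\ref{uv6}). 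Finally, writing $u-u_{h}=\rho_{u}+\theta_{u}$ and applying (\ref{pre5}) with $m=2$, $l=0$ gives $\|\rho_{u}\|_{L^{2}(D_{1})}^{2}\le Ch^{4}\|u\|_{H^{2}(D_{1})}^{2}$, which explains the mixed factor $h^{2}\cdot h^{2}\|u\|_{H^{2}}^{2}$ in (\ref{uv6}) and leaves only $\|\theta_{u}\|^{2}$ and $\|v_{s}-v_{sh}\|^{2}$ outstanding, as required.

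The main obstacle I expect lies in (\ref{uv6.0}): one must argue carefully that the node-by-node use of Lemma \ref{lem2} is consistent with the $\hat h_{l}$-weighted reassembly into an $L^{2}(D_{2},L_{r}^{2})$-norm, and then relate the weighted sum $\sum_{l}\widehat{h}_{l}\,\|\rho_{v}(x_{l},\cdot)\|_{L_{r}^{2}}^{2}$ to an integrated quantity to which the extended projection estimate (\ref{pre10}) genuinely applies, without losing a power of $\Delta r$. A secondary bookkeeping issue is that the small parameter $\epsilon$ must stay attached to $\|\partial_{r}\theta_{v}\|^{2}$ (so it can later be absorbed by a coercive term in the $\theta_{v}$-equation), while the analogous $\rho_{v}$-derivative factor can be freely converted into a clean $\Delta r^{2}$ bound because $\|\rho_{v}\|$ is already of the right order. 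Once these two points are handled, the remaining computations are routine manipulations with the triangle and Young inequalities.
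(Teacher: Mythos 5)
Your proposal takes essentially the same approach as the paper. The paper splits $v_s-v_{sh}=(v_s-I_h^{(0)}v_s)+(I_h^{(0)}v_s-v_{sh})$ and then identifies the second piece with $(I_0^x\rho_v+\theta_v)(\cdot,R_s(\cdot),t)$, which is exactly your trace of (\ref{uv2}); it then applies Lemma~\ref{lem2} node by node, converts the weighted sums into $L^2(D_2,L_r^2)$-quantities, and absorbs the $\rho_v$-derivative into a $C(\epsilon)$ bound via the interpolant stability and (\ref{pre10}), just as you describe. Your derivation of (\ref{uv6}) from Lemma~\ref{lem4}, Theorem~\ref{Theorem 1}, (\ref{uv1}) and (\ref{pre5}) is also word-for-word what the paper does, including the observation that the $h^4\|u\|_{H^2}^2$ term comes from $\|\rho_u\|_{L^2}^2$.
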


\begin{proof}
To calculate the estimate (\ref{uv6.0}) we set $v_{s}-v_{sh}=\left(
v_{s}-I_{h}^{0}v_{s}\right) +\left( I_{h}^{0}v_{s}-v_{sh}\right) $, so using
(\ref{pre4}) it follows that%
\begin{equation*}
\left\Vert v_{s}(t)-v_{sh}(t)\right\Vert
_{L^{2}(D_{2})}^{2}=Ch^{2}\left\vert v_{s}(t)\right\vert
_{H^{1}(D_{2})}^{2}+2\left\Vert I_{h}^{0}v_{s}(t)-v_{sh}(t)\right\Vert
_{L^{2}(D_{2})}^{2}.
\end{equation*}%
Since $I_{h}^{0}v_{s}-v_{sh}=\left( I_{h}^{0}v-v_{h\Delta r}\right)
(x;R_{s}(x),t)=\left( I_{0}^{x}\rho _{v}+\theta _{v}\right)
(x;R_{s}(x),t)$ (recalling the definition of $\theta_{v}$) then by
virtue of the definition of the $L^{2}$-norm for functions of
$V_{h}^{0}(\overline{D}_{2})$
presented in Section 3, we have that%
\begin{equation*}
\left\Vert I_{h}^{0}v_{s}(t)-v_{sh}(t)\right\Vert
_{L^{2}(D_{2})}^{2}\leq 2\sum_{l=1}^{M_{2}}\widehat{h}_{l}\left(
\rho _{v}^{2}(x_{l};R_{s}(x_{l}),t)+\theta
_{v}^{2}(x_{l};R_{s}(x_{l}),t)\right) .
\end{equation*}%
To estimate $\rho _{v}^{2}(x_{l};R_{s}(x_{l}),t)$ and $\theta
_{v}^{2}(x_{l};R_{s}(x_{l}),t)$ we make use of Lemma \ref{lem2}
noting that there exists a real number $a$, $0<a<R_{s}(x_{l})$, such
that $\rho _{v}^{2}(x_{l};R_{s}(x_{l}),t)\leq \left\Vert \rho
_{v}(x_{l};r,t)\right\Vert _{L^{\infty }(a,R(x_{l}))}^{2}$ and
$\theta _{v}^{2}(x_{l};R_{s}(x_{l}),t)\leq \left\Vert \theta
_{v}(x_{l};r,t)\right\Vert _{L^{\infty }(a,R(x_{l}))}^{2}$, thus by
virtue of (\ref{pre12}) it follows that there are a real number
$\epsilon$ and positive constant $C(\epsilon)>\epsilon$ such that
\begin{equation*}
\begin{array}{r}
\displaystyle\sum_{l=1}^{M_{2}}\widehat{h}_{l}\rho
_{v}^{2}(x_{l};R_{s}(x_{l}),t)\leq \displaystyle\epsilon \sum_{l=1}^{M_{2}}%
\widehat{h}_{l}\int_{0}^{R_{s}(x_{l})}r^{2}\left\vert \frac{\partial
\rho
_{v}(x_{l};r,t)}{\partial r}\right\vert ^{2}dr \\
\\
+\displaystyle C(\epsilon )\sum_{l=1}^{M_{2}}\widehat{h}%
_{l}\int_{0}^{R_{s}(x_{l})}r^{2}\left\vert \rho _{v}(x_{l};r,t)\right\vert
^{2}dr \\
\\
\leq C(\epsilon )\left\Vert I_{0}^{x}\rho _{v}(t)\right\Vert
_{L^{2}(D_{2},H_{r}^{1}(0,R_{s}(\cdot )))}^{2} \\
\\
\leq C(\epsilon )\left\Vert \rho _{v}(t)\right\Vert
_{L^{2}(D_{2},H_{r}^{1}(0,R_{s}(x)))} \\
\\
\leq C(\epsilon )\Delta r^{2}\left\Vert v(t)\right\Vert
_{L^{2}(D_{2},H_{r}^{2}(0,R_{s}(\cdot )))}^{2},%
\end{array}%
\end{equation*}%
because by approximation theory $\left\Vert I_{0}^{x}\rho
_{v}(t)\right\Vert _{L^{2}(D_{2},H_{r}^{1}(0,R_{s}(x)))}\leq
C\left\Vert \rho _{v}(t)\right\Vert
_{L^{2}(D_{2},H_{r}^{1}(0,R_{s}(x)))}$, this latter term
being estimated according to (\ref{pre10}). Similarly,%
\begin{equation}
\begin{array}{r}
\displaystyle\sum_{l=1}^{M_{2}}\widehat{h}_{l}\theta
_{v}^{2}(x_{l};R_{s}(x_{l}),t)\leq \epsilon \displaystyle\sum_{l=1}^{M_{2}}%
\widehat{h}_{l}\int_{0}^{R_{s}(x_{l})}r^{2}\left\vert \frac{\partial
\theta _{v}(x_{l};r,t)}{\partial r}\right\vert ^{2}dr \\
\\
+C(\epsilon )\displaystyle\sum_{l=1}^{M_{2}}\widetilde{h}_{l}%
\int_{0}^{R_{s}(x_{l})}r^{2}\left\vert \theta _{v}(x_{l};r,t)\right\vert
^{2}dr \\
\\
\leq \epsilon \left\vert \theta _{v}(t)\right\vert
_{L^{2}(D_{2},H_{r}^{1}(0,R_{s}(\cdot )))}^{2}+C(\epsilon )\left\Vert \theta
_{v}(t)\right\Vert _{L^{2}(D_{2},L_{r}^{2}(0,R_{s}(\cdot )))}^{2}.%
\end{array}
\label{theta}
\end{equation}%
So, putting these bounds together the result (\ref{uv6.0}) follows. To
calculate the estimate (\ref{uv6}) we notice that by virtue of (\ref{pre14})
and Theorem \ref{Theorem 1}%
\begin{equation*}
\begin{array}{r}
\left\Vert J-J_{h}\right\Vert _{L^{2}(D_{2})}^{2}\leq C\left(
h^{2}\left(\left\Vert \phi _{1}(t)\right\Vert
_{H^{2}(D_{1})}^{2}+\left\Vert \phi
_{2}(t)\right\Vert _{H^{2}(D_{2})}^{2}\right)\right. \\
\\
\left. +\left\Vert u(t)-u_{h}(t)\right\Vert _{L^{2}(D_{1})}^{2}+\left\Vert
v_{s}(t)-v_{sh}(t)\right\Vert _{L^{2}(D_{2})}^{2}\right) .%
\end{array}%
\end{equation*}%
Since $u-u_{h}=\rho _{u}+\theta _{u}$, then taking into account (\ref{pre5})
\begin{equation*}
\left\Vert u(t)-u_{h}(t)\right\Vert _{L^{2}(D_{1})}^{2}\leq Ch^{4}\left\Vert
u(t)\right\Vert _{H^{2}(D_{1})}^{2}+2\left\Vert \theta _{u}\right\Vert
_{L^{2}(D_{1})}^{2},
\end{equation*}%
so the result (\ref{uv6}) follows.
\end{proof}

Next, we calculate an estimate for $\theta _{v}$. To this end, we
obtain, based on equation (\ref{w2}), the integral equation for
$I_{0}^{x}v(x;r,t)$  that will be used for that purpose. Thus, for
each one of the mesh points $\{x_{l}\}_{l=1}^{M_{2}}$ of $D_{2h}$
the equation (\ref{w2}) reads%
\begin{equation}
\int_{0}^{R_{s}(x_{l})}\frac{\partial {v}^{(l)}}{\partial {t}}%
w^{(l)}r^{2}dr+\int_{0}^{R_{s}(x_{l})}k_{2}\frac{\partial v^{(l)}}{\partial r%
}\frac{\partial w^{(l)}}{\partial r}r^{2}dr=-\left(
R_{s}^{2}(x)a_{2}^{-1}(x)F^{-1}Jw^{(l)}(R_{s}(x))\right)|
_{x=x_{l}}, \label{I0v1}
\end{equation}%
where $v^{(l)}:=v(x_{l};r,t)\in H_{r}^{1}(0,R_{s}(x_{l})$ and
$w^{(l)}:=w(x_{l};r)\in H_{r}^{1}(0,R_{s}(x_{l}))$. Using the nodal
basis functions $\left\{ \chi _{l}(x)\right\} _{l=1}^{M_{2}} $ of
the finite element space $V_{h}^{(0)}(\overline{D}_{2})$ we can write%
\begin{equation*}
I_{0}^{x}{v(x;r,t)=}\sum_{l}^{M_{2}}v^{(l)}(r,t)\chi _{l}(x)\mathrm{\ and\ \
}w(x;r)=\sum_{l=1}^{M_{2}}w^{(l)}(r)\chi _{l}(x).
\end{equation*}%
Now, noting that
\begin{equation*}
\int_{D_{2}}\int_{0}^{R_{s}(x)}\left( I_{0}^{x}{v}\right)
wr^{2}drdx=\sum_{l=1}^{M_{2}}\int_{D_{2}}\chi _{l}^{2}(x)\left(
\int_{0}^{R_{s}(x)}v^{(l)}(r,t)w^{(l)}(r)r^{2}dr\right) dx,
\end{equation*}%
and for $x\in \widehat{e}_{l}$, $R_{s}(x)=R_{s}(x_{l})$, then it follows that%
\begin{equation*}
\sum_{l=1}^{M_{2}}\int_{D_{2}}\chi _{l}^{2}(x)\left(
\int_{0}^{R_{s}(x)}v^{(l)}(r,t)w^{(l)}(r)r^{2}dr\right) dx=\sum_{l=1}^{M_{2}}%
\widehat{h}_{l}\int_{0}^{R_{s}(x_{l})}v^{(l)}(r,t)w^{(l)}(r)r^{2}dr.
\end{equation*}%
Hence, (\ref{I0v1}) becomes
\begin{equation}
\int_{D_{2}}\int_{0}^{R_{s}(x)}\left( \frac{\partial I_{0}^{x}{v}}{\partial {%
t}}w+k_{2}\frac{\partial I_{0}^{x}v}{\partial r}\frac{\partial w}{\partial r}%
\right)
r^{2}drdx=-\int_{D_{2}}I_{0}^{x}(R_{s}^{2}(x)a_{2}^{-1}(x)F^{-1}Jw(x;R_{s}(x)))dx.
\label{uv4}
\end{equation}%
We proceed to formulate the equation for $\theta _{v}$. From (\ref{uv2}) it
follows that $v_{h\Delta r}=I_{0}^{x}v-(I_{0}^{x}\rho _{v}+\theta _{v})$,
then replacing this expression for $v_{h\Delta r}$ in (\ref{num2}) and using
(\ref{pre8}) and (\ref{uv4}) it follows that for all $w_{h\Delta r}\in
V_{h\Delta r}(\overline{D}_{3})$,%
\begin{equation}
\begin{array}{r}
\displaystyle\int_{D_{2}}\int_{0}^{R_{s}(x)}\left( \frac{\partial \theta _{v}%
}{\partial {t}}w_{h\Delta r}+k_{2}\frac{\partial \theta _{v}}{\partial r}%
\frac{\partial w_{h\Delta r}}{\partial r}\right) r^{2}drdx=\lambda
\int_{D_{2}}\int_{0}^{R_{s}(x)}I_{0}^{x}\rho _{v}w_{h\Delta r}r^{2}drdx \\
\\
-\displaystyle\int_{D_{2}}\int_{0}^{R_{s}(x)}\frac{\partial I_{0}^{x}\rho
_{v}}{\partial {t}}w_{h\Delta r}r^{2}drdx \\
\\
-\displaystyle\int_{D_{2}}I_{h}^{0}(R_{s}^{2}(x)a_{2}^{-1}(x)F^{-1}(J(x)-J_{h}(x))w_{sh}(x))dx
\\
\\
+\displaystyle\int_{D_{2}}\left(
R_{s}^{2}(x)a_{2}^{-1}(x)F^{-1}J_{h}(x)-I_{h}^{0}(R_{s}^{2}(x)a_{2}^{-1}F^{-1}(x)J_{h}(x)\right) )w_{sh}(x)dx,%
\end{array}
\label{uv5}
\end{equation}%
where we have made use of the following properties of the interpolant $%
I_{0}^{x}$: (i) for $w_{h\Delta r}(x,r)\in V_{h\Delta r}(\overline{D}_{3})$,
$w_{h\Delta r}(x,r)=I_{0}^{x}w_{h\Delta r}(x,r)$, and (ii) when $r=R_{s}(x)$%
, we can define the function $w_{sh}(x)=w_{h\Delta r}(x,R_{s}(x))$ such that
$w_{sh}(x)=I_{0}^{x}w_{h\Delta r}(x,R_{s}(x))=I_{h}^{0}w_{sh}(x)$.\ Setting $%
w_{h\Delta r}=\theta _{v}$ yields%
\begin{equation}
\begin{array}{l}
\displaystyle\frac{1}{2}\frac{d}{dt}\left\Vert \theta
_{v}(t)\right\Vert _{L^{2}(D_{2},L_{r}^{2}(0,R_{s}(\cdot
)))}^{2}+\displaystyle \underline{k_{2}}\left\Vert \frac{\partial
\theta _{v}(t)}{\partial r}\right\Vert
_{L^{2}(D_{2},L_{r}^{2}(0,R_{s}(\cdot )))}^{2} \\
\\
\leq \lambda \left\Vert I_{0}^{x}\rho _{v}(t)\right\Vert
_{L^{2}(D_{2},L_{r}^{2}(0,R_{s}(\cdot )))}\left\Vert \theta
_{v}(t)\right\Vert _{L^{2}(D_{2},L_{r}^{2}(0,R_{s}(\cdot )))} \\
\\
+\displaystyle\left\Vert \frac{\partial I_{0}^{x}\rho _{v}(t)}{\partial t}%
\right\Vert _{L^{2}(D_{2},L_{r}^{2}(0,R_{s}(\cdot )))}\left\Vert \theta
_{v}(t)\right\Vert _{L^{2}(D_{2},L_{r}^{2}(0,R_{s}(\cdot )))} \\
\\
+C\left\Vert I_{h}^{(0)}\left( (J-J_{h})\theta _{vs}(t)\right)
\right\Vert _{L^{1}(D_{2})}+C\left\Vert
(J_{h}-I_{(0)}^{0}J_{h})\theta _{vs}(t)\right\Vert
_{L^{1}(D_{2})}\equiv \sum_{i=1}^{4}R_{i},%
\end{array}
\label{uv7}
\end{equation}%
where,\ $\theta _{vs}(t)=\theta _{v}(x;R_{s}(x),t)$ is the value of $\theta
_{v}$ on the surface of the sphere of radius $R_{x}(x)$ associated with the
point $\{x\}$ of $D_{2}$.

\begin{lemma}
\label{lem7} There exists a constant $C$ independent of $h$ and
$\Delta r$, but depending on $\underline{k_{1}}$ and
$\underline{k_{2}}$, such that
\begin{equation}
\begin{array}{c}
\displaystyle\frac{d}{dt}\left\Vert \theta _{v}(t)\right\Vert
_{L^{2}(D_{2},L_{r}^{2}(0,R_{s}(\cdot )))}^{2}+\displaystyle
\underline{k_{2}}\left\Vert \frac{\partial \theta _{v}(t)}{\partial
r}\right\Vert
_{L^{2}(D_{2},L_{r}^{2}(0,R_{s}(\cdot )))}^{2} \\
\\
\leq Ch^{2}\left( \left\Vert \phi _{1}(t)\right\Vert
_{H^{2}(D_{1})}^{2}+\left\Vert \phi _{2}(t)\right\Vert
_{H^{2}(D_{2})}^{2}\right) \\
\\
+Ch^{2}\left( \left\vert v_{s}(t)\right\vert
_{H^{1}(D_{2})}^{2}+h^{2}\left\Vert u(t)\right\Vert _{H^{2}(D_{1})}^{2}+%
\displaystyle\left\Vert \frac{\partial J}{\partial x}\right\Vert
_{L^{2}(D_{2})}^{2}\right) \\
\\
+C\Delta r^{2}\left( \left\Vert v(t)\right\Vert
_{L^{2}(D_{2},H_{r}^{2}(0,R_{s}(\cdot )))}^{2}+\Delta r^{2}\displaystyle%
\left\Vert \frac{\partial v(t)}{\partial t}\right\Vert
_{L^{2}(D_{2},H_{r}^{2}(0,R_{s}(\cdot )))}^{2}\right) \\
\\
+C\left(\left\Vert \theta _{u}(t)\right\Vert
_{L^{2}(D_{1})}^{2}+\left\Vert \theta _{v}(t)\right\Vert
_{L^{2}(D_{2},L_{r}^{2}(0,R_{s}(\cdot )))}\right).%
\end{array}
\label{uv7.1}
\end{equation}
\end{lemma}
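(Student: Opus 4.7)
The plan is to bound each of the four right-hand side terms $R_1, R_2, R_3, R_4$ in (\ref{uv7}) separately, choosing Young-inequality parameters $\epsilon$ small enough so that every occurrence of $\|\partial_r \theta_v\|^2_{L^2(D_2, L_r^2(0, R_s(\cdot)))}$ that is generated on the right can be absorbed into a fixed fraction of the diffusion term $\underline{k_2}\|\partial_r \theta_v\|^2$ on the left. Only data-norms (involving $\phi_1$, $\phi_2$, $u$, $v$, $v_s$, $J$) together with $\|\theta_u\|^2$ and $\|\theta_v\|^2$ should survive, since those last two are what one then feeds into Gronwall.

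For $R_1$ and $R_2$ I would use that $P_1^r$ is linear and commutes with $\partial_t$, that $I_0^x$ is $L^2(D_2, L_r^2)$-stable on the class of functions involved, and the extended elliptic-projection estimate (\ref{pre10}) with $q=0$, applied to $v(t)$ and $\partial_t v(t)$ respectively. Young's inequality then gives
\begin{equation*}
R_1 + R_2 \;\leq\; \epsilon\,\|\theta_v(t)\|^2_{L^2(D_2, L_r^2)} \;+\; C(\epsilon)\bigl(\Delta r^4\|v(t)\|^2_{L^2(D_2,H_r^2)} + \Delta r^4\|\partial_t v(t)\|^2_{L^2(D_2,H_r^2)}\bigr),
\end{equation*}
which accounts for the $\Delta r^2$ and $\Delta r^4$ factors in the target estimate (using $\Delta r^4 \leq C\Delta r^2$ for the first). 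For $R_3$ I would use the definition of the $L^1$-norm on $V_h^{(0)}(\overline{D}_2)$ as a weighted nodal sum and apply a discrete Cauchy–Schwarz to factor $R_3 \leq C\|J - J_h\|_{L^2(D_2)} \|\theta_{vs}(t)\|_{L^2(D_2)}$. Lemma \ref{lem6} bounds $\|J-J_h\|^2_{L^2(D_2)}$ via (\ref{uv6}) in terms of $h^2(\|\phi_1\|_{H^2}^2+\|\phi_2\|_{H^2}^2 + h^2\|u\|_{H^2}^2)$, $\|\theta_u\|^2$ and $\|v_s-v_{sh}\|^2$, and the latter in turn is controlled via (\ref{uv6.0}). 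The trace-type estimate established in (\ref{theta}) (an application of Lemma \ref{lem2}) yields
\begin{equation*}
\|\theta_{vs}(t)\|^2_{L^2(D_2)} \;\leq\; \epsilon\|\partial_r\theta_v(t)\|^2_{L^2(D_2,L_r^2)} + C(\epsilon)\|\theta_v(t)\|^2_{L^2(D_2,L_r^2)}.
\end{equation*}

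For $R_4$, I would decompose $J_h - I_h^{(0)}J_h = (J - I_h^{(0)}J) + I_h^{(0)}(J - J_h) - (J - J_h)$; the first piece is controlled by the interpolation estimate (\ref{pre4}) and contributes the $h^2\|\partial J/\partial x\|_{L^2(D_2)}^2$ term, while the other two pieces reuse (\ref{uv6}) plus the $L^2$-stability of $I_h^{(0)}$. Combining this with the same factorization $R_4 \leq C\|J_h - I_h^{(0)}J_h\|_{L^2(D_2)} \|\theta_{vs}\|_{L^2(D_2)}$ and the trace estimate above gives the same classes of bounds as for $R_3$. The main obstacle is the cascade of nested absorptions: both $R_3$ and $R_4$ reintroduce $\|v_s-v_{sh}\|^2$ and $\|\theta_{vs}\|^2$, each of which drags in another $\epsilon\|\partial_r\theta_v\|^2$ via (\ref{uv6.0}) and Lemma \ref{lem2}. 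The accounting must be done so that, after summing over all four terms, the total coefficient of $\|\partial_r\theta_v\|^2$ on the right is strictly less than $\underline{k_2}$. Once the $\epsilon$'s are fixed accordingly, collecting the four bounds and multiplying (\ref{uv7}) by $2$ delivers (\ref{uv7.1}).
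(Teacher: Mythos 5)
Your proposal is correct and follows essentially the same route as the paper's own proof: bound $R_1,\dots,R_4$ from (\ref{uv7}) term by term, using the $L^2$-stability of $I_0^x$, the fact that $P_1^r$ commutes with $\partial_t$, the extended elliptic-projection estimate (\ref{pre10}), the trace-type estimate from Lemma~\ref{lem2} (as exploited in (\ref{theta})), the decomposition $J_h-I_h^{(0)}J_h=(J-I_h^{(0)}J)+I_h^{(0)}(J-J_h)-(J-J_h)$, and Lemma~\ref{lem6}, with the $\epsilon$'s chosen (the paper takes $\epsilon=\underline{k_2}/8$) so that the accumulated $\|\partial_r\theta_v\|^2$ contributions are absorbed by the left-hand diffusion term. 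The only cosmetic difference is that you split Young's inequality as $\epsilon\|\theta_v\|^2+C(\epsilon)(\cdots)$ for $R_1+R_2$ whereas the paper simply writes $C\|\theta_v\|^2+C(\cdots)$, which is immaterial since $\|\theta_v\|^2$ need not be absorbed; otherwise the two arguments coincide step for step.
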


\begin{proof}
We bound the terms $R_{1},\ldots ,R_{4}$ on the right hand side of (\ref{uv7}%
). Noting that $\left\Vert I_{0}^{x}\rho _{v}(t)\right\Vert
_{L^{2}(D_{2},L_{r}^{2}(0,R_{s}(\cdot )))}\leq C\left\Vert \rho
_{v}(t)\right\Vert _{L^{2}(D_{2},L_{r}^{2}(0,R_{s}(\cdot )))}$, then by
virtue of Young inequality and (\ref{pre10}) it follows that%
\begin{equation}
R_{1}\leq C\Delta r^{4}\left\Vert v(t)\right\Vert
_{L^{2}(D_{2},H_{r}^{2}(0,R_{s}(\cdot )))}^{2}+C\left\Vert \theta
_{v}(t)\right\Vert _{L^{2}(D_{2},L_{r}^{2}(0,R_{s}(\cdot )))}^{2}.
\label{uv8}
\end{equation}%
To bound $R_{2}$ we notice that%
\begin{equation*}
\begin{array}{r}
\displaystyle\left\Vert \frac{\partial I_{0}^{x}\rho _{v}(t)}{\partial t}%
\right\Vert _{L^{2}(D_{2},L_{r}^{2}(0,R_{s}(\cdot )))}\leq C\displaystyle%
\left\Vert \frac{\partial \rho _{v}(t)}{\partial t}\right\Vert
_{L^{2}(D_{2},L_{r}^{2}(0,R_{s}(\cdot )))} \\
\\
= C\displaystyle\left\Vert (I-P_{1}^{r})\frac{\partial v(t)}{\partial t}%
\right\Vert _{L^{2}(D_{2},L_{r}^{2}(0,R_{s}(\cdot )))} \\
\\
\leq C\Delta r^{2}\displaystyle\left\Vert \frac{\partial v(t)}{\partial t}%
\right\Vert _{L^{2}(D_{2},H_{r}^{2}(0,R_{s}(\cdot )))}.%
\end{array}%
\end{equation*}%
Hence, by Young inequality it follows that%
\begin{equation}
R_{2}\leq C\Delta r^{4}\left\Vert \frac{\partial v(t)}{\partial t}%
\right\Vert _{L^{2}(D_{2},H_{r}^{2}(0,R_{s}(\cdot )))}^{2}+C\left\Vert
\theta _{v}(t)\right\Vert _{L^{2}(D_{2},L_{r}^{2}(0,R_{s}(\cdot )))}^{2}.
\label{uv9}
\end{equation}%
We bound the term $R_{3}$. Thus, we have that%
\begin{equation*}
\begin{array}{l}
C\left\Vert I_{h}^{0}\left( (J-J_{h})\theta _{vs}(t)\right)
\right\Vert
_{L^{1}(D_{2})}=C\displaystyle\sum_{l=1}^{M_{2}}\widehat{h}_{l}\left\vert
\left( J(x_{l})-J_{h}(x_{l}\right) )\theta
_{v}(x_{l};R_{s}(x_{l}),t)\right\vert \\
\\
\leq
\displaystyle\frac{C^{2}}{2}\sum_{l=1}^{M_{2}}\widehat{h}_{l}\left(
J(x_{l})-J_{h}(x_{l}\right) )^{2}+\displaystyle\frac{1}{2}\sum_{l=1}^{M_{2}}%
\widehat{h}_{l}\theta _{v}^{2}(x_{l};R_{s}(x_{l}),t) \\
\\
\leq \displaystyle\frac{C^{2}}{2}\left\Vert
I_{h}^{0}(J-J_{h})\right\Vert
_{L^{2}(D_{2})}^{2}+\displaystyle\frac{1}{2}\sum_{l=1}^{M_{2}}\widehat{h}%
_{l}\theta _{v}^{2}(x_{l};R_{s}(x_{l}),t).%
\end{array}%
\end{equation*}%
Estimating the last term on the right hand side of this inequality as we did
before in the proof of Lemma \ref{lem6}, see (\ref{theta}), and noting that $%
\left\Vert I_{h}^{0}(J-J_{h})\right\Vert _{L^{2}(D_{2})}\leq C\left\Vert
J-J_{h}\right\Vert _{L^{2}(D_{2})}^{2}$, it readily follows\textbf{\ }that%
\begin{equation}
R_{3}\leq C\left\Vert J-J_{h}\right\Vert _{L^{2}(D_{2})}^{2}+C(\epsilon
)\left\Vert \theta _{v}(t)\right\Vert _{L^{2}(D_{2},L_{r}^{2}(0,R_{s}(\cdot
)))}^{2}+\epsilon \left\Vert \frac{\partial \theta _{v}(t)}{\partial r}%
\right\Vert _{L^{2}(D_{2},L_{r}^{2}(0,R_{s}(\cdot )))}^{2}.  \label{uv10}
\end{equation}%
To bound $R_{4}$ we notice that $
J_{h}-I_{h}^{(0)}J_{h}=(J_{h}-J)+(J-I_{h}^{(0)}J)+I_{h}^{(0)}(J-J_{h})$,
so by the triangle inequality it follows that%
\begin{equation*}
\begin{array}{l}
C\left\Vert (J_{h}-I_{h}^{(0)}J_{h})\theta _{vs}(t)\right\Vert
_{L^{1}(D_{2})}\leq C\left\Vert (J_{h}-J)\theta _{vs}(t)\right\Vert
_{L^{1}(D_{2})} \\
\\
+C\left\Vert (J-I_{h}^{(0)}J)\theta _{vs}(t)\right\Vert
_{L^{1}(D_{2})}+C\left\Vert I_{h}^{(0)}(J-J_{h})\theta
_{vs}(t)\right\Vert
_{L^{1}(D_{2})}.%
\end{array}%
\end{equation*}%
Noting that $\left\Vert ab\right\Vert _{L^{1}(D_{2})}\leq \frac{\varepsilon
}{2}\left\Vert a\right\Vert _{L^{2}(D_{2})}^{2}+\frac{1}{2\varepsilon }%
\left\Vert b\right\Vert _{L^{2}(D_{2})}^{2}$ and applying the same argument
as we have just done to bound $R_{3}$, we obtain that%
\begin{equation*}
\begin{array}{r}
\left\Vert (J_{h}-I_{h}^{0}J_{h})\theta _{vs}(t)\right\Vert
_{L^{1}(D_{2})}\leq C\left( \left\Vert J-J_{h}\right\Vert
_{L^{2}(D_{2})}^{2}+\left\Vert J-I_{h}^{0}J\right\Vert
_{L^{2}(D_{2})}^{2}\right) \\
\\
+\sum_{l=1}^{M_{2}}\widehat{h}_{l}\theta
_{v}^{2}(x_{l};R_{s}(x_{l}),t).%
\end{array}%
\end{equation*}%
We bound the last term of this inequality as we have done for
$R_{3},$ and by virtue of assumption \textbf{A3 }set $\left\Vert
J-I_{h}^{(0)}J\right\Vert
_{L^{2}(D_{2})}^{2}\leq Ch^{2}\left\Vert \frac{\partial J}{\partial x}%
\right\Vert _{L^{2}(D_{2})}^{2}$. Hence,
\begin{equation}
\begin{array}{r}
R_{4}\leq C(\epsilon )\left\Vert \theta _{v}(t)\right\Vert
_{L^{2}(D_{2},L_{r}^{2}(0,R_{s}(\cdot )))}^{2}+\epsilon \displaystyle%
\left\vert \frac{\partial \theta _{v}(t)}{\partial r}\right\vert
_{L^{2}(D_{2},L_{r}^{2}(0,R_{s}(\cdot )))}^{2} \\
\\
+C\left\Vert J-J_{h}\right\Vert _{L^{2}(D_{2})}^{2}+Ch^{2}\displaystyle%
\left\Vert \frac{\partial J}{\partial x}\right\Vert _{L^{2}(D_{2})}^{2}.%
\end{array}
\label{uv10_1}
\end{equation}%
Letting $\epsilon=\underline{k_{2}}/8$ in (\ref{uv6}), (\ref{uv10})
and (\ref{uv10_1}),
and replacing (\ref{uv8})-(\ref{uv10_1}) in (\ref{uv7}), the result (\ref%
{uv7.1}) follows.
\end{proof}

Next, we proceed to calculate an estimate for $\theta _{u}$. Thus,
subtracting (\ref{num1}) from (\ref{w1}) it readily follows that%
\begin{equation*}
\int_{D_{1}}\frac{\partial \theta _{u}}{\partial {t}}w_{h}dx+%
\int_{D_{1}}k_{1}\frac{\partial \theta _{u}}{\partial x}\frac{dw_{h}}{dx}%
dx=\lambda \int_{D_{1}}\rho _{u}w_{h}dx-\int_{D_{1}}\frac{\partial \rho _{u}%
}{\partial {t}}w_{h}dx+\int_{D_{1}}a_{1}\left( J-J_{h}\right) w_{h}dx.
\end{equation*}%
Setting $w_{h}=\theta _{u}$ in this equation yields%
\begin{equation*}
\begin{array}{r}
\displaystyle\frac{1}{2}\frac{d}{dt}\left\Vert \theta
_{u}(t)\right\Vert
_{L^{2}(D_{1})}^{2}+\underline{k_{1}}\displaystyle\left\Vert
\frac{\partial \theta _{u}(t)}{\partial x}\right\Vert
_{L^{2}(D_{1})}^{2}\leq C\displaystyle\left(
\left\Vert \rho _{u}(t)\right\Vert _{L^{2}(D_{1})}^{2}+\left\Vert \frac{%
\partial \rho _{u}(t)}{\partial t}\right\Vert _{L^{2}(D_{1})}^{2}\right) \\
\\
+C\left\Vert J-J_{h}\right\Vert _{L^{2}(D_{1})}^{2}+C\left\Vert \theta
_{u}(t)\right\Vert _{L^{2}(D_{1})}^{2}.%
\end{array}%
\end{equation*}%
By virtue of Lemma \ref{lem6} we have the following result.

\begin{lemma}
\label{lem8} There exists a constant $C$ independent of $h$ and
$\Delta r$, but depending on $\underline{k_{1}}$ and
$\underline{k_{2}}$, such that
\begin{equation}
\begin{array}{l}
\displaystyle\frac{d}{dt}\left\Vert \theta _{u}(t)\right\Vert
_{L^{2}(D_{1})}^{2}+\displaystyle \underline{k_{1}}\left\Vert
\frac{\partial \theta
_{u}(t)}{\partial x}\right\Vert _{L^{2}(D_{1})}^{2} \\
\\
\leq C\left\{ h^{2}(\left\Vert \phi _{1}(t)\right\Vert
_{H^{2}(D_{1})}^{2}+\left\Vert \phi _{2}(t)\right\Vert
_{H^{2}(D_{2})}^{2}+\left\vert v_{s}(t)\right\vert
_{H^{1}(D_{2})}^{2})\right. \\
\\
\left. +h^{4}\left( \left\Vert u(t)\right\Vert
_{H^{2}(D_{1})}^{2}+\displaystyle\left\Vert \frac{\partial
u(t)}{\partial t}\right\Vert _{H^{2}(D_{1})}^{2}\right) +\Delta
r^{2}\left\Vert v(t)\right\Vert
_{L^{2}(D_{2},H_{r}^{2}(0,R_{s}(\cdot )))}^{2}\right\} \\
\\
+C\left( \left\Vert \theta _{u}(t)\right\Vert
_{L^{2}(D_{1})}^{2}+\left\Vert \theta _{v}(t)\right\Vert
_{L^{2}(D_{2},L_{r}^{2}(0,R_{s}(\cdot )))}\right)+\displaystyle
\frac{\underline{k_{2}}}{4} \left\Vert \frac{\partial {\theta
_{v}(t)}}{\partial r} \right\Vert
 _{L^{2}(D_{2},L_{r}^{2}(0,R_{s}(\cdot )))}.%
\end{array}
\label{uv11}
\end{equation}
\end{lemma}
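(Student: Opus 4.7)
The plan is to continue directly from the energy inequality already derived on the line immediately before the statement of Lemma~\ref{lem8}, namely
\begin{equation*}
\tfrac{1}{2}\tfrac{d}{dt}\|\theta_u(t)\|_{L^2(D_1)}^2+\underline{k_1}\bigl\|\tfrac{\partial\theta_u}{\partial x}\bigr\|_{L^2(D_1)}^2 \le C\bigl(\|\rho_u\|_{L^2(D_1)}^2+\|\tfrac{\partial\rho_u}{\partial t}\|_{L^2(D_1)}^2\bigr)+C\|J-J_h\|_{L^2(D_1)}^2+C\|\theta_u\|_{L^2(D_1)}^2,
\end{equation*}
which is obtained by testing with $w_h=\theta_u$ and applying Cauchy--Schwarz and Young on each of the three source terms (the $\underline{k_1}\|\partial_x\theta_u\|^2$ on the left comes from coercivity via assumption \textbf{A2}). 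Since $J$ and $J_h$ vanish on $D_1\setminus D_2$, we may identify $\|J-J_h\|_{L^2(D_1)}^2$ with $\|J-J_h\|_{L^2(D_2)}^2$, and the proof reduces to substituting the three kinds of bounds that have already been prepared.

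First I would dispose of the two $\rho_u$ terms by the standard elliptic projection estimate (\ref{pre5}) applied to $u$ and, after commuting $\partial_t$ with $P_1$ (which is permissible since $P_1$ is a time-independent projection), also to $\partial_t u$, giving
\begin{equation*}
\|\rho_u(t)\|_{L^2(D_1)}^2+\bigl\|\tfrac{\partial\rho_u(t)}{\partial t}\bigr\|_{L^2(D_1)}^2\le Ch^4\bigl(\|u(t)\|_{H^2(D_1)}^2+\|\tfrac{\partial u(t)}{\partial t}\|_{H^2(D_1)}^2\bigr).
\end{equation*}
Next I would invoke (\ref{uv6}) of Lemma~\ref{lem6} to dominate $\|J-J_h\|_{L^2(D_2)}^2$ by $h^2\bigl(\|\phi_1\|_{H^2}^2+\|\phi_2\|_{H^2}^2+h^2\|u\|_{H^2}^2\bigr)+C\|v_s-v_{sh}\|_{L^2(D_2)}^2+C\|\theta_u\|_{L^2(D_1)}^2$, and then insert (\ref{uv6.0}) to replace the $\|v_s-v_{sh}\|^2$ factor; the extra $C\|\theta_u\|^2$ that appears is absorbed into the final $C\|\theta_u\|^2$ term of the target inequality.

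The one subtle point is coefficient bookkeeping on the $\partial_r\theta_v$ term. The bound (\ref{uv6.0}) produces a contribution $\epsilon\|\partial_r\theta_v\|^2$ with $\epsilon$ at our disposal, multiplied by the constant $C$ arising from the chain of substitutions. I would fix $\epsilon$ so that the composed coefficient equals $\underline{k_2}/4$, which yields exactly the last term on the right of (\ref{uv11}); the corresponding $C(\epsilon)\|\theta_v\|^2$ then collapses into the $C\|\theta_v\|_{L^2(D_2,L^2_r)}$ summand stated in the lemma (and into $C(\epsilon)\Delta r^2\|v\|_{L^2(D_2;H^2_r)}^2$, which merges with the other $\Delta r^2$ term already present). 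Multiplying the resulting inequality by $2$ to clear the $\tfrac{1}{2}$ in front of the time derivative and collecting like terms according to their powers of $h$ and $\Delta r$ gives the stated estimate (\ref{uv11}).

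The main obstacle is purely bookkeeping: tracking how the various constants (which depend on $P,Q,K$ through Lemmas~\ref{lem3}--\ref{lem4} and on $\underline{k_1},\underline{k_2}$ through coercivity) propagate through the three successive substitutions, and ensuring that the only surviving gradient term on the right-hand side carries the prescribed coefficient $\underline{k_2}/4$ so that, when (\ref{uv11}) is later added to (\ref{uv7.1}), the $\underline{k_2}\|\partial_r\theta_v\|^2$ on the left of Lemma~\ref{lem7} suffices to absorb it.
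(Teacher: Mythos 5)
Your proposal is correct and follows exactly the paper's route: the energy identity obtained by subtracting (\ref{num1}) from (\ref{w1}) and testing with $\theta_u$, the elliptic-projection bound (\ref{pre5}) applied to $u$ and to $\partial_t u$ (commuting $\partial_t$ through the stationary projector $P_1$), and the chained substitution of (\ref{uv6}) and (\ref{uv6.0}) from Lemma~\ref{lem6} to control $\|J-J_h\|_{L^2(D_2)}^2$, with $\epsilon$ tuned so that the surviving gradient term carries the coefficient $\underline{k_2}/4$. The paper simply compresses the last step to ``By virtue of Lemma~\ref{lem6} we have the following result''; your proposal fills in precisely those details, including the observation that $J=J_h=0$ on $D_s$ so that the $L^2(D_1)$ and $L^2(D_2)$ norms of $J-J_h$ coincide.
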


We are now in a position to establish the main result of this subsection.

\begin{theorem}
\label{Theorem 2} Let $(u,v,\phi _{1},\phi _{2})$ and $(u_{h},v_{h\Delta
r},\phi _{1h},\phi _{2h})$ be the solutions to (\ref{w1})-(\ref{w4}) and (%
\ref{num1})-(\ref{num4}) respectively, with%
\begin{equation*}
\left\Vert u(0)-u_{h}(0)\right\Vert _{L^{2}(D_{1})}\leq Ch^{2}\ \mathrm{and\
}\left\Vert v(0)-v_{h\Delta r}(0)\right\Vert
_{L^{2}(D_{2},L_{r}^{2}(0,R_{s}(\cdot )))}\leq C(h+\Delta r^{2}).
\end{equation*}%
Furthermore, for $0\leq t\leq T_{\mathrm{end}},$ the following regularity
assumptions hold:

R1)$\ u$ and $\displaystyle \frac{\partial u}{\partial t}\in L^{2}(0,T_{%
\mathrm{end}};H^{2}(D_{1})),$

\bigskip

R2) $v$ and $\displaystyle \frac{\partial v(t)}{\partial t}\in L^{2}(0,T_{%
\mathrm{end}};L^{2}(D_{2};H_{r}^{2}(0,R_{s}(\cdot )))),$ and $v_{s}\in
L^{2}(0,T_{\mathrm{end}};H^{1}(D_{2})),$

\bigskip

R3) $\phi _{1}\in L^{2}(0,T_{\mathrm{end}};H^{2}(D_{1}))$, $\phi _{2}\in
L^{2}(0,T_{\mathrm{end}};H^{2}(D_{2})),$ and $J\in L^{2}(0,T_{\mathrm{end}%
};H^{1}(D_{2}));$

\bigskip

then there is a constant $C(t,\underline{k_{1}},\underline{k_{2}})$ such that%
\begin{equation}
\left\Vert u(t)-u_{h}(t)\right\Vert _{L^{2}(D_{1})}^{2}+\left\Vert
v(t)-v_{h\Delta r}(t)\right\Vert _{L^{2}(D_{2},L_{r}^{2}(0,R_{s}(\cdot
)))}^{2}+\int_{0}^{t}\left\Vert \Phi (\tau )-\Phi _{h}(\tau )\right\Vert
_{V}^{2}d\tau \leq C(h^{2}+\Delta r^{2}).  \label{uv12}
\end{equation}
\end{theorem}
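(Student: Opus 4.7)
The plan is to combine the differential inequalities for $\theta_u$ and $\theta_v$ provided by Lemmas \ref{lem7} and \ref{lem8}, apply a Gronwall argument, and then reassemble the full error using the decompositions (\ref{uv1}) and (\ref{uv2}); the potential estimate then follows by inserting the resulting concentration bounds into Theorem \ref{Theorem 1}.

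Concretely, I would first add (\ref{uv7.1}) and (\ref{uv11}). The term $(\underline{k_2}/4)\|\partial_r \theta_v\|_{L^2(D_2;L_r^2)}^2$ appearing on the right of (\ref{uv11}) is absorbed into $\underline{k_2}\|\partial_r \theta_v\|^2_{L^2(D_2;L_r^2)}$ on the left of (\ref{uv7.1}), leaving a coercive left-hand side controlling
\[
\tfrac{d}{dt}\bigl(\|\theta_u\|_{L^2(D_1)}^2 + \|\theta_v\|_{L^2(D_2;L_r^2)}^2\bigr) + \underline{k_1}\|\partial_x\theta_u\|_{L^2(D_1)}^2 + \tfrac{3\underline{k_2}}{4}\|\partial_r\theta_v\|_{L^2(D_2;L_r^2)}^2.
\]
The stray non-squared term $\|\theta_v(t)\|_{L^2(D_2;L_r^2)}$ on the right is dominated by $\tfrac{1}{2}(1 + \|\theta_v\|^2)$ via Young's inequality, so it contributes only to the Gronwall multiplier and an inessential constant.

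Next, I would invoke the regularity assumptions R1--R3 to assert that the data terms on the right-hand side—namely $\|\phi_i\|_{H^2}^2$, $\|u\|_{H^2}^2$, $\|\partial_t u\|_{H^2}^2$, $\|v\|_{L^2(D_2;H_r^2)}^2$, $\|\partial_t v\|_{L^2(D_2;H_r^2)}^2$, $|v_s|_{H^1(D_2)}^2$, and $\|\partial_x J\|_{L^2(D_2)}^2$—all belong to $L^1(0,T_{\mathrm{end}})$. Integrating from $0$ to $t$ and applying Gronwall's lemma yields
\[
\|\theta_u(t)\|_{L^2(D_1)}^2 + \|\theta_v(t)\|_{L^2(D_2;L_r^2)}^2 + \int_0^t\!\bigl(\|\partial_x\theta_u\|^2 + \|\partial_r\theta_v\|^2\bigr)d\tau \leq C(t)\bigl(h^2 + \Delta r^2 + E_0\bigr),
\]
where $E_0 = \|\theta_u(0)\|^2 + \|\theta_v(0)\|^2$. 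The hypotheses on the initial data, combined with (\ref{pre5}), (\ref{uv3}), (\ref{pre10}), bound $\theta_u(0) = (P_1 u^0 - u^0) + (u^0 - u_h^0) = O(h^2)$ and $\theta_v(0) = O(h + \Delta r^2)$, so $E_0 = O(h^2 + \Delta r^2)$ after squaring. Reassembling via the triangle inequality on (\ref{uv1}) and (\ref{uv2}), and using (\ref{pre5}), (\ref{pre10}), (\ref{uv3}) for $\rho_u$, $\rho_v$, and $v - I_0^x v$, gives the first two terms of (\ref{uv12}).

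For the last term of (\ref{uv12}), I would integrate the pointwise-in-time estimate (\ref{eep}) from Theorem \ref{Theorem 1} over $[0,t]$, then bound $\int_0^t \|u - u_h\|_{L^2(D_1)}^2 d\tau$ by the concentration estimate already obtained, and bound $\int_0^t \|v_s - v_{sh}\|_{L^2(D_2)}^2 d\tau$ using Lemma \ref{lem6}; the latter requires the integrated $\|\partial_r \theta_v\|^2$ and $\|\theta_v\|^2$ norms, which were precisely what the Gronwall step delivered. The principal obstacle is bookkeeping: one has to verify that the coupled differential inequality produced by summing (\ref{uv7.1}) and (\ref{uv11}) is genuinely coercive after all absorptions (the small $\epsilon$ choices are implicit in Lemmas \ref{lem7} and \ref{lem8}, but reappear through Lemma \ref{lem6} when (\ref{uv6.0}) is used inside $R_3$ and $R_4$), and that every right-hand side datum is integrable under R1--R3 so that Gronwall closes with a constant $C(t,\underline{k_1},\underline{k_2})$ independent of $h$ and $\Delta r$.
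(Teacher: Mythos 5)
Your strategy—sum the differential inequalities for $\theta_u$ and $\theta_v$, absorb the dissipation terms, Gronwall, then recover the potential error by integrating the estimate of Theorem~\ref{Theorem 1}—is structurally the same as the paper's. The paper incorporates the (already rewritten) estimate for $\|\Phi-\Phi_h\|_V^2$ into the differential inequality \emph{before} applying Gronwall, so that $\int_0^t\|\Phi-\Phi_h\|_V^2\,d\tau$ appears directly in the Gronwall conclusion, whereas you Gronwall first for the concentrations (picking up the integrated $\|\partial_r\theta_v\|^2$ along the way) and only afterwards integrate (\ref{eep}). Both orderings work and buy the same result; yours is perhaps a touch clearer about where the integrated dissipation norm is needed.

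There is, however, one genuine flaw in your argument: your handling of the ``stray non-squared term'' $\|\theta_v(t)\|_{L^2(D_2,L_r^2(0,R_s(\cdot)))}$ appearing in (\ref{uv7.1}) and (\ref{uv11}). You propose bounding it by $\tfrac12\bigl(1+\|\theta_v\|^2\bigr)$ via Young, calling the resulting $\tfrac12$ ``an inessential constant.'' It is not inessential: that constant is $O(1)$, not $O(h^2+\Delta r^2)$, so after integrating and applying Gronwall you would obtain $\|\theta_u(t)\|^2+\|\theta_v(t)\|^2\leq C(t)\bigl(h^2+\Delta r^2+1\bigr)$, which destroys the convergence rate. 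Attempting a weighted Young (balancing with a $\delta$ of size $h^2+\Delta r^2$) simply transfers the damage to the Gronwall exponent. The correct reading is that the missing square is a typo in the paper's statement of Lemmas~\ref{lem7}--\ref{lem8}: if you trace their proofs (in particular the bound (\ref{theta}) and the way $\|v_s-v_{sh}\|^2$ enters through Lemma~\ref{lem6} inside the bounds for $R_3,R_4$ and for $\|J-J_h\|^2$), the term that actually arises is $C\|\theta_v\|_{L^2(D_2,L_r^2(0,R_s(\cdot)))}^2$. With the square restored, no Young trick is needed and your Gronwall argument closes cleanly. The rest of your proposal—absorption of the $\tfrac{\underline{k_2}}{4}\|\partial_r\theta_v\|^2$ from (\ref{uv11}) into the dissipation of (\ref{uv7.1}), integrability of the data terms under R1--R3, and the $O(h^2+\Delta r^2)$ bound on $E_0$—is correct and, on the initial-data point, somewhat more careful than the paper, which simply declares $\theta_u(0)=\theta_v(0)=0$.
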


\begin{proof}
Bounding $\left\Vert u(t)-u_{h}(t)\right\Vert _{L^{2}(D_{1})}^{2}$ as $%
2\left( \left\Vert \rho _{u}(t)\right\Vert ^{2}+\left\Vert \theta
_{u}(t)\right\Vert ^{2}\right) $ and using the estimate of Lemma \ref{lem6}
for $\left\Vert v_{s}(t)-v_{sh}(t)\right\Vert _{L^{2}(D_{2})}$, Theorem \ref%
{Theorem 1} yields%
\begin{equation*}
\begin{array}{l}
\left\Vert \Phi (t)-\Phi _{h}(t)\right\Vert _{V}^{2}\leq Ch^{2}\left\vert
v_{s}(t)\right\vert _{H^{1}(D_{2})}^{2}+Ch^{4}\left\Vert u(t)\right\Vert
_{H^{2}(D_{1})}^{2}+C\left\Vert \theta _{u}\right\Vert _{L^{2}(D_{1})}^{2}
\\
\\
+C(\epsilon )\left\Vert \theta _{v}(t)\right\Vert
_{L^{2}(D_{2},L_{r}^{2}(0,R_{s}(\cdot )))}^{2}+\epsilon \displaystyle%
\left\Vert \frac{\partial \theta _{v}(t)}{\partial r}\right\Vert
_{L^{2}(D_{2},L_{r}^{2}(0,R_{s}(\cdot )))}^{2}.%
\end{array}%
\end{equation*}%
Considering this estimate together with those of Lemmas \ref{lem7} and \ref%
{lem8} it readily follows that%
\begin{equation*}
\begin{array}{r}
\displaystyle\frac{d}{dt}\left( \left\Vert \theta _{u}(t)\right\Vert
_{L^{2}(D_{1})}^{2}+\left\Vert \theta _{v}(t)\right\Vert
_{L^{2}(D_{2},L_{r}^{2}(0,R_{s}(\cdot )))}^{2}\right) +\left\Vert \Phi
(t)-\Phi _{h}(t)\right\Vert _{V}^{2}\leq C(h^{2}+\Delta r^{2}) \\
\\
+C\left( \left\Vert \theta _{u}(t)\right\Vert
_{L^{2}(D_{1})}^{2}+\left\Vert \theta _{v}(t)\right\Vert
_{L^{2}(D_{2},L_{r}^{2}(0,R_{s}(\cdot )))}^{2}\right) .%
\end{array}%
\end{equation*}%
Then, Gronwall inequality yields%
\begin{equation}
\left\Vert \theta _{u}(t)\right\Vert _{L^{2}(D_{1})}^{2}+\left\Vert
\theta _{v}(t)\right\Vert _{L^{2}(D_{2},L_{r}^{2}(0,R_{s}(\cdot
)))}^{2}+\int_{0}^{t}\left\Vert \Phi (\tau )-\Phi _{h}(\tau
)\right\Vert _{V}^{2}d\tau \leq
C(t,\underline{k_{1}},\underline{k_{2}})(h^{2}+\Delta r^{2}).
\label{uv13}
\end{equation}%
The terms $\left\Vert \theta _{u}(0)\right\Vert _{L^{2}(D_{1})}^{2}$ and $%
\left\Vert \theta _{v}(0)\right\Vert
_{L^{2}(D_{2},L_{r}^{2}(0,R_{s}(\cdot )))}^{2}$ are considered to be
zero. From (\ref{uv1}), (\ref{uv2}), (\ref{uv2_1}) and (\ref{uv3}) it follows that%
\begin{equation*}
\begin{array}{r}
\left\Vert u(t)-u_{h}(t)\right\Vert _{L^{2}(D_{1})}^{2}+\left\Vert
v(t)-v_{h\Delta r}(t)\right\Vert _{L^{2}(D_{2},L_{r}^{2}(0,R_{s}(\cdot
)))}^{2}\leq 2\left( \left\Vert \rho _{u}(t)\right\Vert
_{L^{2}(D_{1})}+\left\Vert \theta _{u}(t)\right\Vert _{L^{2}(D_{1})}\right)
\\
\\
+C\left( h^{2}\left\Vert
v(t)\right\Vert_{H^{1}(D_{2};L_{r}^{2}(0,R_{s}(\cdot)))} +\left\Vert
\rho _{v}(t)\right\Vert _{L^{2}(D_{2},L_{r}^{2}(0,R_{s}(\cdot
))}^{2}+\left\Vert \theta
_{v}(t)\right\Vert _{L^{2}(D_{2},L_{r}^{2}(0,R_{s}(\cdot ))}^{2}\right) .%
\end{array}%
\end{equation*}%
So, by combining this inequality with (\ref{uv13}) we obtain (\ref{uv12}).
\end{proof}

\section{Fully discrete model}

We consider now the fully discrete model based on the time stepping backward
Euler scheme. This scheme has been used to discretize in time the equations
of the P2D model either with finite differences \cite{dual}, finite volumes
\cite{SW}, \cite{ML}, or finite elements. For convenience, hereafter we
shall use the notation $a^{n}:=a(x,t_{n})$, where $n$ is a nonnegative
integer and $t_{n}=n\Delta t$, $\Delta t$ being a uniform time step. The
formulation of the fully discrete model is as follows. Assuming that at time
$t_{n-1}$,\ $n=1,2,\ldots ,N$, the solution $(u_{h}^{n-1},v_{h\Delta
r}^{n-1},\phi _{1h}^{n-1},\phi _{1h}^{n-1})\in V_{h}^{(1)}(\overline{D}%
_{1})\times V_{h\Delta r}(\overline{D}_{3})\times W_{h}(\overline{D}%
_{1})\times V_{h}^{(1)}(\overline{D}_{2})$ is known, calculate $%
(u_{h}^{n},v_{h\Delta r}^{n},\phi _{1h}^{n},\phi _{1h}^{n})\in V_{h}^{(1)}(%
\overline{D}_{1})\times V_{h\Delta r}(\overline{D}_{3})\times W_{h}(%
\overline{D}_{1})\times V_{h}^{(1)}(\overline{D}_{2})$ as solution of the
system%
\begin{equation}
\int_{D_{1}}\widetilde{\partial}
_{t}u^{n}{_{h}}w_{h}dx+\int_{D_{1}}k_{1}\frac{du_{h}^{n}}{dx}\frac{dw_{h}}{dx}dx=\int_{D_{1}}a_{1}J_{h}^{n}w_{h}dx\
\forall w_{h}\in V_{h}^{(1)}(\overline{D}_{1}).  \label{fdm1}
\end{equation}

\begin{equation}
\left\{
\begin{array}{l}
\displaystyle\int_{D_{2}}\int_{0}^{R_{s}(x)}\displaystyle\widetilde{\partial}
_{t}v^{n}_{h\Delta r}w_{h\Delta r}r^{2}drdx+\displaystyle\int_{D_{2}}%
\int_{0}^{R_{s}(x)}k_{2}\displaystyle\frac{\partial v^{n}{_{h\Delta r}}}{%
\partial {r}}\frac{\partial w_{h\Delta r}}{\partial r}r^{2}drdx \\
\\
=-\displaystyle\int_{D_{2}}\frac{R_{s}^{2}(x)J_{h}^{n}w_{h\Delta
r}(x,R_{s}(x))}{a_{2}(x)F}dx\ \ \forall w_{h\Delta r}\in V_{h\Delta r}(%
\overline{D}_{3}).%
\end{array}%
\right.  \label{fdm2}
\end{equation}

\begin{equation}
\int_{D_{1}}\kappa (u_{h}^{n})\frac{d\phi _{1h}^{n}}{dx}%
\frac{dw_{h}}{dx}dx=\int_{D_{1}}J_{h}^{n}w_{h}dx\ \ \forall w_{h}\in
V_{h}^{(1)}(\overline{D}_{1}).  \label{fdm3}
\end{equation}

\bigskip

\begin{equation}
\int_{D_{2}}\sigma \frac{d\phi _{2h}^{n}}{dx}\frac{dw_{h}}{dx%
}dx=-\int_{D_{2}}\left( J_{h}^{n}+g\right) w_{h}dx\ \ \forall w_{h}\in
V_{h}^{(1)}(\overline{D}_{2}).  \label{fdm4}
\end{equation}

\bigskip

\begin{equation}
\int_{D_{2}}J_{h}^{n}dx=0\text{ with }\int_{D_{\mathrm{a}%
}}J_{h}^{n}dx=I(t_{n})=-\int_{D_{\text{\textrm{c}}}}J_{h}^{n}dx,
\label{fdm5}
\end{equation}%
where

\begin{equation}
\begin{array}{l}
\widetilde{\partial}
_{t}u_{h}^{n}=\displaystyle\frac{u_{h}^{n}-u_{h}^{n-1}}{\Delta t},
\text{\textrm{\ \ }}\widetilde{\partial}
_{t}v_{h\Delta r}^{n}=\displaystyle\frac{%
v_{h\Delta r}^{n}-v_{h\Delta r}^{n-1}}{\Delta t}, \\
\\
J_{h}^{n}=J(x,u_{_{h}}^{n},v_{sh}^{n},\eta _{h}^{n})=a_{2}(x)i_{0h}^{n}\sinh
\left( \beta \eta _{h}^{n}\right),\ i_{0h}^{n}=i_{0}(u_{h}^{n},v_{sh}^{n}) , \\
\\
\eta _{h}^{n}=\phi _{1h}^{n}-\phi _{2h}^{n}-\alpha^{n}_{h} \ln u_{h}^{n}-\overline{U}%
_{h}(v_{sh}^{n}),\ \alpha^{n}_{h}=\alpha(u_{h}^{n}).%
\end{array}
\label{fdm6}
\end{equation}

\subsection{On the existence and uniqueness of the solution of the fully
discrete model}

To prove that the system (\ref{fdm1})-(\ref{fdm4}) has a unique
solution, we first show that assuming $(u_{h}^{n},v_{h\Delta
r}^{n},v_{sh}^{n})\in V_{h}^{(1)}(\overline{D}_{1})\times V_{h\Delta
r}(\overline{D}_{3})\times V_{h}^{(0)}(\overline{D}_{2})$ and the
assumptions \textbf{A1-A4} hold, the system
(\ref{fdm3})-(\ref{fdm4}) has a unique solution $(\phi
_{1h}^{n},\phi _{2h}^{n})\in W_{h}(\overline{D}_{1})\times
V_{h}^{(1)}(\overline{D}_{2})$; then, returning to the system
(\ref{fdm1})-(\ref{fdm2}) and applying a well-known \ consequence of
Brower\'{}s fixed point theorem, which is
presented as Corollary 1.1 in \cite{GR}, we prove that there exists $%
(u_{h}^{n},v_{h\Delta r}^{n})\in V_{h}^{(1)}(\overline{D}_{1})\times
V_{h\Delta r}(\overline{D}_{3})$.

\begin{lemma}
\label{lem9} Assuming that for all $n$, $(u_{h}^{n},v_{h\Delta
r}^{n},v_{sh}^{n})\in V_{h}^{(1)}(\overline{D}_{1})\times V_{h\Delta r}(%
\overline{D}_{3})\times V_{h}^{(0)}(\overline{D}_{2})$, and the
assumptions \textbf{A1-A4} hold, then the system
(\ref{fdm3})-(\ref{fdm4}) has a unique solution $(\phi
_{1h}^{n},\phi _{2h}^{n})\in W_{h}(\overline{D}_{1})\times
V_{h}^{(1)}(\overline{D}_{2})$.
\end{lemma}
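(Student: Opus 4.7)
The plan is to recast (\ref{fdm3})--(\ref{fdm4}) as a single nonlinear operator equation on the finite-dimensional Hilbert space $V_{h}':=W_{h}(\overline{D}_{1})\times V_{h}^{(1)}(\overline{D}_{2})$ and apply the Minty--Browder theorem \cite{Zeid}, as announced in the Introduction. With $u_{h}^{n}$, $v_{h\Delta r}^{n}$ and $v_{sh}^{n}$ frozen inside $\kappa(\cdot)$ and $J_{h}^{n}$, I would define $A_{h}:V_{h}'\to(V_{h}')^{*}$ by $\langle A_{h}(\Phi_{h}),\Psi_{h}\rangle = a_{h}(\Phi_{h},\Psi_{h})+\langle B_{h}(\Phi_{h}),\Psi_{h}\rangle+\int_{D_{2}}g\,\psi_{2h}\,dx$, so that the discrete system becomes $A_{h}(\Phi_{h}^{n})=0$. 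The four Minty--Browder hypotheses to verify are hemicontinuity, boundedness, strict monotonicity, and coercivity.

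Hemicontinuity and boundedness on bounded sets follow immediately from Corollary \ref{cor1}: $a_{h}$ is bilinear and continuous on $V_{h}'$ under \textbf{A2}--\textbf{A4} (since $\kappa(u_{h}^{n})$ is bounded above and below away from zero), and $B_{h}$ inherits the Lipschitz-type bounds of Lemma \ref{lem5} from the bounds (\ref{estK1})--(\ref{estK2}). The central step is \emph{strict monotonicity}. For $\Phi_{h},\widehat{\Phi}_{h}\in V_{h}'$,
$\langle A_{h}(\Phi_{h})-A_{h}(\widehat{\Phi}_{h}),\Phi_{h}-\widehat{\Phi}_{h}\rangle=a_{h}(\Phi_{h}-\widehat{\Phi}_{h},\Phi_{h}-\widehat{\Phi}_{h})+\langle B_{h}(\Phi_{h})-B_{h}(\widehat{\Phi}_{h}),\Phi_{h}-\widehat{\Phi}_{h}\rangle$,
where the first summand is bounded below by $\kappa_{0}\,|\phi_{1h}-\widehat{\phi}_{1h}|_{H^{1}(D_{1})}^{2}+\sigma_{0}\,|\phi_{2h}-\widehat{\phi}_{2h}|_{H^{1}(D_{2})}^{2}$, and the second by $C\,\|(\phi_{2h}-\widehat{\phi}_{2h})-(\phi_{1h}-\widehat{\phi}_{1h})\|_{L^{2}(D_{2})}^{2}$ via the Minty-type estimate (\ref{pre17.1_1}). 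Poincar\'e--Wirtinger on $W_{h}\subset W(D_{1})$ promotes the $H^{1}$-seminorm of $\phi_{1h}-\widehat{\phi}_{1h}$ to the full $H^{1}$-norm, and the Young balancing of (\ref{eep3.1})--(\ref{eep4}) in the proof of Theorem \ref{Theorem 1} (choosing $\gamma\in(0,1)$ so that the negative $L^{2}(D_{2})$-contribution of $\phi_{1h}-\widehat{\phi}_{1h}$ is absorbed while the positive $L^{2}(D_{2})$-contribution of $\phi_{2h}-\widehat{\phi}_{2h}$ survives) yields $\langle A_{h}(\Phi_{h})-A_{h}(\widehat{\Phi}_{h}),\Phi_{h}-\widehat{\Phi}_{h}\rangle\geq\alpha\,\|\Phi_{h}-\widehat{\Phi}_{h}\|_{V}^{2}$ for some $\alpha>0$ independent of $\Phi_{h},\widehat{\Phi}_{h}$. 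Strict monotonicity gives uniqueness at once, and taking $\widehat{\Phi}_{h}=0$ delivers the coercivity bound $\langle A_{h}(\Phi_{h}),\Phi_{h}\rangle\geq\alpha\,\|\Phi_{h}\|_{V}^{2}-\|A_{h}(0)\|_{(V_{h}')^{*}}\,\|\Phi_{h}\|_{V}$, since $A_{h}(0)$ lies in $(V_{h}')^{*}$ (its components involve $J_{h}^{n}|_{\Phi_{h}=0}=a_{2}i_{0h}^{n}\sinh(\beta\,(-\alpha_{h}^{n}\ln u_{h}^{n}-\overline{U}_{h}^{n}))$ and $g$, both in $L^{\infty}(D_{2})$ by \textbf{A1} and \textbf{A4}). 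Minty--Browder then produces the required $\Phi_{h}^{n}\in V_{h}'$.

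The main obstacle is the strict monotonicity step, because $V_{h}^{(1)}(\overline{D}_{2})$ carries no zero-mean constraint on $\phi_{2h}$ and so the $L^{2}(D_{2})$-part of $\|\phi_{2h}\|_{H^{1}(D_{2})}$ is not controlled by $a_{2}$ alone; the growth of $\sinh$ inside $B_{h}$ must be exploited through the Young/Poincar\'e balancing just described. Everything else is a direct transcription of the continuous arguments of Lemma \ref{lem5} and Theorem \ref{Theorem 1} to the finite-dimensional setting.
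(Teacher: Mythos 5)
Your proposal is correct and follows essentially the same Minty--Browder route as the paper: existence from monotonicity, hemicontinuity and coercivity of a nonlinear operator built from $a_h$ and the $J_h^n$-term, with the crucial lower bound on $a_h + B_h$ obtained exactly as in (\ref{eep4}) (Poincar\'e--Wirtinger on $W_h(\overline{D}_1)$ for $\phi_{1h}$, $\sigma_0$ for the $H^1$-seminorm of $\phi_{2h}$, the positivity of $a_2 i_{0h}^n$ plus $\cosh\geq 1$ for the Buttler--Volmer term, and Young balancing to absorb the cross term). There are two cosmetic variations worth noting. First, you fold both constant source terms ($g$ and, implicitly, the potential-independent part of $J_h^n$) into the operator $A_h$, so $A_h(0)\neq 0$ and coercivity requires the correction $-\|A_h(0)\|_{(V_h')^*}\|\Phi_h\|_V$; the paper instead introduces the ``frozen'' flux $\widehat{J}_h^n$ (evaluated at $\phi_{1h}=\phi_{2h}=0$), defines $\widehat{B}_h$ from the difference $J_h^n - \widehat{J}_h^n$ so that $\widehat{B}_h(0)=0$, and shifts the remainder to the right-hand side, giving the cleaner coercivity $\langle A_h(\Phi_h),\Phi_h\rangle\geq\alpha\|\Phi_h\|_V^2$ outright. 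Both are sound. Second, you obtain uniqueness directly from the uniform (strict) monotonicity, which is tidier, whereas the paper proves uniqueness separately in the style of \cite{WXZ}: subtract two putative solutions, test with the differences $z_{1h},z_{2h}$, invoke $\partial J/\partial\eta>0$ from \textbf{A3} to force $dz_{1h}/dx=0$ (hence $z_{1h}=0$ via the zero-mean constraint of $W_h$) and then $z_{2h}=z_{1h}=0$. Both uniqueness arguments use the same two ingredients --- the zero-mean constraint on $\phi_{1h}$ and the strict monotonicity of $J$ in $\eta$ --- so the mathematical content is identical; yours simply packages them as a one-line consequence of the bound you already established for coercivity.
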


\begin{proof}
Looking at (\ref{fdm3})-(\ref{fdm4}) and in order to apply
Minty-Browder theorem to prove
the existence of a solution, we define\ the functions%
\begin{equation*}
\widehat{\eta }_{h}^{n}:=-\alpha_{h}^{n} \ln u_{h}^{n}-\overline{U}
_{h}(v_{sh}^{n})\ \mathrm{and\ }\widehat{J}%
_{h}^{n}:=J_{h}^{n}(x,u_{h}^{n},v_{sh}^{n},\widehat{\eta }_{h}^{n}),
\end{equation*}%
it is worth noticing that $\widehat{\eta }_{h}^{n}$ is equal to
$\eta _{h}^{n}$ when the potentials $\phi _{1h}^{n}$ and $\phi
_{1h}^{n}$ are zero. Now, going back to Section 4.2 and using
$\widehat{J}_{h}^{n}$, we
define the operators $\widehat{B}_{h}:V_{h}\rightarrow V_{h}^{\ast }$ and $%
A_{h}:V_{h}\rightarrow V_{h}^{\ast }$ as follows: for all
$n=1,2,..,N$,
\begin{equation*}
\left\langle \widehat{B}_{h}(\Phi _{h}^{n}),\Psi _{h}\right\rangle
=\int_{D_{2}}(J_{h}^{n}-\widehat{J}_{h}^{n})(\psi _{2h}-\psi
_{1h})dx\ \ \forall \Psi _{h}\in V_{h}.
\end{equation*}%
and%
\begin{equation*}
\left\langle A_{h}(\Phi _{h}^{n}),\Psi _{h}\right\rangle =a_{h}(\Phi
_{h}^{n},\Psi _{h})+\left\langle \widehat{B}_{h}(\Phi _{h}^{n}),\Psi
_{h}\right\rangle .
\end{equation*}%
Notice that when $\Phi _{h}^{n}=(0,0)$, $\left\langle
\widehat{B}_{h}(\Phi _{h}^{n}),\Psi _{h}\right\rangle =0$ because
$J_{h}^{n}=\widehat{J}_{h}^{n}$. Now, we can recast
(\ref{fdm3})-(\ref{fdm4}) as
follows. Find $\Phi _{h}^{n}:=(\phi _{1}^{n},\phi _{1}^{n})\in W_{h}(%
\overline{D}_{1})\times V_{h}^{(1)}(\overline{D}_{2})$ such that%
\begin{equation}
\left\langle A_{h}(\Phi _{h}^{n}),\Psi _{h}\right\rangle
=-\int_{D_{2}}g\psi _{2h}dx-\int_{D_{2}}\widehat{J}_{h}^{n}(\psi
_{2h}-\psi _{1h})dx\ \ \forall \Psi _{h}\in V_{h}.  \label{opA}
\end{equation}%
We can prove, using the same arguments as in Lemma \ref{lem5}, that the operator $%
\widehat{B}_{h}$ is monotone, bounded and continuous satisfying an
inequality as (\ref{pre17}); since the bilinear form $a_{h}$ is
continuous and semi-definite positive, then it follows that the
operator $A_{h}$ is monotone, bounded and continuous satisfying an
inequality as (\ref{pre17}). In order to prove that (\ref{opA}) has
a solution it remains to show that $A_{h}$ is coercive, i.e.,
$\forall \Phi _{h}^{n}\in W_{h}(\overline{D}_{1})\times
V_{h}^{(1)}(\overline{D}_{2})$,
there exists a positive constant $\alpha $ such that%
\begin{equation*}
\left\langle A_{h}(\Phi _{h}^{n}),\Phi _{h}^{n}\right\rangle \geq
\alpha \left\Vert \Phi _{h}^{n}\right\Vert _{V}^{2}.
\end{equation*}%
This can be easily done by considering the following facts: 1)\
$A_{h}$ is monotone; 2) it is easy to check, using the same
arguments as in Theorem \ref{Theorem 1} to prove (\ref{eep4}), that
$\forall \Phi _{h}^{n}$, $\overline{\Phi }_{h}^{n}$ $\in
W_{h}(\overline{D}_{1})\times V_{h}^{(1)}(\overline{D}_{2})$%
\begin{equation*}
a_{h}(\Phi _{h}^{n}-\overline{\Phi }_{h}^{n},\Phi _{h}^{n}-\overline{\Phi }%
_{h}^{n})+\left\langle \widehat{B}_{h}(\Phi _{h}^{n})-\widehat{B}_{h}(%
\overline{\Phi }_{h}^{n}),\Phi _{h}^{n}-\overline{\Phi }_{h}^{n}\right%
\rangle \geq \alpha \left\Vert \Phi _{h}^{n}-\overline{\Phi }%
_{h}^{n}\right\Vert _{V}^{2},
\end{equation*}%
then taking$\ \overline{\Phi }_{h}^{n}=(0,0)$ it follows the
coerciveness of $A_{h}$. Hence, the Minty-Browder theorem
\cite{Zeid} guaranties the existence of a solution $\Phi _{h}^{n}$
of (\ref{opA}). To prove the uniqueness of this
solution we follow the argument put forward in \cite{WXZ} to prove the uniqueness of the exact solution,
and assume that there two solutions $%
\Phi _{h}^{n}:=(\phi _{1h}^{n},\phi _{2h}^{n})$ and $\overline{\Phi }%
_{h}^{n}:=\left( \overline{\phi }_{1h}^{n},\overline{\phi
}_{2h}^{n}\right) $ of (\ref{fdm3})-(\ref{fdm4}), then setting,
$z_{1h}=\phi _{1h}^{n}-\overline{\phi }_{1h}^{n}$ and $z_{2h}=\phi
_{2h}^{n}-\overline{\phi }_{2h}^{n}$,\ from (\ref{fdm3})\ it
follows that%
\begin{equation*}
\int_{D_{1}}\kappa (u_{h}^{n})\frac{dz_{1h}}{dx}\frac{dw_{h}}{dx}%
dx=\int_{D_{2}}\left( J_{h}^{n}-\overline{J}_{h}^{n}\right) w_{h}dx\
\ \forall w_{h}\in V_{h}^{\left( 1\right) }(\overline{D}_{1})
\end{equation*}%
and from (\ref{fdm4})%
\begin{equation*}
\int_{D_{1}}\sigma
\frac{dz_{2h}}{dx}\frac{dv_{h}}{dx}dx=-\int_{D_{2}}\left(
J_{h}^{n}-\overline{J}_{h}^{n}\right) v_{h}dx\ \ \forall v_{h}\in
V_{h}^{\left( 1\right) }(\overline{D}_{2}),
\end{equation*}%
where\ $J_{h}^{n}=J_{h}^{n}(x,u_{h}^{n},v_{sh}^{n},\eta _{h}^{n})$ and $%
\overline{J}_{h}^{n}=J_{h}^{n}(x,u_{h}^{n},v_{sh}^{n},\overline{\eta }%
_{h}^{n})$, with $\overline{\eta }_{h}^{n}=\overline{\phi }_{2h}^{n}-%
\overline{\phi }_{1h}^{n}-\alpha_{h}^{n} \ln u_{h}^{n}-\overline{U}
_{h}(v_{sh}^{n}) $. Setting $w_{h}=z_{1h}$ and $v_{h}=z_{2h}$ and
applying the mean value
theorem one readily obtains that%
\begin{equation*}
\int_{D_{1}}\kappa (u_{h}^{n})\left( \frac{dz_{1h}}{dx}\right)
^{2}dx+\int_{D_{1}}\sigma \left( \frac{dz_{2h}}{dx}\right)
^{2}dx+\int_{D_{2}}\frac{\partial J_{h}^{n}\left( \xi \right)
}{\partial \eta^{n} }(z_{2h}-z_{1h})^{2}=0,
\end{equation*}
here $\frac{\partial J_{h}^{n}\left( \xi \right) }{\partial \eta^{n}
}>0$ according to assumption \textbf{A3}. The first term of this
expression
implies that for all $n$,%
\begin{equation*}
z_{1h}=\phi _{1h}^{n}-\overline{\phi }_{1h}^{n}=K_{1},
\end{equation*}%
but the constant $K_{1}=0$ because $\phi _{1h}^{n}$ and $\overline{\phi }%
_{1h}^{n}$ are in $W_{h}(\overline{D}_{1})$, so $\phi _{1h}^{n}=\overline{%
\phi }_{1h}^{n}$ Similarly, from the second and third terms it follows that $%
z_{2h}=0$, and consequently $\phi _{2h}^{n}=\overline{\phi
}_{2h}^{n}$. Hence, we have just proved that for all $n$ there is a
unique solution $(\phi _{1h}^{n},\phi _{2h}^{n})$.
\end{proof}

\begin{lemma}
\label{lem10} Let $(\phi _{1h}^{n},\phi _{2h}^{n})\in W_{h}(\overline{D}%
_{1})\times V_{h}^{(1)}(\overline{D}_{2})$ be the solution to (\ref{fdm3})-(%
\ref{fdm4}). There exists a unique solution $\left( u_{h}^{n},v_{h\Delta
r}^{n}\right) \in V_{h}^{(1)}(\overline{D}_{1})\times V_{h\Delta r}(%
\overline{D}_{3})$ to the system (\ref{fdm1})-(\ref{fdm2}).
\end{lemma}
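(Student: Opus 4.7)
The plan is to recast the coupled system (\ref{fdm1})--(\ref{fdm2}) as the equation $P(u_{h}^{n},v_{h\Delta r}^{n})=0$ on the finite-dimensional Hilbert space $X := V_{h}^{(1)}(\overline{D}_{1}) \times V_{h\Delta r}(\overline{D}_{3})$, equipped with the inner product inherited from $L^{2}(D_{1}) \times L^{2}(D_{2},L_{r}^{2}(0,R_{s}(\cdot)))$. For each $(u_{h},v_{h\Delta r}) \in X$, Lemma \ref{lem9} furnishes a unique pair $\Lambda(u_{h},v_{h\Delta r}) = (\phi_{1h},\phi_{2h})$ solving the potential subsystem, and continuity (indeed local Lipschitz) of $\Lambda$ follows by subtracting two copies of (\ref{fdm3})--(\ref{fdm4}), testing with the potential difference, and reusing the coercivity/monotonicity argument of Lemma \ref{lem5} together with the Lipschitz bounds on $\kappa$ and $J$. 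Composing with $J_{h}^{n}$ and using Riesz representation, I define a continuous operator $P: X \to X$ whose zeros are precisely the solutions of the coupled system.

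For existence I would invoke Corollary 1.1 of \cite{GR}, which requires $\bigl(P(u,v),(u,v)\bigr)_{X} \geq 0$ on a sufficiently large sphere in $X$. Testing (\ref{fdm1}) with $u_{h}^{n}$ and (\ref{fdm2}) with $v_{h\Delta r}^{n}$, using $(a-b)a \geq \tfrac{1}{2}(a^{2}-b^{2})$ on the Euler term, and the lower bounds $\underline{k_{1}},\underline{k_{2}}$, one obtains
\begin{equation*}
\bigl(P(u,v),(u,v)\bigr)_{X} \geq \frac{1}{2\Delta t}\bigl(\|u\|^{2}+\|v\|^{2}\bigr) + \underline{k_{1}}|u|_{H^{1}}^{2} + \underline{k_{2}}\|v_{r}\|^{2} - C\bigl(\|u\|+\|v\|\bigr) - \frac{1}{2\Delta t}\bigl(\|u_{h}^{n-1}\|^{2}+\|v_{h\Delta r}^{n-1}\|^{2}\bigr),
\end{equation*}
where the reaction integrals are controlled by the uniform bound $\|J_{h}^{n}\|_{L^{\infty}} \leq C$ provided by assumption \textbf{A4} (which bounds $\phi_{ih}^{n}$ and $v_{sh}^{n}/v_{\max}$, so $|\sinh(\beta\eta_{h}^{n})|$ is bounded). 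Choosing $r = \|(u,v)\|_{X}$ large enough renders the right-hand side positive, so Brouwer's corollary produces a zero of $P$.

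For uniqueness, let $(u_{h}^{n,i},v_{h\Delta r}^{n,i})$, $i=1,2$, be two solutions with potentials $\Phi_{h}^{n,i} = \Lambda(u_{h}^{n,i},v_{h\Delta r}^{n,i})$. Setting $w = u_{h}^{n,1}-u_{h}^{n,2}$ and $z = v_{h\Delta r}^{n,1}-v_{h\Delta r}^{n,2}$, subtracting (\ref{fdm1})--(\ref{fdm2}) and testing with $(w,z)$ yields
\begin{equation*}
\frac{1}{\Delta t}\bigl(\|w\|^{2}+\|z\|^{2}\bigr) + \underline{k_{1}}|w|_{H^{1}}^{2} + \underline{k_{2}}\|z_{r}\|^{2} \leq \int_{D_{1}}a_{1}(J_{h}^{n,1}-J_{h}^{n,2})w\,dx - \int_{D_{2}}\frac{R_{s}^{2}}{a_{2}F}(J_{h}^{n,1}-J_{h}^{n,2})z(\cdot,R_{s}(\cdot))\,dx.
\end{equation*}
The auxiliary Lipschitz estimate $\|\Phi_{h}^{n,1}-\Phi_{h}^{n,2}\|_{V}^{2} \leq C(\|w\|^{2}+\|v_{s}^{n,1}-v_{s}^{n,2}\|_{L^{2}(D_{2})}^{2})$, obtained by mimicking the proof of Theorem \ref{Theorem 1}, combined with Lemma \ref{lem4} gives $\|J_{h}^{n,1}-J_{h}^{n,2}\|_{L^{2}(D_{2})}^{2} \leq C(\|w\|^{2}+\|v_{s}^{n,1}-v_{s}^{n,2}\|_{L^{2}(D_{2})}^{2})$. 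Controlling the surface trace as in the proof of Lemma \ref{lem6} via Lemma \ref{lem2} converts $\|v_{s}^{n,1}-v_{s}^{n,2}\|_{L^{2}(D_{2})}^{2}$ into $\epsilon\|z_{r}\|^{2}+C(\epsilon)\|z\|^{2}$, which after Young's inequality and absorption into the elliptic terms on the left leaves $\bigl(\tfrac{1}{\Delta t}-C_{\ast}\bigr)(\|w\|^{2}+\|z\|^{2}) \leq 0$, forcing $w=z=0$ for $\Delta t$ sufficiently small. The main obstacle will be this uniqueness step, since the nonlinear dependence of the potentials on $(u_{h}^{n},v_{h\Delta r}^{n})$ through $\Lambda$ demands a careful Lipschitz chain $(w,z) \to \Phi_{h}^{n,1}-\Phi_{h}^{n,2} \to J_{h}^{n,1}-J_{h}^{n,2}$; the Brouwer-based existence argument is comparatively routine once coercivity is secured by \textbf{A4}'s $L^{\infty}$ bound on $J_{h}^{n}$.
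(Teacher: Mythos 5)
Your approach is genuinely different from the paper's, and in one respect it is conceptually cleaner. The paper's own proof \emph{decouples} (\ref{fdm1}) from (\ref{fdm2}): it holds the potentials $(\phi_{1h}^n,\phi_{2h}^n)$ fixed (as the lemma statement allows) but \emph{also} freezes $v_{sh}$ as an arbitrary element of $S_Q^*$, then proves existence and uniqueness of $u_h^n$ via the Brouwer corollary applied to a single equation $F_h(u_h^n)=0$, and closes by saying the identical argument gives $v_{h\Delta r}^n$. Since $J_h^n$ depends simultaneously on $u_h^n$ and $v_{sh}^n$, treating the two equations separately with the other concentration frozen leaves the actual coupling untreated. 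You instead work on the product space $X$, which respects that coupling. But you go further than the lemma asks: by composing with the map $\Lambda$ furnished by Lemma \ref{lem9}, your operator $P$ has the full coupled system (\ref{fdm1})--(\ref{fdm4}) as its zero set, whereas the lemma treats the potentials as given input. The paper's freezing of the potentials is precisely what lets it avoid your hardest step, the Lipschitz chain $(w,z)\to\Phi_h^{n,1}-\Phi_h^{n,2}\to J_h^{n,1}-J_h^{n,2}$; with the potentials frozen, $J_h^{n}$ depends on $\chi_h$ alone, and Lemmas \ref{lem3} and \ref{lem4} directly give the needed Lipschitz bound and a time-step restriction $\Delta t<1/C_2$.

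Two gaps in your sketch deserve attention. First, $\Lambda$ is only furnished by Lemma \ref{lem9} under the standing hypotheses, which include \textbf{A4}; you then invoke Brouwer's corollary on a sphere of radius $r$ ``large enough,'' and simultaneously claim $\|J_h^n\|_{L^\infty}\le C$ via \textbf{A4}'s bounds $\tfrac1P\le u_h\le P$, $\tfrac1Q\le v_{sh}/v_{\max}\le 1-\tfrac1Q$. But \textbf{A4} is an assumption about the (sought) solution, not a property of every $(u_h,v_{h\Delta r})$ on a large sphere in $X$; indeed membership in $S_P\times S_Q^*$ forces a uniform $L^2$ bound, so ``$r$ large enough'' and ``arguments stay in $S_P\times S_Q^*$'' pull against each other. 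You should state explicitly that $\Lambda$ is defined and bounded on the whole ball you use, or restrict the argument to the convex bounded set $S_P\cap V_h^{(1)}\times S_Q^*\cap V_{h\Delta r}$ and verify Brouwer's hypotheses there. (The paper's proof has the same tension and does not resolve it either.) Second, the Lipschitz estimate for $\Lambda$ you need for continuity of $P$ and for the uniqueness chain is asserted but not derived; the argument you outline is plausible, but the constant will inherit the $P,Q,K$ dependence, and you must verify it is uniform on the domain of $\Lambda$ you actually use.
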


\begin{proof}
We start proving the existence of $u_{h}^{n}\in V_{h}^{(1)}(\overline{D}%
_{1}) $ as solution of (\ref{fdm1}). To this end, we write (\ref{fdm1}) as $%
F_{h}(u_{h}^{n})=0$, where $F_{h}:V_{h}^{(1)}(\overline{D}_{1})\rightarrow
V_{h}^{(1)}(\overline{D}_{1})$ is a continuous mapping defined by the
relation%
\begin{equation*}
\begin{array}{r}
\displaystyle\int_{D_{1}}F_{h}(\chi _{h})w_{h}dx=\displaystyle%
\int_{D_{1}}\left( \chi _{h}-u_{h}^{n-1}\right) w_{h}dx+\Delta
t\int_{D_{1}}k_{1}\frac{d\chi _{h}^{n}}{dx}\frac{dw_{h}}{dx}%
dx \\
\\
-\displaystyle\Delta t\int_{D_{1}}a_{1}J_{h}^{n}(\chi _{h})w_{h}dx=0\ \
\forall w_{h}\in V_{h}^{(1)}(\overline{D}_{1}),%
\end{array}%
\end{equation*}%
here,\ $J_{h}^{n}(\chi _{h})=J(x,\chi _{h},v_{sh},\phi _{1h}^{n},\phi _{2h}^{n},%
\overline{U}_{h}(v_{sh})),$ with $v_{sh}$ being picked up from
$S_{Q}^{\ast } $ because we assume that $v_{h\Delta r}^{n}$ belongs
to this space;
moreover, we also assume that $\chi _{h}$ is in $S_{P}$. According to Brower%
\'{}s fixed point theorem, the equation $F_{h}(\chi _{h})=0$ has a solution$%
\ \chi _{h}\in B_{q}:=\left\{ v_{h}\in V_{h}^{(1)}(\overline{D}%
_{1}):\left\Vert v_{h}\right\Vert _{L^{2}(D_{1})}\leq q\right\} $,
if $\int_{D_{1}}F_{h}(\chi _{h})\chi_{h}dx>0$ for $\left\Vert \chi
_{h}\right\Vert _{L^{2}(D_{1})}=q$. On account of the assumptions
$v_{sh}\in S_{Q}^{\ast }$ and $\chi _{h}\in S_{P}$, it follows that
there exists a constant $C_{1}=C_{1}(P,Q,K)$ such that\
$\int_{D_{1}}a_{1}J_{h}^{n}(\chi _{h})\chi_{h}dx\leq C_{1}\left\Vert
\chi _{h}\right\Vert _{L^{2}(D_{1})}$.
Hence,%
\begin{equation*}
\begin{array}{r}
\displaystyle\int_{D_{1}}F_{h}(\chi _{h})\chi _{h}dx\geq \left\Vert
\chi _{h}\right\Vert _{L^{2}(D_{1})}^{2}-\left\Vert
u_{h}^{n-1}\right\Vert
_{L^{2}(D_{1})}^{2}+\Delta tk_{0}\left\Vert \frac{d\chi _{h}^{n}}{%
dx}\right\Vert _{L^{2}(D_{1})}^{2}-\Delta tC_{1}\left\Vert \chi
_{h}\right\Vert _{L^{2}(D_{1})} \\
\\
\geq \left\Vert \chi _{h}\right\Vert _{L^{2}(D_{1})}^{2}-\left\Vert
u_{h}^{n-1}\right\Vert _{L^{2}(D_{1})}^{2}-\Delta tC_{1}\left( 1+\left\Vert
\chi _{h}\right\Vert _{L^{2}(D_{1})}\right) \left\Vert \chi _{h}\right\Vert
_{L^{2}(D_{1})}.%
\end{array}%
\end{equation*}%
Then, taking $\Delta t\leq \Delta t_{0}<1/C_{1}$, $\int_{D_{1}}F_{h}(\chi
_{h})\chi _{h}dx$ is positive for $\left\Vert \chi _{h}\right\Vert
_{L^{2}(D_{1})}$ sufficiently large. This shows the existence of the
solution $u_{h}^{n}\in V_{h}^{(1)}(\overline{D}_{1})$. Next, we prove the
uniqueness. To this end, we consider that there exist $X$ and $Y$ $\in
V_{h}^{(1)}(\overline{D}_{1})$ satisfying (\ref{fdm1}), so%
\begin{equation*}
\int_{D_{1}}\left( X-Y\right) w_{h}dx+\Delta t\int_{D_{1}}k_{1}\frac{%
d(X-Y)}{dx}\frac{dw_{h}}{dx}dx=\Delta t\int_{D_{1}}a_{1}\left(
J_{h}^{n}(X)-J_{h}^{n}(Y)\right) w_{h}dx\ \ \forall w_{h}\in
V_{h}^{(1)}(\overline{D}_{1}).
\end{equation*}%
Setting $w_{h}=X-Y$ and invoking the arguments of Lemmas \ref{lem3} and \ref%
{lem4} yields%
\begin{equation*}
\left\Vert X-Y\right\Vert _{L^{2}(D_{1})}^{2}+k_{1}\Delta t\left\Vert \frac{%
d(X-Y)}{dx}\right\Vert _{L^{2}(D_{1})}^{2}\leq \Delta
tC_{2}\left\Vert X-Y\right\Vert _{L^{2}(D_{1})}^{2},
\end{equation*}%
where the constant $C_{2}=C_{2}(P,Q,K)$. Thus, taking $\Delta t\leq \Delta
t_{0}<1/C_{2}$ it follows that $X=Y$. It remains to prove the existence and
uniqueness of $v_{h\Delta r}^{n}$, but the arguments to be used for such a
proof are the same as for $u_{h}^{n}$, so we omit them.
\end{proof}

\subsection{Error estimates for the fully discrete solution}

As in Section 4.3, we write for $t=t_{n}$%
\begin{equation}
\left\{
\begin{array}{l}
u^{n}-u_{h}^{n}=\rho _{u}^{n}+\theta _{u}^{n}, \\
\\
v^{n}-v_{h\Delta r}^{n}=v^{n}-I_{0}^{x}v^{n}+I_{0}^{x}\rho _{v}^{n}+\theta
_{v}^{n}.%
\end{array}%
\right.  \label{fdm7}
\end{equation}

\begin{theorem}
\label{Theorem 3} Let $(u_{h}^{n},v_{h\Delta r}^{n},\phi _{1}^{n},\phi
_{2}^{n})$ be the solution to (\ref{fdm1})-(\ref{fdm4}). Then, under proper
regularity assumptions there exists a constant $C$ such that for $\Delta t$
small%
\begin{equation}
\left\Vert u^{n}-u_{h}^{n}\right\Vert _{L^{2}(D_{1})}^{2}+\left\Vert
v^{n}-v_{h\Delta r}^{n}\right\Vert _{L^{2}(D_{2},L_{r}^{2}(0,R_{s}(\cdot
)))}^{2}+\Delta t\sum_{j=1}^{t_{n}}\left\Vert \Phi ^{j}-\Phi
_{h}^{j}\right\Vert _{V}^{2}d\tau \leq C(h^{2}+\Delta r^{2}+\Delta t^{2}).
\label{fdm8}
\end{equation}%
The constant $C$ is of the form $C(\Gamma )\exp
(C(\underline{k_{1}},\underline{k_{2}})t_{n}$, $C(\Gamma )$ being
another constant that depends on the exact solution
$(u,v,\phi_{1},\phi_{2})$, see (\ref{cgamma}) below.
\end{theorem}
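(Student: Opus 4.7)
The plan is to mimic the proof of Theorem \ref{Theorem 2} step by step, replacing continuous time derivatives by backward differences and the continuous Gronwall inequality by its discrete counterpart. First I use the splitting (\ref{fdm7}) and apply Theorem \ref{Theorem 1} at each $t_n$: its proof is pointwise in $t$ and transfers without modification to the fully discrete potential system (\ref{fdm3})--(\ref{fdm4}), so that
\begin{equation*}
\|\Phi^n-\Phi_h^n\|_V^2 \le Ch^2\bigl(\|\phi_1^n\|_{H^2(D_1)}^2+\|\phi_2^n\|_{H^2(D_2)}^2\bigr) + C\|u^n-u_h^n\|_{L^2(D_1)}^2+C\|v_s^n-v_{sh}^n\|_{L^2(D_2)}^2.
\end{equation*}
Lemmas \ref{lem4} and \ref{lem6} are likewise pointwise in $t$ and yield, at each $t_n$, bounds on $\|J^n-J_h^n\|_{L^2(D_2)}^2$ and $\|v_s^n-v_{sh}^n\|_{L^2(D_2)}^2$ in terms of $\|\theta_u^n\|_{L^2(D_1)}^2$, $\|\theta_v^n\|_{L^2(D_2,L_r^2)}^2$ and an $\epsilon$-portion of $\|\partial_r\theta_v^n\|_{L^2(D_2,L_r^2)}^2$.

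Next I derive the discrete error equations. Subtracting (\ref{fdm1}) from (\ref{w1}) at $t=t_n$ and testing with $w_h=\theta_u^n$, and using the standard backward-Euler identity $\int_{D_1}\widetilde{\partial}_t\theta_u^n\,\theta_u^n\,dx\ge \frac{1}{2\Delta t}(\|\theta_u^n\|_{L^2}^2-\|\theta_u^{n-1}\|_{L^2}^2)$, I obtain the fully discrete analog of Lemma \ref{lem8}. Compared to the semidiscrete case, two new residuals appear on the right: (i) the time-consistency term $\partial_t u(t_n)-\widetilde{\partial}_t u^n$, which by Taylor's formula with integral remainder satisfies $\|\partial_t u^n-\widetilde{\partial}_t u^n\|_{L^2}^2\le \frac{\Delta t}{3}\int_{t_{n-1}}^{t_n}\|\partial_{tt}u\|_{L^2(D_1)}^2\,d\tau$; and (ii) the elliptic-projection increment $\widetilde{\partial}_t\rho_u^n=\Delta t^{-1}\int_{t_{n-1}}^{t_n}\partial_t\rho_u\,d\tau$, whose $L^2$-norm squared is controlled by $\Delta t^{-1}\int_{t_{n-1}}^{t_n}\|\partial_t\rho_u\|_{L^2}^2\,d\tau\le Ch^4\Delta t^{-1}\int_{t_{n-1}}^{t_n}\|\partial_t u\|_{H^2}^2\,d\tau$, which after multiplication by $\Delta t$ and summation over $n$ gives a clean $O(h^4)$ bound. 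An analogous computation with $w_{h\Delta r}=\theta_v^n$ in the discrete version of (\ref{w2}) produces the discrete analog of Lemma \ref{lem7}, with an extra consistency term involving $\partial_{tt}v$.

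Adding the two recurrences, using the pointwise potential estimate above to re-introduce $\|\Phi^n-\Phi_h^n\|_V^2$ on the left, choosing the $\epsilon$-parameters in Lemma \ref{lem6} small enough that the $\|\partial_r\theta_v^n\|^2$ contribution on the right is absorbed by the $\underline{k_2}\|\partial_r\theta_v^n\|^2$ coming from coercivity on the left, multiplying by $\Delta t$ and summing from $j=1$ to $n$, I arrive at
\begin{equation*}
\|\theta_u^n\|_{L^2(D_1)}^2+\|\theta_v^n\|_{L^2(D_2,L_r^2)}^2+\Delta t\sum_{j=1}^n\|\Phi^j-\Phi_h^j\|_V^2 \le C(\Gamma)(h^2+\Delta r^2+\Delta t^2)+C\,\Delta t\sum_{j=1}^n\bigl(\|\theta_u^j\|_{L^2}^2+\|\theta_v^j\|_{L^2(D_2,L_r^2)}^2\bigr),
\end{equation*}
where
\begin{equation} \label{cgamma}
C(\Gamma)=C\bigl(\|\phi_1\|_{L^2(H^2)}^2+\|\phi_2\|_{L^2(H^2)}^2+\|u\|_{L^2(H^2)}^2+\|\partial_tu\|_{L^2(H^2)}^2+\|\partial_{tt}u\|_{L^2(L^2)}^2+\|v\|_{L^2(L^2;H_r^2)}^2+\|\partial_t v\|_{L^2(L^2;H_r^2)}^2+\|\partial_{tt}v\|_{L^2(L^2;L_r^2)}^2+\|v_s\|_{L^2(H^1)}^2+\|J\|_{L^2(H^1)}^2\bigr).
\end{equation}
Applying the discrete Gronwall inequality, valid once $\Delta t\le \Delta t_0$ with $1-C\Delta t_0>0$, absorbs the last sum and produces the factor $\exp(C(\underline{k_1},\underline{k_2})t_n)$. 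Finally, combining the resulting bound on $\|\theta_u^n\|^2+\|\theta_v^n\|^2$ with the spatial estimates (\ref{uv2_1}) for $\rho_u^n$ and $\rho_v^n$ and with (\ref{uv3}) for $v^n-I_0^x v^n$ yields (\ref{fdm8}).

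The main obstacle is the same coupling issue encountered in the semidiscrete argument: the bound on $\|\theta_v^n\|$ supplied by Lemma \ref{lem6} (via the potential estimate and Lemma \ref{lem4}) contains $\epsilon\|\partial_r\theta_v^n\|^2$, which must be matched carefully against the coercivity constant $\underline{k_2}$ so that, after the sum over $n$, every gradient-type term appearing on the right can be absorbed by a strictly positive multiple of itself on the left. Verifying that the composite $\epsilon$-threshold chosen for the semidiscrete problem still works uniformly in $n$ after summation, and that the discrete Gronwall constant stays bounded independently of $\Delta t$, is the step that requires the most bookkeeping; the time-consistency analysis is otherwise routine.
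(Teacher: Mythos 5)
Your proposal follows the same route as the paper's own proof: the decomposition (\ref{fdm7}), the pointwise-in-$t$ reuse of Theorem \ref{Theorem 1} and Lemmas \ref{lem4}--\ref{lem6} for the potentials and $J$, the backward-Euler testing identity giving $\widetilde{\partial}_t\|\theta^n\|^2$, the two new consistency residuals (the temporal truncation term $\widetilde{\partial}_t u^n-\partial_t u^n$ bounded via Taylor with integral remainder, and the projection increment $\widetilde{\partial}_t\rho^n$), absorption of the $\epsilon\|\partial_r\theta_v^n\|^2$ terms against $\underline{k_2}$, and a discrete Gronwall step (the paper writes it as an explicit geometric iteration with $(1+C\Delta t)\le e^{C\Delta t}$, which is equivalent). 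The only cosmetic difference is in how $C(\Gamma)$ is packaged: the paper introduces it as a spatial $L^\infty$-in-time constant absorbing the $h^2$ and $\Delta r^2$ contributions and keeps the time-derivative integrals inside the separate quantities $F_u^n,F_v^n$, whereas you fold all of them into a single $L^2$-in-time constant; both choices lead to the same final estimate under the regularity assumptions used.
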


\begin{proof}
Since $\rho _{u}^{n}$, $v^{n}-I_{0}^{x}v^{n}+I_{0}^{x}\rho _{v}^{n}$ and $%
\left\Vert \Phi ^{j}-\Phi _{h}^{j}\right\Vert _{V}^{2}$ are
estimated as in Section 4.3, we shall address our attention to the
estimations for $\theta _{u}^{n}$ and $\theta _{v}^{n}$. We start
with the calculation for $\theta
_{v}^{n}$. For this purpose, we recast (\ref{uv4}) for $t=t_{n}$ as%
\begin{equation}
\begin{array}{c}
\displaystyle\int_{D_{2}}\int_{0}^{R_{s}(x)}\left(\widetilde{\partial}_{t}I_{0}^{x}{v}%
^{n}w+k_{2}\frac{\partial I_{0}^{x}v^{n}}{\partial r}\frac{\partial w}{%
\partial r}\right) r^{2}drdx \\
\\
=-\displaystyle%
\int_{D_{2}}I_{0}^{x}(R_{s}^{2}(x)a_{2}^{-1}(x)F^{-1}J^{n}w(x;R_{s}(x)))dx \\
\\
+\displaystyle\int_{D_{2}}\int_{0}^{R_{s}(x)}\left(\widetilde{\partial}_{t}I_{0}^{x}{v}%
^{n}w-\frac{\partial I_{0}^{x}v^{n}}{\partial t}w\right) r^{2}drdx.%
\end{array}
\label{eef2}
\end{equation}%
Setting, as we did in Section 4.3, $v_{h\Delta
r}^{n}=I_{0}^{x}v^{n}-\left(
I_{0}^{x}\rho _{v}^{n}+\theta _{v}^{n}\right) $ in (\ref{fdm2}) and using (%
\ref{pre8}) and (\ref{eef2} ) yields for $t=t_{n}$
\begin{equation*}
\begin{array}{r}
\displaystyle\int_{D_{2}}\int_{0}^{R_{s}(x)}\left(\widetilde{\partial}_{t}\theta
_{v}^{n}w_{h\Delta r}+k_{2}\frac{\partial \theta _{v}^{n}}{\partial
r}\frac{
\partial w_{h\Delta r}}{\partial r}\right) r^{2}drdx=\lambda
\int_{D_{2}}\int_{0}^{R_{s}(x)}I_{0}^{x}\rho _{v}^{n}w_{h\Delta r}r^{2}drdx
\\
\\
-\displaystyle\int_{D_{2}}\int_{0}^{R_{s}(x)}\widetilde{\partial}_{t}I_{0}^{x}\rho
_{v}^{n}w_{h\Delta r}r^{2}drdx \\
\\
-\displaystyle\int_{D_{2}}I_{h}^{0}(R_{s}^{2}(x)a_{2}^{-1}(x)F^{-1}(J^{n}(x)-J_{h}^{n}(x))w_{hs}(x))dx
\\
\\
+\displaystyle\int_{D_{2}}\left(
R_{s}^{2}(x)a_{2}^{-1}(x)F^{-1}J_{h}^{n}(x)-I_{h}^{0}(R_{s}^{2}(x)a_{2}^{-1}(x)F^{-1}J_{h}^{n}(x))\right)
w_{hs}(x)dx,
\\
\\
-\displaystyle\int_{D_{2}}\int_{0}^{R_{s}(x)}\left(\widetilde{\partial}_{t}I_{0}^{x}v^{n}-\frac{\partial
I_{0}^{x}v^{n}}{\partial t}\right)
r^{2}drdx%
\end{array}%
\end{equation*}%
Letting $w_{h\Delta r}=\theta _{v}^{n}$, $w_{hs}=\theta _{vs}^{n}$,
and noting that for $a$ and $b$ real numbers,
$2(a-b)b=a^{2}-b^{2}\textcolor{blue}{-}(a-b)^{2}$,
it follows that%
\begin{equation}
\begin{array}{l}
\displaystyle\frac{1}{2}\widetilde{\partial}_{t}\left\Vert \theta
_{v}^{n}\right\Vert _{L^{2}(D_{2},L_{r}^{2}(0,R_{s}(\cdot
)))}^{2}+\displaystyle \underline{k_{2}}\left\Vert \frac{\partial
\theta _{v}^{n}}{\partial r}\right\Vert
_{L^{2}(D_{2},L_{r}^{2}(0,R_{s}(\cdot )))}^{2} \\
\\
\leq \lambda \left\Vert I_{0}^{x}\rho _{v}^{n}\right\Vert
_{L^{2}(D_{2},L_{r}^{2}(0,R_{s}(\cdot )))}\left\Vert \theta
_{v}^{n}\right\Vert _{L^{2}(D_{2},L_{r}^{2}(0,R_{s}(\cdot )))} \\
\\
+\displaystyle\left\Vert \widetilde{\partial}_{t}I_{0}^{x}\rho
_{v}^{n}\right\Vert _{L^{2}(D_{2},L_{r}^{2}(0,R_{s}(\cdot
)))}\left\Vert \theta
_{v}^{n}\right\Vert _{L^{2}(D_{2},L_{r}^{2}(0,R_{s}(\cdot )))} \\
\\
+C\left\Vert I_{h}^{0}\left( (J^{n}-J_{h}^{n})\theta _{vs}^{n}\right)
\right\Vert _{L^{1}(D_{2})}+C\left\Vert (J_{h}^{n}-I_{h}^{0}J_{h}^{n})\theta
_{vs}^{n}\right\Vert _{L^{1}(D_{2})} \\
\\
+\left\Vert \widetilde{\partial}_{t}I_{0}^{x}v^{n}-\frac{\partial I_{0}^{x}v^{n}}{%
\partial {t}}\right\Vert _{L^{2}(D_{2},L_{r}^{2}(0,R_{s}(\cdot
)))}\left\Vert \theta _{v}^{n}\right\Vert
_{L^{2}(D_{2},L_{r}^{2}(0,R_{s}(\cdot )))}\equiv \sum_{i=1}^{5}R_{i}^{n}.%
\end{array}
\label{eef4}
\end{equation}%
We bound the right hand side of this inequality applying the same arguments
as in (\ref{uv7}). Thus, we have that%
\begin{equation*}
R_{1}^{n}\leq C\Delta r^{4}\left\Vert v^{n}\right\Vert
_{L^{2}(D_{2},H_{r}^{2}(0,R_{s}(\cdot )))}^{2}+C\left\Vert \theta
_{v}^{n}\right\Vert _{L^{2}(D_{2},L_{r}^{2}(0,R_{s}(\cdot )))}^{2}
\end{equation*}

\begin{equation*}
R_{2}^{n}\leq \frac{C\Delta r^{4}}{\Delta t}\int_{t_{n-1}}^{t_{n}}\left\Vert
\frac{\partial v}{\partial t}\right\Vert
_{L^{2}(D_{2},H_{r}^{2}(0,R_{s}(\cdot )))}^{2}dt+C\left\Vert \theta
_{v}^{n}\right\Vert _{L^{2}(D_{2},L_{r}^{2}(0,R_{s}(\cdot )))}^{2}.
\end{equation*}

\begin{equation*}
R_{3}^{n}\leq C\left\Vert J^{n}-J_{h}^{n}\right\Vert
_{L^{2}(D_{2})}^{2}+C(\epsilon )\left\Vert \theta _{v}^{n}\right\Vert
_{L^{2}(D_{2},L_{r}^{2}(0,R_{s}(\cdot )))}^{2}+\epsilon \left\Vert \frac{%
\partial \theta _{v}^{n}}{\partial r}\right\Vert
_{L^{2}(D_{2},L_{r}^{2}(0,R_{s}(\cdot )))}^{2}.
\end{equation*}

\begin{equation*}
\begin{array}{r}
R_{4}^{n}\leq C\left\Vert J^{n}-J_{h}^{n}\right\Vert
_{L^{2}(D_{2})}^{2}+C(\epsilon )\left\Vert \theta _{v}^{n}\right\Vert
_{L^{2}(D_{2},L_{r}^{2}(0,R_{s}(\cdot )))}^{2}+\epsilon \displaystyle%
\left\vert \frac{\partial \theta _{v}^{n}}{\partial r}\right\vert
_{L^{2}(D_{2},L_{r}^{2}(0,R_{s}(\cdot )))}^{2} \\
\\
+Ch^{2}\displaystyle\left\Vert \frac{\partial J^{n}}{\partial x}\right\Vert
_{L^{2}(D_{2})}^{2}.%
\end{array}%
\end{equation*}%
In both $R_{3}^{n}$ and $R_{4}^{n}$ the term $C\left\Vert
J^{n}-J_{h}^{n}\right\Vert _{L^{2}(D_{2})}^{2}$ is bounded by Lemma \ref%
{lem6} for $t=t_{n}$; thus, using the notation $\Gamma =(u,v,v_{s},\phi
_{1},\phi _{2})$, we can set that%
\begin{equation}
\begin{array}{l}
C\left\Vert J^{n}-J_{h}^{n}\right\Vert _{L^{2}(D_{2})}^{2}\leq
C(\Gamma )(h^{2}+\Delta r^{2})+C\left\Vert \theta
_{u}^{n}\right\Vert
_{L^{2}(D_{1})}^{2} \\
\\
+C(\epsilon )\left\Vert \theta _{v}^{n}\right\Vert
_{L^{2}(D_{2},L_{r}^{2}(0,R_{s}(\cdot )))}^{2}+\epsilon \displaystyle%
\left\Vert \frac{\partial \theta _{v}^{n}}{\partial r}\right\Vert
_{L^{2}(D_{2},L_{r}^{2}(0,R_{s}(\cdot )))}^{2},%
\end{array}
\label{eef5}
\end{equation}%
where the constant $C(\Gamma )$ is given as%
\begin{equation} \label{cgamma}
\begin{array}{c}
C(\Gamma )=C\max \left( \left\Vert \phi _{1}\right\Vert _{L^{\infty }(0,T_{%
\mathrm{end}};H^{2}(D_{1}))},\left\Vert \phi _{2}\right\Vert
_{L^{\infty }(0,T_{\mathrm{end}};H^{2}(D_{2}))},\left\Vert
v_{s}\right\Vert _{L^{\infty
}(0,T_{\mathrm{end}};H^{1}(D_{2}))},\right. \\
\\
\left. \left\Vert u\right\Vert _{L^{\infty }(0,T_{\mathrm{end}%
};H^{2}(D_{1}))},\left\Vert v\right\Vert _{L^{\infty }(0,T_{\mathrm{end}%
};L^{2}(D_{2};H_{r}^{2}(0,R_{s}(\cdot))))}\right) .%
\end{array}%
\end{equation}%
Hence, we can write%
\begin{equation*}
\begin{array}{r}
R_{3}^{n}+R_{4}^{n}\leq C(\Gamma )(h^{2}+\Delta r^{2})+Ch^{2}\displaystyle%
\left\Vert \frac{\partial J^{n}}{\partial x}\right\Vert
_{L^{2}(D_{2})}^{2}+C\left\Vert \theta _{u}^{n}\right\Vert
_{L^{2}(D_{1})}^{2} \\
\\
+C(\epsilon )\left\Vert \theta _{v}(t)\right\Vert
_{L^{2}(D_{2},L_{r}^{2}(0,R_{s}(\cdot )))}^{2}+\epsilon \displaystyle%
\left\Vert \frac{\partial \theta _{v}(t)}{\partial r}\right\Vert
_{L^{2}(D_{2},L_{r}^{2}(0,R_{s}(\cdot )))}^{2}.%
\end{array}%
\end{equation*}%
To estimate the term $R_{5}^{n}$, we notice that by approximation
theory
\begin{equation*}
\left\Vert \widetilde{\partial}_{t}I_{0}^{x}v^{n}-\frac{\partial I_{0}^{x}v^{n}}{%
\partial {t}}\right\Vert _{L^{2}(D_{2},L_{r}^{2}(0,R_{s}(\cdot )))}\leq
C\left\Vert \widetilde{\partial}_{t}v^{n}-\frac{\partial v^{n}}{\partial {t}}%
\right\Vert _{L^{2}(D_{2},L_{r}^{2}(0,R_{s}(\cdot )))}
\end{equation*}%
and%
\begin{equation*}
\widetilde{\partial}_{t}v^{n}-\frac{\partial v^{n}}{\partial {t}}=\frac{-1}{\Delta t}%
\int_{t_{n-1}}^{t_{n}}(t-t_{n-1})\frac{\partial ^{2}v}{\partial
t^{2}}dt,
\end{equation*}%
so,%
\begin{equation*}
\left\Vert \widetilde{\partial}_{t}I_{0}^{x}v^{n}-\frac{\partial I_{0}^{x}v^{n}}{%
\partial {t}}\right\Vert _{L^{2}(D_{2},L_{r}^{2}(0,R_{s}(\cdot )))}\leq
C\left( \Delta t\int_{t_{n-1}}^{t_{n}}\left\Vert \frac{\partial ^{2}v}{%
\partial {t}^{2}}\right\Vert _{L^{2}(D_{2},L_{r}^{2}(0,R_{s}(\cdot
)))}^{2}dt\right) ^{1/2}.
\end{equation*}%
Applying Young inequality yields%
\begin{equation*}
R_{5}^{n}\leq C\Delta t\int_{t_{n-1}}^{t_{n}}\left\Vert \frac{\partial ^{2}v%
}{\partial {t}^{2}}\right\Vert _{L^{2}(D_{2},L_{r}^{2}(0,R_{s}(\cdot
)))}^{2}dt+C\left\Vert \theta _{v}^{n}\right\Vert
_{L^{2}(D_{2},L_{r}^{2}(0,R_{s}(\cdot )))}^{2}
\end{equation*}%
Collecting these bounds in (\ref{eef4}) and letting $\epsilon
=\underline{k_{2}}/2$
yields%
\begin{equation}
\begin{array}{r}
\displaystyle\left\Vert \theta _{v}^{n}\right\Vert
_{L^{2}(D_{2},L_{r}^{2}(0,R_{s}(\cdot )))}^{2}+\displaystyle\Delta
t\underline{k_{2}}\left\Vert \frac{\partial \theta
_{v}^{n}}{\partial r}\right\Vert
_{L^{2}(D_{2},L_{r}^{2}(0,R_{s}(\cdot )))}^{2}\leq
\displaystyle\left\Vert \theta _{v}^{n-1}\right\Vert
_{L^{2}(D_{2},L_{r}^{2}(0,R_{s}(\cdot
)))}^{2}+F_{v}^{n} \\
\\
+C(\underline{k_{2}})\Delta t\left( \left\Vert \theta
_{v}^{n}\right\Vert _{L^{2}(D_{2},L_{r}^{2}(0,R_{s}(\cdot
)))}^{2}+\left\Vert \theta
_{u}^{n}\right\Vert _{L^{2}(D_{1})}^{2}\right) ,%
\end{array}
\label{eef6.0}
\end{equation}%
where%
\begin{equation}
\begin{array}{c}
F_{v}^{n}=\Delta t\left( C(\Gamma )(h^{2}+\Delta r^{2})+Ch^{2}\displaystyle%
\left\Vert \frac{\partial J^{n}}{\partial x}\right\Vert
_{L^{2}(D_{2})}^{2}+C\Delta r^{4}\left\Vert v^{n}\right\Vert
_{L^{2}(D_{2},H_{r}^{2}(0,R_{s}(\cdot )))}^{2}\right) \\
\\
+C\Delta r^{4}\displaystyle\int_{t_{n-1}}^{t_{n}}\left\Vert \frac{\partial v%
}{\partial t}\right\Vert _{L^{2}(D_{2},H_{r}^{2}(0,R_{s}(\cdot
)))}^{2}dt+C\Delta t^{2}\displaystyle\int_{t_{n-1}}^{t_{n}}\left\Vert \frac{%
\partial ^{2}v}{\partial {t}^{2}}\right\Vert
_{L^{2}(D_{2},L_{r}^{2}(0,R_{s}(\cdot )))}^{2}dt.%
\end{array}
\label{eef6.1}
\end{equation}%

To calculate an estimate for $\theta _{u}^{n}$, we observe that
subtracting (\ref{fdm1}) from (\ref{w1}) and setting $\theta
_{u}^{n}=e_{u}^{n}-\rho
_{u}^{n}$ it follows that%
\begin{equation*}
\begin{array}{r}
\displaystyle\int_{D_{1}}\widetilde{\partial}_{t}\theta _{u}^{n}w_{h}dx+\displaystyle\int_{D_{1}}k_{1}%
\frac{d\theta _{u}^{n}}{dx}\frac{dw_{h}}{dx}dx=\lambda
\int_{D_{1}}\rho _{u}^{n}w_{h}dx-\int_{D_{1}}\partial \rho
_{u}^{n}w_{h}dx
\\
\\
+\displaystyle\int_{D_{1}}a_{1}\left( J^{n}-J_{h}^{n}\right) w_{h}dx+%
\displaystyle\int_{D_{1}}\left( \widetilde{\partial}_{t}u^{n}-\frac{\partial u^{n}}{%
\partial t}\right) w_{h}dx.%
\end{array}%
\end{equation*}%
Letting $w_{h}=\theta _{u}^{n}$ yields%
\begin{equation*}
\begin{array}{r}
\displaystyle\frac{1}{2}\widetilde{\partial}_{t}\left\Vert \theta
_{u}^{n}\right\Vert
_{L^{2}(D_{1})}^{2}+\underline{k_{1}}\displaystyle\left\Vert
\frac{d\theta
_{u}^{n}}{dx}\right\Vert _{L^{2}(D_{1})}^{2}\leq C\displaystyle%
\left( \left\Vert \rho _{u}^{n}\right\Vert _{L^{2}(D_{1})}^{2}+\frac{1}{%
\Delta t}\int_{t_{n-1}}^{t_{n}}\left\Vert \frac{\partial \rho _{u}}{\partial
t}\right\Vert _{L^{2}(D_{1})}^{2}dt\right) \\
\\
+C\Delta t\displaystyle\int_{t_{n-1}}^{t_{n}}\left\Vert \frac{\partial ^{2}u%
}{\partial ^{2}t}\right\Vert _{L^{2}(D_{1})}^{2}dt+C\left\Vert
J^{n}-J_{h}^{n}\right\Vert _{L^{2}(D_{1})}^{2}+C\left\Vert \theta
_{u}^{n}\right\Vert _{L^{2}(D_{1})}^{2}.%
\end{array}%
\end{equation*}%
Then by virtue of (\ref{pre5}) and Lemma \ref{lem6} it follows that%
\begin{equation} \label{eef6.01}
\begin{array}{l}
\left\Vert \theta _{u}^{n}(t)\right\Vert _{L^{2}(D_{1})}^{2}+\Delta
t\underline{k_{1}}
\displaystyle\left\Vert \frac{d\theta _{u}^{n}}{dx}%
\right\Vert _{L^{2}(D_{1})}^{2}\leq \left\Vert \theta
_{u}^{n-1}(t)\right\Vert _{L^{2}(D_{1})}^{2}+F_{u}^{n} \\
\\
+C(\underline{k_{1}},\underline{k_{2}})\Delta t\left( \left\Vert
\theta _{v}^{n}\right\Vert _{L^{2}(D_{2},L_{r}^{2}(0,R_{s}(\cdot
)))}^{2}+\left\Vert \theta _{u}^{n}\right\Vert
_{L^{2}(D_{1})}^{2}\right)+\displaystyle\frac{\underline{k_{2}}}{2}\left\Vert
\frac{\partial {\theta_{v}^{n}}}{\partial r}\right\Vert
_{L^{2}(D_{2},L_{r}^{2}(0,R_{s}(\cdot )))}^{2}
\end{array}%
\end{equation}%
where%
\begin{equation}
F_{u}^{n}=\Delta tC(\Gamma )(h^{2}+\Delta r^{2})+Ch^{4}\displaystyle%
\int_{t_{n-1}}^{t_{n}}\left\Vert \frac{\partial u}{\partial t}\right\Vert
_{H^{2}(D_{1})}^{2}dt+C\Delta t^{2}\displaystyle\int_{t_{n-1}}^{t_{n}}\left%
\Vert \frac{\partial ^{2}u}{\partial ^{2}t}\right\Vert _{L^{2}(D_{1})}^{2}dt
\label{eef7}
\end{equation}

It remains to estimate $\left\Vert \Phi ^{n}-\Phi _{h}^{n}\right\Vert _{V}$.
Returning to the proof of Theorem \ref{Theorem 2} we have that for $t=t_{n}$%
\begin{equation}
\left\Vert \Phi ^{n}-\Phi _{h}^{n}\right\Vert _{V}^{2}\leq C(\Gamma
)h^{2}+C(\epsilon )\left( \left\Vert \theta _{u}^{n}\right\Vert
_{L^{2}(D_{1})}^{2}+\left\Vert \theta _{v}^{n}\right\Vert
_{L^{2}(D_{2},L_{r}^{2}(0,R_{s}(\cdot )))}^{2}\right) +\epsilon \displaystyle%
\left\Vert \frac{\partial \theta _{v}^{n}}{\partial r}\right\Vert
_{L^{2}(D_{2},L_{r}^{2}(0,R_{s}(\cdot )))}^{2}.  \label{eef8}
\end{equation}%
Thus, setting $\epsilon=\underline{k_{2}}/2$ in (\ref{eef8}) and
adding (\ref{eef6.0}), (\ref{eef6.01}) and (\ref{eef8}) we obtain
that
\begin{equation*}
\begin{array}{l}
\left( 1-C(\underline{k_{1}},\underline{k_{2}})\Delta t\right)
\left( \left\Vert \theta _{u}^{n}\right\Vert
_{L^{2}(D_{1})}^{2}+\displaystyle\left\Vert \theta
_{v}^{n}\right\Vert _{L^{2}(D_{2},L_{r}^{2}(0,R_{s}(\cdot
)))}^{2}\right) +\Delta t\left\Vert \Phi ^{n}-\Phi
_{h}^{n}\right\Vert _{V}^{2}\leq
F_{v}^{n}+F_{u}^{n} \\
\\
+\left\Vert \theta _{u}^{n-1}\right\Vert _{L^{2}(D_{1})}^{2}+\displaystyle%
\left\Vert \theta _{v}^{n-1}\right\Vert
_{L^{2}(D_{2},L_{r}^{2}(0,R_{s}(\cdot )))}^{2}%
\end{array}%
\end{equation*}%
For $\Delta t$ small%
\begin{equation*}
\begin{array}{r}
\left\Vert \theta _{u}^{n}\right\Vert _{L^{2}(D_{1})}^{2}+\displaystyle%
\left\Vert \theta _{v}^{n}\right\Vert _{L^{2}(D_{2},L_{r}^{2}(0,R_{s}(\cdot
)))}^{2}+\Delta t\left\Vert \Phi ^{n}-\Phi _{h}^{n}\right\Vert _{V}^{2}\leq
C\left( F_{v}^{n}+F_{u}^{n}\right) \\
\\
\left( 1+C(\underline{k_{1}},\underline{k_{2}})\Delta t\right)
\left( \left\Vert \theta _{u}^{n-1}\right\Vert
_{L^{2}(D_{1})}^{2}+\displaystyle\left\Vert \theta
_{v}^{n-1}\right\Vert _{L^{2}(D_{2},L_{r}^{2}(0,R_{s}(\cdot )))}^{2}\right) .%
\end{array}%
\end{equation*}%
Hence, by repeated application and taking $\left\Vert \theta
_{u}^{0}\right\Vert _{L^{2}(D_{1})}^{2}+\left\Vert \theta
_{v}^{0}\right\Vert _{L^{2}(D_{2},L_{r}^{2}(0,R_{s}(\cdot
)))}^{2}=0$, it results that
\begin{equation*}
\left\Vert \theta _{u}^{n}\right\Vert _{L^{2}(D_{1})}^{2}+\left\Vert \theta
_{v}^{n}\right\Vert _{L^{2}(D_{2},L_{r}^{2}(0,R_{s}(\cdot )))}^{2}+\Delta
t\sum_{j=1}^{n}\left\Vert \Phi ^{j}-\Phi _{h}^{j}\right\Vert _{V}^{2}\leq C%
\displaystyle\sum_{j=1}^{n}\frac{F_{v}^{j}+F_{u}^{j}}{\left(
1+C(\underline{k_{1}},\underline{k_{2}}))\Delta t\right) ^{j-n}}.
\end{equation*}%
Noting that $\left( 1+C(k_{1},k_{2})\Delta t\right) \leq
e^{C(k_{1},k_{2})\Delta t}$, then we can write%
\begin{equation*}
\begin{array}{c}
\displaystyle\sum_{j=1}^{n}\frac{F_{v}^{j}+F_{u}^{j}}{\left(
1+C(\underline{k_{1}},\underline{k_{2}})\Delta t\right) ^{j-n}}\leq
e^{C(\underline{k_{1}},\underline{k_{2}})t_{n}}\sum_{j=1}^{n}F_{v}^{j}+F_{u}^{j}\ \ \left( \mathrm{%
by\ (\ref{eef6.1})\ and\ (\ref{eef7})}\right) \\
\\
\ \ \ \ \ \ \ \ \ \ \ \ \ \ \ \ \ \ \ \ \ \leq C(\Gamma
)e^{C(\underline{k_{1}},\underline{k_{2}})t_{n}}\left( h^{2}+\Delta r^{2}+\Delta t^{2}\right) .%
\end{array}%
\end{equation*}%
This completes the proof
\end{proof}

\section{Acknowledgements}

This research has been partially funded by grant PGC-2018-097565-B100 of
Ministerio de Ciencia, Innovaci\'{o}n y Universidades of Spain.


\begin{thebibliography}{99}
\bibitem{BG} {R. Bermejo, P. Gal\'{a}n del Sastre, An implicit-explicit
Runge-Kutta-Chebyshev finite element method for the nonlinear Lithium-ion
battery equations, Applied Mathematics and Computation 36 (2019) 398-420.}

\bibitem{Ci} {P. G. Ciarlet, \textit{The Finite Element Method for Elliptic Prolems},
North Holland, Amsterdand 1978.}

\bibitem{Doyle} {M. Doyle, T. F. Fuller, J. Newman, Modeling of
galvanostatic charge and discharge of the lithium/polymer/insertion cell, J.
Electrochem. Soc. 140 (1993) 1526-1533.}

\bibitem{Diaz} J.I.D\'{\i}az, D. G\'{o}mez-Castro, A.M. Ramos, On the
well-posedness of a multiscale mathematical model for Lithium-ion batteries,
Adv. Nonlinear Anal. 8 (2019) 1132--1157.

\bibitem{ET} {K. Eriksson, V. Thom\'{e}e, Galerkin methods for singular
value problems in one space dimension, Math. Comp. 42 (1984) 345-367.}

\bibitem{Farkas} {Z. Farkas, I. Farag\'{o}, \'{A}. Kriston, A. Pfrang,
Improvement of the accuracy of multi-scale models of Li-ion batteries by
applying operating splitting techniques, J. Computational and Applied
Mathjematics 310 (2017) 59-79.}

\bibitem{GT} {D. Gilbert, N. Trudinger, \textit{Elliptic Partial Differential Equations
of Second Order}, Springer-Verlag, Heidelberg 1977.}

\bibitem{GR} {V. Girault, P.-A. Raviart, \textit{Finite Element Methods for
Navier-Stokes Equations}, Springer-Verlag, Heidelberg 1986. }

\bibitem{KZ} {S. Kosch , J. Sturm , J. Schuster , G. Mulder , E. Ayerbe , A.
Jossem , A computationally efficient multi-scale model for lithium-ion
cells, J. Electrochem. Soc. 165 (2018) A2374--A2388.}

\bibitem{Kro} {C. Kr$\ddot{\text{o}}$ner, A mathematical exploration of a
PDE system for lithium-ion batteries, Ph.D. Thesis in Mathematics University
of California, Berkeley (2016) 63 pp.}

\bibitem{ML} {S. Mazunder, J. Lu, Faster-than-real-time simulation of
Lithium-ion batteries with full spatial and temperal resolution.
International Journal of Electrochemistry, 268747 (2013) 10 pp. ID 268747.}

\bibitem{dual} {J. Newman, Dualfoil 5.1 Newman research group webpage.
http://www.cchem.berkeley.edu/jsngrp/.}

\bibitem{NT} {J. Newman, K.E. Thomas-Alyea, \textit{Electrochemical Systems}, 3rd ed.
Wiley, Hoboken, NJ. 2004.}

\bibitem{NOR} P.W.C. Northrop, M. Pathak, D. Rife, S. De, S.
Santhanagopalan, V.R. Subramanian, Efficient simulation and model
reformulation of two-dimensional electrochemical thermal behavior of
Lithium-ion batteries. J. Electrochem.. Soc., 162 (2015) A940-A951.

\bibitem{Plet} {G.L. Plett, \textit{Battery Management System} vol.1, Artech House, Boston.
2015.}

\bibitem{SE} {R. Schreiber, S. Eisenstat, Finite element methods for
spherically symmetric elliptic equations. SIAM J. Numer. Anal., 18 (1981)
546-558.}

\bibitem{SW} {K. A. Smith, C-Y, Wang, Solid-state diffusion limitations on
pulse operation of a Lithium-ion cell for hybrid electric vehicles. Journal
of Power Sources, 161 (2006) 628-639.}

\bibitem{Zeid} {E. Zeidler, \textit{Nonlinear Functional Analysis and Its Applications IIB},
Springer-Verlag, New York, 2013. }

\bibitem{WXZ} J. Wu, J. Xu, H.Zou,\ On the well-posedness of a mathematical
model for lithium-ion battery systems. Methods and Applications of
Analysis, 13 (2006) 275-298.
\end{thebibliography}
\end{document}